\theoremstyle{plain}
\newtheorem{thm}{Theorem}[section]
\newtheorem{lem}{Lemma}[section]
\newtheorem{cor}{Corollary}[section]
\newcommand\numberthis{\addtocounter{equation}{1}\tag{\theequation}}
\begin{document}

\begin{frontmatter}
\title{Small gaps of circular $\beta$-ensemble}%\thanksref{T1}}
\runtitle{Small gaps}
%\thankstext{T1}{Footnote to the title with the ``thankstext'' command.}

\begin{aug}
\author{\fnms{Renjie} \snm{Feng}\thanksref{m1}\ead[label=e1]{renjiefeng.math@gmail.com}},
%\author{\fnms{Gang} \snm{Tian}\thanksref{m1}\ead[label=e2]{gtian@math.pku.edu.cn}}
\and
\author{\fnms{Dongyi} \snm{Wei}\thanksref{m2}\ead[label=e3]{jnwdyi@pku.edu.cn}}
%\ead[label=u1,url]{http://www.foo.com}}

%\thankstext{t1}{Some comment}
%\thankstext{t2}{First supporter of the project}
%\thankstext{t3}{Second supporter of the project}
\runauthor{Feng and Wei}

\affiliation{University of Science and Technology of China\thanksmark{m1}}
 \affiliation{Peking University\thanksmark{m2}}

\address{University of Science and Technology\\ of China, Hefei, China, 230026.\\
%\phantom{E-mail:\ }\printead*{e2}\\
\phantom{E-mail:\ }\printead*{e1}}

\address{Peking University, Beijing, China, 100871.\\
%\phantom{E-mail:\ }\printead*{e2}\\
\phantom{E-mail:\ }\printead*{e3}}

%\address{Address of the Third author\\
%Usually a few lines long\\
%Usually a few lines long\\
%\printead{e3}\\
%\printead{u1}}
\end{aug}

\begin{abstract}
In this article, we study the smallest gaps of the log-gas $\beta$-ensemble on the unit circle (C$\beta$E),  where $\beta$ is any positive integer.
The main result is that the smallest gaps, after being normalized by $n^{\frac {\beta+2}{\beta+1}}$, will converge in distribution  to a Poisson point process with some
explicit intensity. And thus one can derive the limiting density of the $k$-th smallest gap,  which is proportional to $x^{k(\beta+1)-1}e^{-x^{\beta+1}}$. In particular, the result applies
to the classical COE, CUE and CSE in random matrix theory.
The essential part of the proof is to derive several  identities and inequalities regarding the Selberg integral, which should have their own interest.
\end{abstract}

%\begin{keyword}[class=MSC]
%\kwd[Primary ]{60K35}
%\kwd{60K35}
%\kwd[; secondary ]{60K35}
%\kwd{}
%\end{keyword}
%
%\begin{keyword}
%\kwd{}
%\kwd{}
%\end{keyword}

\end{frontmatter}

\section{Introduction}

The extreme spacings  of random point processes are important quantities in
statistical physics. In random matrix theory,  the question regarding the smallest gaps  of CUE and GUE was considered by Vinson \cite{V}; by a different method, Soshnikov also investigated the smallest gaps for the determinantal
point processes on the real line with  translation invariant kernels \cite{So};  Soshnikov's  technique was adapted by   Ben Arous-Bourgade in \cite{BB} where they proved
that the smallest gaps of CUE and GUE, after being normalized by $n^{4/3}$, will tend to a Poisson point process and  the $k$-th smallest gap has the limiting density proportional to $x^{3k-1}e^{-x^{3}}$. Their results are further generalized by Figalli-Guionnet in \cite{FG}. The similar results are derived for random matrices with complex Ginibre,
Wishart and universal Unitary ensembles in \cite{SJ}.

Regarding the largest gaps,  the decay order $\sqrt{32\log n}/n$ of the largest gaps of CUE and GUE (in the bulk regime) was predicted by Vinson  in \cite{V} and proved by Ben Arous-Bourgade in \cite{BB}. The same decay order for the largest gaps of some invariant multimatrix Hermitian matrices was also derived by Figalli-Guionnet in \cite{FG}. Recently, the fluctuations of the largest gaps of CUE and GUE have been derived in \cite{FW2}, furthermore, it's proved that the largest gaps, after being normalized, will tend to a Poisson point process.

%Recently, in \cite{B, ds}, Bourgade and  Landon-Lopatto-Marcinek proved the universality for the extreme gaps in the bulk of the general Hermitian and symmetric Wigner
%matrices with assumptions.

 In this paper, we will derive the limitinig distribution of the smallest gaps of C$\beta$E where $\beta$ is any positive integer.  Our results confirm the (numerical) prediction in physics \cite{STKZ} and recover Ben Arous-Bourgade's results in the case of CUE (where $\beta=2$). But our proof is different and technical. One can not make use of the structure of the determinantal point processes any more (for example,
 when $\beta=1, 4$, they  are Pfaffian processes other than the determinantal point processes \cite{AGZ}),  and we have to start from the Selberg integral to get the estimates regarding the point correlation functions, where we  need to derive
 several asymptotic limits and inequalities (such as Lemma \ref{lem7} and Lemma \ref{lem9}) which should have their own interest in Selberg integral theory. The method developed in this paper is further adapted in  \cite{FTW} where we can derive the limiting distribution of the smallest gaps of GOE.

Recently, in \cite{B, ds},  Bourgade and  Landon-Lopatto-Marcinek further proved that our  results are universal for both small gaps and large gaps in the bulk of the general Hermitian and symmetric Wigner matrices with assumptions.

 % and proved the similar results.
 %We also conjecture that the main results hold for any $\beta>0$, but our method is only applied for the integer $\beta$, we postpone this
 %problem for further investigation.
%There are  also many other interesting models one may study regarding the extremal spacings although they seem much harder to solve,  such as
%the tensor product of $2\times 2$ unitary matrices of qubit system in quantum information theory (see \cite{STKZ} for the numerical results) and
%the product of random Wigner matrices in random matrix theory.

\subsection{Main results}
For circular $ \beta$-ensemble with  $\beta>0$, the density of the eigenangles $ \theta_j\in[-\pi,\pi)$,$1\leq j\leq n$ with respect to the Lebesgue measure is\begin{align}\label{ji}&J(\theta_1,\cdots, \theta_n)=\frac{1}{C_{\beta,n}}\prod_{j<k}|e^{i\theta_j}-e^{i\theta_k}|^{\beta}
\end{align}with $ \beta=2$ corresponding to CUE and $ \beta=1$ for COE and $ \beta=4$ for CSE. The partition function
\begin{align*}&{C_{\beta,n}}:=\int_{-\pi}^{\pi}d\theta_1\cdots\int_{-\pi}^{\pi}d\theta_n
\prod_{j<k}|e^{i\theta_j}-e^{i\theta_k}|^{\beta}
\end{align*} is derived by the Selberg integral as \begin{align*}&{C_{\beta,n}}=(2\pi)^n\frac{\Gamma(1+\beta n/2)}{(\Gamma(1+\beta /2))^n}.
\end{align*}

 One interpretation of the density $J(\theta_1,\cdots, \theta_n)$ is as the Boltzmann factor for a classical gas at inverse temperature
$ \beta$ with potential energy$$-\sum_{1\leq j<k\leq n}\ln|e^{i\theta_j}-e^{i\theta_k}|.$$ Because of the pairwise logarithmic repulsion, such a classical
gas is referred to as a log-gas. This interpretation allows for a number of properties of correlations
and distributions to be anticipated using arguments based on macroscopic electrostatics \cite{For}.

We will need the following partition functions for the two-component log-gas where the system consists of $n_1$ particles with charge $q=1$ and $n_2$ particles with charge $q=2$, \begin{equation}\label{c1}{C_{\beta,n_1,n_2}}:=\int_{-\pi}^{\pi}d\theta_1\cdots\int_{-\pi}^{\pi}d\theta_{n_1+n_2}
\prod_{j<k}|e^{i\theta_j}-e^{i\theta_k}|^{q_jq_k\beta}\end{equation} and
\begin{equation}\label{c2}{C_{\beta,n_1,n_2}}(I):=\int_{(-\pi,\pi)^{n_1}\times I^{n_2}}d\theta_{1}\cdots d\theta_{n_1+n_2}
\prod_{j<k}|e^{i\theta_j}-e^{i\theta_k}|^{q_jq_k\beta},
\end{equation}where $q_j=1$ for $1\leq j\leq n_1$ and $q_j=2$ for $n_1+1\leq j\leq n_1+n_2.$

We also need the following partition function with respect to the two-component log-gas with $n_1$ particles with charge $q=1$ and one particle with charge $q=k$, \begin{align}\label{c3}{C_{\beta,n_1,(k)}}:=\int_{-\pi}^{\pi}d\theta_1\cdots\int_{-\pi}^{\pi}d\theta_{n_1+1}
\prod_{j<l}|e^{i\theta_j}-e^{i\theta_l}|^{q_jq_l\beta}
\end{align}with $q_j=1$ for $1\leq j\leq n_1$ and $q_{n_1+1}=k$, then we have $$C_{\beta,n_1,(2)}=C_{\beta,n_1,1} $$ and the following results.\begin{lem}\label{lem7}For $0<k\leq n,\ \beta\geq 1$, we have\begin{align*}&{C_{\beta,n-k,(k)}}\leq {C_{\beta,n}(n\beta)^{k(k-1)\beta/2}},
\end{align*} and \begin{align*}&\lim_{n\to+\infty}\frac{C_{\beta,n-2,1}}{C_{\beta,n}n^{\beta}}=A_{\beta},\ \ \ \ \lim_{n\to+\infty}\frac{C_{\beta,n-k,(k)}}{C_{\beta,n}n^{k(k-1)\beta/2}}=A_{\beta,k},
\end{align*}where\begin{align*}&
A_{\beta,k}=\frac{(2\pi)^{1-k}(\Gamma(\beta/2+1))^{k}}
{\Gamma(k\beta/2 +1)}\prod_{j=1}^{k-1}
\frac{\Gamma(j\beta/2 +1)}{\Gamma((k+j)\beta/2 +1)}(\beta/2)^{k(k-1)\beta/2}
\end{align*}and\begin{align*}&A_{\beta}=A_{\beta,2}=(2\pi)^{-1}\frac{(\beta /2)^{\beta}(\Gamma(\beta/2+1))^{3}}{\Gamma(3\beta/2 +1)\Gamma(\beta+1)}.
\end{align*}\end{lem}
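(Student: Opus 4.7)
The plan is to use Morris's circular Selberg constant-term identity to write $C_{\beta,n-k,(k)}$ in closed form and then extract both assertions from manipulations of the resulting ratio of Gamma functions. By the rotational invariance of the integrand in \eqref{c3}, fix the charge-$k$ particle at $\theta_{n-k+1}=0$ at the cost of a factor $2\pi$, reducing $C_{\beta,n-k,(k)}$ to $2\pi$ times
\[
\int_{(-\pi,\pi)^{n-k}}\prod_{j=1}^{n-k}|1-e^{i\theta_j}|^{k\beta}\prod_{1\le j<l\le n-k}|e^{i\theta_j}-e^{i\theta_l}|^\beta\, d\theta.
\]
The substitution $\theta_j\to\theta_j+\pi$ converts this to the Morris integral with parameters $a=b=k\beta/2$ and $\lambda=\beta/2$, whose evaluation is a known product of Gamma functions. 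Dividing by the stated formula for $C_{\beta,n}$ and cancelling the common Gamma factors whose index ranges overlap yields
\[
\frac{C_{\beta,n-k,(k)}}{C_{\beta,n}}=(2\pi)^{1-k}\Gamma(1+\beta/2)^{k}\prod_{j=1}^{k-1}\Gamma(1+j\beta/2)\cdot R_{n,k,\beta},
\]
where $R_{n,k,\beta}$ is a finite ratio of Gamma factors whose arguments grow linearly in $n$.

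For the limit, apply Stirling's formula $\Gamma(x+a)/\Gamma(x)=x^{a}(1+O(x^{-1}))$ uniformly to each large-argument Gamma in $R_{n,k,\beta}$, pairing numerator and denominator factors so that the $O(n)$-sized pairs telescope. The residual boundary Gammas combine into the factor $1/[\Gamma(1+k\beta/2)\prod_{j=1}^{k-1}\Gamma(1+(k+j)\beta/2)]$ (the remaining Gamma combinator in $A_{\beta,k}$), the net power of $n$ equals $k(k-1)\beta/2$, and the factor $(\beta/2)^{k(k-1)\beta/2}$ appears because each Stirling ratio contributes $(n\beta/2)^{\ast}$ rather than $n^{\ast}$. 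For the inequality, replace each Stirling estimate by its one-sided log-convexity counterpart $\Gamma(x+a)\le\Gamma(x)(x+a)^{a}$ (equivalent to $\psi(y)\le\log y$ for $y>0$) applied to the same pairing; the accumulated multiplicative slack, controlled under the hypothesis $\beta\ge 1$, produces $C_{\beta,n-k,(k)}\le C_{\beta,n}(n\beta)^{k(k-1)\beta/2}$, with the upgrade $(\beta/2)^{k(k-1)\beta/2}\to\beta^{k(k-1)\beta/2}$ absorbing the correction terms.

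The central technical obstacle is obtaining the uniform inequality with the \emph{exact} constant $(n\beta)^{k(k-1)\beta/2}$ rather than a merely $O(n^{k(k-1)\beta/2})$ estimate. The closed-form evaluation and the asymptotic limit are essentially routine once the Gamma-function structure is exposed, but the inequality requires choosing the right pairing of numerator and denominator Gamma factors in $R_{n,k,\beta}$ so that each individual log-convexity bound is tight, with $\beta\ge 1$ ensuring the accumulated multiplicative slack from log-convexity stays within $2^{k(k-1)\beta/2}$. A naive term-by-term application would produce a substantially worse constant.
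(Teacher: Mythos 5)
Your proposal follows essentially the same route as the paper: reduce $C_{\beta,n-k,(k)}$ to $2\pi$ times the Morris integral $M_{n-k}(k\beta/2,k\beta/2,\beta/2)$ by rotation, write the ratio to $C_{\beta,n}$ as an explicit product of Gamma factors, obtain the limit from Stirling-type two-sided bounds on $\Gamma(\beta(n+j)/2+1)/\Gamma(\beta(n-j)/2+1)$, and obtain the uniform inequality from one-sided log-convexity estimates (where $\beta\ge 1$ guarantees the relevant exponents $k\beta/2$ and $\beta j$ are at least $1$, and the passage from $\beta(n+j)/2$ to $n\beta$ supplies exactly the $2^{k(k-1)\beta/2}$ of slack you describe). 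The plan is correct and matches the paper's proof in all essentials.
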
%Notice that the constant $A_{\beta} $ has appeared in \cite{For} Exercise
%14.6.5, P697.

 Now we consider the following point process on $ \mathbb{R}^2$\begin{align}\label{chi}&\chi^{(n,\gamma)}=\sum_{i=1}^n\delta_{\left(n^{\gamma}(\theta_{(i+1)}-\theta_{(i)}),\theta_{(i)}\right)},\ \ \ \ \chi^{(n)}=\chi^{(n,\gamma)}\Big|_{\gamma=\frac{\beta+2}{\beta+1}},
\end{align}where $\gamma>0$, $\theta_{(i)}\ (1\leq i\leq n)$ is the increasing rearrangement of $\theta_{i}\ (1\leq i\leq n)$ and
$\theta_{(i+n)}=\theta_{(i)}+2\pi,$ i.e. the indexes are modulo $n.$  Regarding the point process $\chi^{(n)}$,  the main result is \begin{thm}\label{thm1}
For C$\beta$E where $ \beta$ is a positive integer,
% $k$ we have \begin{align*}&\liminf_{n\to+\infty}\frac{C_{\beta,n-2k,k}}{C_{\beta,n}n^{k\beta}}\geq A_{\beta}^k,\
%\limsup_{n\to+\infty}\frac{C_{\beta,n-2k,k}}{C_{\beta,n}n^{k\beta}}<+\infty.
%\end{align*}If $ \beta\geq 1$ satisfies\begin{align}\label{6}&\limsup_{n\to+\infty}\frac{C_{\beta,n-4,2}}{C_{\beta,n}n^{2\beta}}\leq
%A_{\beta}^2,\end{align}then for every positive integer $k$ we have \begin{align*}&\lim_{n\to+\infty}
%\frac{C_{\beta,n-2k,k}}{C_{\beta,n}n^{k\beta}}= A_{\beta}^k,
the process $\chi^{(n)} $  will converge to a Poisson point process $ \chi$ as $n\to+\infty$ with intensity\begin{align*}&\mathbb{E}\chi(A\times I)= \frac{A_{\beta}|I|}{2\pi}\int_Au^{\beta}du,
\end{align*}  where $ A\subset\mathbb{R}_{+}$ is any bounded Borel set, $I\subseteq(-\pi,\pi)$ and $|I|$ is the Lebesgue measure of $I$. In particular,  the result holds for COE, CUE and CSE  with $$A_1=\frac 1{24},\,\,A_2=\frac 1{24\pi},\,\,A_4=\frac 1{270\pi}$$ respectively.
\end{thm}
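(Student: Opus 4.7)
My plan is to establish Poisson convergence of $\chi^{(n)}$ by the method of factorial moments: verify that $\mathbb{E}[\chi^{(n)}(A\times I)^{(k)}]\to\lambda(A\times I)^k$ for every bounded rectangle, where $\lambda(A\times I)=\frac{A_\beta|I|}{2\pi}\int_A u^\beta du$, and then invoke Kallenberg's criterion to conclude convergence to the Poisson process with the stated intensity.

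For the first moment I will reduce the sum over consecutive pairs to an integral of the $2$-point correlation function $\rho_2^{(n)}$. The condition that $\theta_{(i+1)}$ be the immediate successor of $\theta_{(i)}$ forbids an extra eigenvalue in between; dropping this constraint introduces an error bounded by an integral of $\rho_3^{(n)}$ over a nested window, which using the $k=3$ case of Lemma \ref{lem7} can be seen to be of order $n^{-(2\beta+1)/(\beta+1)}=o(1)$. The main term is
$$\int_I d\theta\int_{s\in n^{-\gamma}A}\rho_2^{(n)}(\theta,\theta+s)\,ds.$$
By the definition of $\rho_2^{(n)}$, circular symmetry, and \eqref{c1}, for small $s$,
$$\rho_2^{(n)}(\theta,\theta+s)=\frac{n(n-1)}{2\pi C_{\beta,n}}\,|e^{i\theta}-e^{i(\theta+s)}|^\beta\,C_{\beta,n-2,1}\,(1+o(1)),$$
since the residual $(n-2)$-fold integral with two coalescing variables is exactly $C_{\beta,n-2,1}/(2\pi)$ (the extra $2\pi$ comes from integrating out the charge-$2$ position). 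Substituting $s=n^{-\gamma}u$, using $|e^{i\theta}-e^{i(\theta+s)}|^\beta=|s|^\beta(1+o(1))$, applying the first asymptotic of Lemma \ref{lem7}, and exploiting the exact cancellation $2+\beta-\gamma(\beta+1)=0$ yield $\mathbb{E}\chi^{(n)}(A\times I)\to\frac{A_\beta|I|}{2\pi}\int_A u^\beta du$.

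For $k\geq 2$, the $k$-th factorial moment reduces (modulo the same ``no point between'' corrections, again controlled by higher correlation functions and Lemma \ref{lem7}) to
$$\int_{I^k}d\theta_1\cdots d\theta_k\int_{(n^{-\gamma}A)^k}ds_1\cdots ds_k\,\rho_{2k}^{(n)}(\theta_1,\theta_1+s_1,\ldots,\theta_k,\theta_k+s_k).$$
The key input is an asymptotic factorization: when the base positions $\theta_l$ are macroscopically separated, the Vandermonde product over the $2k$ marked points splits into intra-cluster factors $\prod_l|s_l|^\beta$ and inter-cluster factors $\prod_{l<l'}|e^{i\theta_l}-e^{i\theta_{l'}}|^{4\beta}$, while the residual $(n-2k)$-fold integral is asymptotic to the multi-component Selberg integral with $k$ charge-$2$ particles pinned at the $\theta_l$. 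Iterating the mechanism of the first moment gives
$$\rho_{2k}^{(n)}(\theta_1,\theta_1+s_1,\ldots,\theta_k,\theta_k+s_k)=\prod_{l=1}^k\rho_2^{(n)}(\theta_l,\theta_l+s_l)\,(1+o(1)),$$
and integrating over well-separated configurations reproduces $\lambda(A\times I)^k$.

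The hardest part will be the uniform factorization of $\rho_{2k}^{(n)}$, especially near the diagonals $\theta_l\approx\theta_{l'}$ where two clusters merge and the decomposition above breaks down. I expect to handle this by a dominated-convergence argument, using as integrable envelopes the uniform Selberg-integral inequality in Lemma \ref{lem7} together with the sharper multi-component identities of Lemma \ref{lem9}, which together pin down the precise asymptotics of $C_{\beta,n-2k,k}/C_{\beta,n}$ and suppress the contributions from near-diagonal configurations. Once the factorial moments converge, Kallenberg's theorem delivers the Poisson limit, and the explicit constants $A_1=1/24$, $A_2=1/(24\pi)$, $A_4=1/(270\pi)$ follow by substituting $\beta=1,2,4$ into the formula for $A_\beta$ in Lemma \ref{lem7}.
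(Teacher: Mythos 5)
Your overall frame (reduce to factorial moments of a pair-counting process, correct for successive small gaps via a third correlation function, invoke Kallenberg) coincides with the paper's, and your first-moment computation is essentially the paper's $k=1$ case. The genuine gap is the step you yourself flag as "the hardest part": for $k\ge 2$ your argument rests on a uniform asymptotic factorization $\rho_{2k}^{(n)}(\theta_1,\theta_1+s_1,\ldots)=\prod_l\rho_2^{(n)}(\theta_l,\theta_l+s_l)(1+o(1))$ together with an integrable dominating envelope near the diagonals $\theta_l\approx\theta_{l'}$. For general $\beta$ this is precisely the ingredient that is not available, and the paper's entire architecture (the auxiliary process $\rho^{(k,n)}$, the comparison Lemma \ref{lem14}, Lemma \ref{lem8}, the uniform bound \eqref{18}, and the bootstrap proof of Lemma \ref{lem9}) is built expressly to avoid ever needing the asymptotics of the $2k$-point correlation functions. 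The envelopes you invoke do not deliver what you need: Lemma \ref{lem7} controls only a single fully coalesced cluster $C_{\beta,n-k,(k)}$, not configurations in which two charge-$2$ clusters approach each other; and Lemma \ref{lem9} is a statement about the quantity $C_{\beta,n-2k,k}(I)$ already integrated over $I^k$, so it cannot serve as a pointwise dominating function — moreover its proof is itself a second-moment bootstrap that consumes Lemma \ref{lem8}, \eqref{18} and the hard upper bound \eqref{19}, so using it to justify the correlation asymptotics that would re-derive Lemma \ref{lem8} is circular. The scale of the difficulty is visible already at $k=2$: merely obtaining a uniform upper bound and a $\limsup$ for the single two-cluster correlation $I_{n-4,2}(\beta;r_1,r_2)|e^{ir_1}-e^{ir_2}|^{4\beta}$ occupies the paper's Section \ref{19a} and requires the generalized hypergeometric representation \eqref{31} plus a contour-deformation analysis of a $2\beta$-dimensional integral; your plan needs the analogous full pointwise asymptotics for every $k$, which nothing in the cited lemmas provides.

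Two smaller remarks. First, if you are permitted to quote Lemma \ref{lem9} as a black box, then Theorem \ref{thm1} already follows from Lemmas \ref{lem4}, \ref{lem8}, \ref{lem9} and Kallenberg's criterion with no correlation-function analysis at all, so the factorization route is superfluous in that reading and unworkable as an independent argument in the other. Second, your identification of the residual $(n-2)$-fold integral with $C_{\beta,n-2,1}/(2\pi)$ is only asymptotic and needs the quantitative Fourier/Parseval comparison of Lemma \ref{lem10} (the $n\beta c$ error term) to be justified; the "$(1+o(1))$" there is doing real work that should be made explicit.
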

As a direct consequence of the main result, we easily have (we  refer to \cite{BB, V} for the case when $\beta=2$)
\begin{cor}Let $t_k$ be the $k$-th smallest gap and we define $$ \tau_k=n^{(\beta+2)/(\beta+1)}\times\\(A_{\beta}/(\beta+1))^{{1}/({\beta+1})}t_k,$$ then we have
$$\lim_{n\to+\infty}\mathbb P(\tau_k\in A)=\int_A\frac{\beta+1}{(k-1)!} x^{k(\beta+1)-1}e^{-x^{\beta+1}}dx$$
 for any bounded interval $A\subset\mathbb{R}_{+}$.
%In particular, the limiting density function for $\tau_1$ is
%$$ (\beta+1)  x^{ \beta }e^{-x^{\beta+1}} .$$
\end{cor}
%In fact, we can also prove the following small gaps between $r-1$ particles (with $r=1$ in Theorem \ref{thm1}).\begin{thm}\label{thm2}
%Suppose $(\theta_1,\cdots,\theta_{n})\sim\mu_{\beta,n}$, $r$ and $(r+1)\beta/2$ are positive integers,
%then  as $n\to+\infty,$ the process $$ \sum_{i=1}^n\delta_{(n^{\gamma}(\theta_{(i+r)}-\theta_{(i)}),\theta_{(i)})}$$ converges to a Poisson point process $ \chi_r$ with
%intensity\begin{align*}&\mathbb{E}\chi_r(A\times I)= \frac{S_{r-1}(\beta,\beta,\beta/2)A_{\beta,r}|I|}{2\pi(r-1)!}\int_Au^{r(r+1)\beta/2+r-1}du.
%\end{align*}for $$\gamma=1+(r(r+1)\beta/2+r)^{-1},$$any bounded Borel set $ A\subset\mathbb{R}_{+}$ and $I\subseteq(-\pi,\pi)$.
%\end{thm}

 %\textcolor{red}{please write down the formula}
 %In particular, when $k=1$, we have the distribution for the smallest eigenvalue,$$\mathbb P(m_1..)=..$$

\subsection{Factorial moments and correlation functions}
We first review some basic concepts about the factorial moments and the correlation functions of a point process. 
Let $$X=\sum_{i}\delta_{X_i}$$ be a simple point process on $\mathbb R$, consider the point process
$$X^{(k)}=\sum_{X_{i_1},\cdots, X_{i_k}\mbox{all distinct}}\delta_{(X_{i_1},\cdots, X_{i_k})}$$
on $\mathbb R^k$. One can define   a measure $m_k$ on $\mathbb R^k$ by $$m_k(A)=\mathbb E(X^{(k)}(A))$$
for any Borel set $A$ in $\mathbb R^k$. If $m_k$ is absolutely continuous with respect to the Lebesgue measure, then there exists a function $f_k$ on $\mathbb R^k$ such that for any Borel sets $B_1, \cdots, B_k$ in $\mathbb R$, we have 
$$m_k(B_1\times \cdots \times B_k)=\int_{B_1\times \cdots \times B_k}f_k(x_1,\cdots, x_k)dx_1\cdots dx_k.$$
$f_k$ is called the $k$-point correlation function of the point process. Note that $f_k$ is not a probability density, but it admits the following probabilistic interpretation: for distinct points $x_1,\cdots, x_k$ in $\mathbb R$,  if $[x_ i, x_i+dx_i ], i=1, \cdots, k$ are neighbourhoods of $x_i$, then 
$f_k(x_1,\cdots, x_k)dx_1\cdots dx_k$ is the probability of the event that each set $[x_ i, x_i+dx_i ]$ contains a particle.

Moreover, one can check that  the $k$th factorial moment of a point process and the $k$-point correlation function satisfy
$$m_k(B^k)=\mathbb E \left(\frac{(X(B)!)}{(X(B)-k)!}\right)=\int_{B^k}f_k(x_1,\cdots, x_k)dx_1\cdots dx_k,$$
where $B$ is a Borel set in $\mathbb R$. 

If $X$ is a determinantal point process, then the $k$-point correlation function has the representation 
\begin{equation}\label{fk}f_k(x_1,\cdots, x_k)=\det [K(x_i, x_j)]_{1\leq i, j\leq k}\end{equation}
where $K(x,y)$ is a symmetric kernel. For example, in the case of CUE which is   a Haar measure on the unitary group U(n) with the joint density given in \eqref{ji} with $\beta=2$, the $k$-point correlation function is 
$$f_k(\theta_1,\cdots, \theta_k)=\det [K_n(\theta_i-\theta_j)]_{1\leq i, j\leq k},\,\,\,\, K_n(\theta)=\frac 1{2\pi}\frac{\sin (n\theta/2)}{\sin (\theta/2)}.$$
More properties regarding the correlation functions of determinantal point processes can be found in \cite{So2}. 

\subsection{Strategy and key lemmas}
%\textcolor{red}{please write at least half page  about: the difference between our approach with Soshnikov or Ben Arous-Bourgades's approach, emphasize on the key idea,
%the main steps and so forth. I think the main difference is: GUE or CUE are determinantal point process, so that one may use some nice trick or formula to derive the
%result, all its correlation functions are determinants
%based on the same kernel, we can use the determinant expression to give the limit of its correlation functions, we can also use the Hadamard-Fisher inequality to give
%the uniform boundedness(but what's the trick and formula? can you explain with few lines?). But in our case, the correlation functions are more complicated,
%one can not use the nice structure, but how to overcome this technical difficulty? explain more please}
Now we explain the main steps   to prove Theorem \ref{thm1}.
As in \cite{BB, So},  we still need to reduce the problem to the convergence of the factorial moments of $\chi^{(n)}$, but the proof  follows a quite different way.
This is because, for the determinantal point processes as considered in \cite{BB, So}, there are many structures one can make use of.  For example, all the point
correlation functions of the determinantal point precesses are given explicitly by symmetric kernels as in \eqref{fk} and one can express the factorial moments in terms of these
correlation functions,  and thus one can use Hadamard-Fischer inequality to control the estimates. But for general C$\beta$E, they are not  determinantal point
processes, one can only express the point correlation functions as  integrals of the joint density, and this causes many difficulties and all the proofs require delicate estimates of the integrals.

By the moment method, Theorem \ref{thm1} will be proved if we can prove the following convergence of the factorial moment \begin{align}\label{goal1}&\lim_{n\to+\infty}\mathbb{E}\left(\frac{( {\chi}^{(n)}(A\times I))!}{( {\chi}^{(n)}(A\times I)-k)!}\right)
=\left(\int_Au^{\beta}du\right)^k\left(\frac{|I|A_{\beta}}{2\pi}\right)^k
\end{align}
 for any fixed positive integer $k$, where   $ A\subset\mathbb{R}_{+}$ is any bounded interval and $I\subseteq(-\pi,\pi)$.

We will not prove this convergence directly. We will study the following auxiliary point process instead.
We now introduce $ \theta_{i,j}=\theta_i-\theta_j$ for $ \theta_i>\theta_j,$  $ \theta_{i,j}=\theta_i-\theta_j+2\pi$ for $ \theta_i<\theta_j.$ For any $\gamma>0$,  we  define \begin{equation}\label{thetar} \theta_{i,j,\gamma}=(n^{\gamma}\theta_{i,j},\theta_j)\end{equation}and \begin{align}\label{chit}&\widetilde{\chi}^{(n,\gamma)}=\sum_{i\neq j}\delta_{\theta_{i,j,\gamma}},\ \widetilde{\chi}^{(n)}=\widetilde{\chi}^{(n,\gamma)}\Big|_{\gamma=\frac{\beta+2}{\beta+1}},
\end{align} i.e., $\widetilde{\chi}^{(n)}$ is the point process of all normalized spacings, then   we have \begin{equation}\chi^{(n)}\leq\widetilde{\chi}^{(n)}. \end{equation} In fact,  we can rewrite \begin{equation}\widetilde{\chi}^{(n,\gamma)}=\sum\limits_{j=1}^{n-1}\widetilde{\chi}^{(n,\gamma,j)} \end{equation} such that\begin{align}&\widetilde{\chi}^{(n,\gamma,j)}=\sum_{i=1}^n\delta_{\left(n^{\gamma}(\theta_{(i+j)}-\theta_{(i)}),\theta_{(i)}\right)}.
\end{align}Then we have $$\widetilde{\chi}^{(n,\gamma,1)}= {\chi}^{(n,\gamma)}\,\,\mbox{and} \,\, 0\leq\widetilde{\chi}^{(n,\gamma,j)}(B)\leq n $$ for every Borel set $ B\subset\mathbb{R}^{2}$.

We need to show the following  lemma which indicates that there is no successive smallest gaps, which is also considered in \cite{BB, So} for the determinantal point processes.\begin{lem}\label{lem4} For any bounded interval  $ A\subset\mathbb{R}_{+}$ and $I\subseteq(-\pi,\pi),$  we have $\chi^{(n)}(A\times I)-\widetilde{\chi}^{(n)}(A\times I)\to0$ in probability  as $n\to+\infty$.\end{lem}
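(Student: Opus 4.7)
The plan is to prove $\mathbb{E}\bigl[\widetilde{\chi}^{(n)}(A\times I) - \chi^{(n)}(A\times I)\bigr] \to 0$; since the difference is a nonnegative integer-valued random variable, Markov's inequality then yields convergence in probability. Writing the difference as $\sum_{j=2}^{n-1}\widetilde{\chi}^{(n,\gamma,j)}(A\times I)$ and setting $M := \sup A$, any contribution with $j\geq 2$ comes from a pair $(i,i+j)$ with $\theta_{(i)}\in I$ and $\theta_{(i+j)}-\theta_{(i)}\leq Mn^{-\gamma}$, which forces the intermediate point $\theta_{(i+1)}$ to lie in the same forward arc of length $Mn^{-\gamma}$. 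Associating to each such contribution the ordered triple of particle indices at positions $(i,i+1,i+j)$ gives an injection, so
\[
\mathbb{E}\bigl[\widetilde{\chi}^{(n)}(A\times I) - \chi^{(n)}(A\times I)\bigr] \leq \int_I d\theta_1 \int_{\theta_1}^{\theta_1+Mn^{-\gamma}} d\theta_2 \int_{\theta_1}^{\theta_1+Mn^{-\gamma}} d\theta_3\, \rho_3(\theta_1,\theta_2,\theta_3),
\]
where $\rho_3$ denotes the $3$-point correlation function.

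The heart of the proof is the pointwise upper bound $\rho_3(\theta_1,\theta_2,\theta_3)\leq C_\beta\, n^{3+3\beta}\prod_{j<l}|\theta_j-\theta_l|^\beta$. Writing
\[
\rho_3(\theta_1,\theta_2,\theta_3)=\frac{n(n-1)(n-2)}{C_{\beta,n}}\prod_{1\leq j<l\leq 3}|e^{i\theta_j}-e^{i\theta_l}|^\beta\cdot I(\theta_1,\theta_2,\theta_3),
\]
with $I(\theta_1,\theta_2,\theta_3):=\int_{(-\pi,\pi)^{n-3}}\prod_{j\leq 3<l}|e^{i\theta_j}-e^{i\theta_l}|^\beta\prod_{3<j<l}|e^{i\theta_j}-e^{i\theta_l}|^\beta\,d\theta_4\cdots d\theta_n$, I apply H\"older's inequality with exponents $(3,3,3)$ to the prefactor $\prod_{j\leq 3}\prod_{l>3}|e^{i\theta_j}-e^{i\theta_l}|^\beta$, separating the three external charges. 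Rotational invariance of the remaining integral identifies each resulting factor as $I(\theta,\theta,\theta)$, independent of $\theta$, and a single integration over $\theta\in(-\pi,\pi)$ then gives $I(\theta,\theta,\theta)=C_{\beta,n-3,(3)}/(2\pi)$. Combining with $|e^{i\theta_j}-e^{i\theta_l}|\leq|\theta_j-\theta_l|$ and the inequality $C_{\beta,n-3,(3)}\leq C_{\beta,n}(n\beta)^{3\beta}$ from Lemma~\ref{lem7} (applied with $k=3$) yields the claimed estimate on $\rho_3$.

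Substituting this bound and rescaling $u=n^{\gamma}(\theta_2-\theta_1),\,v=n^{\gamma}(\theta_3-\theta_1)$ over $[0,M]^2$, the expectation is bounded by a constant multiple of
\[
|I|\,M^{3\beta+2}\,n^{3+3\beta-\gamma(3\beta+2)}\int_0^1\!\!\int_0^1 u^\beta v^\beta|u-v|^\beta\,du\,dv.
\]
With $\gamma=(\beta+2)/(\beta+1)$ the exponent simplifies to $3+3\beta-\gamma(3\beta+2)=-(2\beta+1)/(\beta+1)<0$, so the expectation vanishes as $n\to+\infty$ and the lemma follows. The main obstacle is the sharp pointwise bound on $\rho_3$: without extracting the full repulsion factor $\prod|\theta_j-\theta_l|^\beta$ via the H\"older-plus-rotational-invariance argument, the subsequent scaling would not close, since one cannot afford the trivial bound $\rho_3\lesssim n^3$ at scale $n^{-\gamma}$.
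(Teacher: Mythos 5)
Your proof is correct, and its core is the same as the paper's: both arguments reduce ``no successive small gaps'' to the statement that three points rarely fall in a forward arc of length $O(n^{-\gamma})$, both control the resulting three-point integral by the two-component constant $C_{\beta,n-3,(3)}$, and both close with the first inequality of Lemma \ref{lem7} ($C_{\beta,n-3,(3)}\leq C_{\beta,n}(n\beta)^{3\beta}$) and the exponent count $3+3\beta-\gamma(3\beta+2)=-(2\beta+1)/(\beta+1)<0$. The implementation differs mildly. The paper bounds $\mathbb{P}(\chi^{(n)}\neq\widetilde{\chi}^{(n)}\text{ on }A\times I)\leq\mathbb{E}\,\widetilde{\chi}^{(n,\gamma,2)}(B)$ and invokes Lemma \ref{lem11}, whose proof runs through the second inequality of Lemma \ref{lem10}: the Vandermonde factor among the three clustered points is crudely bounded by $c^{3\beta}$ and the cross terms are handled by the integrated AM--GM step $\prod_j|F(x_1+t_j)|^{\beta}\leq\frac1k\sum_j|F(x_1+t_j)|^{k\beta}$. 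You instead bound the first moment of the difference directly and prove a pointwise estimate $\rho_3\leq C_\beta n^{3+3\beta}\prod_{j<l}|\theta_j-\theta_l|^{\beta}$ by H\"older with exponents $(3,3,3)$ plus rotational invariance --- which is the same mechanism in pointwise rather than integrated form --- and then keep the repulsion factor exact under the rescaling. Your version buys a clean, reusable pointwise bound on $\rho_3$ and avoids Lemmas \ref{lem10}--\ref{lem11} here; the paper's version is set up so that the same machinery (Lemma \ref{lem10}, Lemma \ref{lem11}) is reused for the uniform bound \eqref{18} and the factorial-moment computations, which is why it takes the integrated route. Both are complete; your closing remark that the trivial bound $\rho_3\lesssim n^3$ would not suffice (since $3-2\gamma\geq0$ for $\beta\geq1$) correctly identifies why the repulsion factor must be extracted in either formulation.
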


%Thanks to Proposition 2.1 in \cite{BB} which is the consequence of Kallenberg's result on the convergence of point processes \cite{Ka},
The proof of Lemma \ref{lem4} replies on the estimates of the integration of the  3-point correlation functions. For the determinantal point processes as considered in \cite{BB, So},  all the point correlation functions can be expressed in terms of the determinant of the   kernels, and thus all the estimates  follow from the estimates of the   kernels. But in our case, we can only express   the correlation functions as the integrations of the joint density and thus we will need several integral inequalities as in Lemma \ref{lem10} in \S \ref{ii}. These inequalities will be applied many times in the whole proof.  

The significance of the above lemma is that, instead of proving the convergence of the factorial moment of $\chi^{(n)}$ in \eqref{goal1}, it's enough to prove the following convergence of the factorial moment of $\widetilde\chi^{(n)}$ of all normalized spacings \begin{align}\label{20}&\lim_{n\to+\infty}\mathbb{E}\left(\frac{(\widetilde{\chi}^{(n)}(A\times I))!}{(\widetilde{\chi}^{(n)}(A\times I)-k)!}\right)
=\left(\int_Au^{\beta}du\right)^k\left(\frac{|I|A_{\beta}}{2\pi}\right)^k
\end{align} for any fixed $k$. % which seems easier to derive compared with that of $\widetilde\chi^{(n)}$. %, which implies Theorem \ref{thm1}.
% then, together with  Lemma \ref{lem4}, .
 Actually,  \eqref{20}  is the direct consequence of  the following Lemma \ref{lem8} and Lemma \ref{lem9}.

\begin{lem}\label{lem8}For any bounded interval $A\subset\mathbb{R}^+$,  $I\subseteq(-\pi,\pi)$ and
any positive integer $k\geq 1$,   we have\begin{align*}&
\mathbb{E}\left(\frac{(\widetilde{\chi}^{(n)}(A\times I))!}{(\widetilde{\chi}^{(n)}(A\times I)-k)!}\right)
-\left(\int_Au^{\beta}du\right)^k\frac{C_{\beta,n-2k,k}(I)}{C_{\beta,n}n^{k\beta}}\to 0
\end{align*}as $n\to+\infty$.\end{lem}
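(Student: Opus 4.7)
The factorial moment on the left side of the claim expands into a sum over ordered $k$-tuples of distinct ordered pairs $(i_s,j_s)$ with $i_s\neq j_s$,
$$\mathbb{E}\left[\frac{(\widetilde{\chi}^{(n)}(A\times I))!}{(\widetilde{\chi}^{(n)}(A\times I)-k)!}\right]=\sum\mathbb{P}\bigl((n^{\gamma}\theta_{i_s,j_s},\theta_{j_s})\in A\times I\text{ for }s=1,\ldots,k\bigr).$$
The plan is to split this sum according to whether the $2k$ labels $\{i_s,j_s:s=1,\ldots,k\}$ are all distinct (the \emph{disjoint} contribution) or not (the \emph{degenerate} contribution), and to show that the disjoint piece matches the stated main term while the degenerate piece is $o(1)$.

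For the disjoint piece, the symmetry of $J(\theta_1,\ldots,\theta_n)$ under permutations lets me relabel each tuple to have $(i_s,j_s)=(2s-1,2s)$, producing a combinatorial prefactor $n!/(n-2k)!\sim n^{2k}$ multiplying
$$\frac{1}{C_{\beta,n}}\int\prod_{s=1}^{k}\mathbf{1}_{(n^{\gamma}(\theta_{2s-1}-\theta_{2s}),\theta_{2s})\in A\times I}\prod_{j<l}|e^{i\theta_j}-e^{i\theta_l}|^{\beta}\,d\theta_1\cdots d\theta_n.$$
I would then substitute $\theta_{2s}=\phi_s\in I$ and $\theta_{2s-1}=\phi_s+n^{-\gamma}u_s$ with $u_s\in A$, picking up a Jacobian $n^{-\gamma k}$. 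The expansion $|e^{in^{-\gamma}u_s}-1|^{\beta}=(n^{-\gamma}u_s)^{\beta}(1+O(n^{-2\gamma}))$ pulls out $\prod_s u_s^{\beta}\cdot n^{-\gamma k\beta}$, while $|e^{i(\phi_s+n^{-\gamma}u_s)}-e^{i\theta_l}|^{\beta}=|e^{i\phi_s}-e^{i\theta_l}|^{\beta}(1+o(1))$ lets me merge each pair into an effective charge-$2$ particle at $\phi_s$, so the residual integral collapses to $C_{\beta,n-2k,k}(I)$. Using $\gamma(\beta+1)=\beta+2$ to combine scaling factors, the disjoint piece becomes $(\int_A u^{\beta}\,du)^{k}\cdot C_{\beta,n-2k,k}(I)/(C_{\beta,n}n^{k\beta})\cdot(1+o(1))$.

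For the degenerate piece, I consider the multigraph on $\{1,\ldots,n\}$ whose (ordered) edges are the pairs $(i_s,j_s)$: the constraints force each connected component of sizes $m_r$ (with $\sum m_r=m<2k$) to lie inside an arc of length $O(n^{-\gamma})$. Per component I collect $O(n^{m_r})$ choices of labels, a localization factor $n^{-\gamma(m_r-1)}$, intra-component interactions of size $n^{-\gamma\beta m_r(m_r-1)/2}$, and an external partition function bounded via Lemma \ref{lem7} (applied once per cluster) by $C_{\beta,n}\cdot n^{\beta m_r(m_r-1)/2}$. A direct computation with $\gamma=(\beta+2)/(\beta+1)$ shows that each cluster contributes $n^{e(m_r)}$ with $e(2)=0$ and $e(m_r)\leq e(3)=-(2\beta+1)/(\beta+1)<0$ for $m_r\geq 3$, so any overlap that forces some $m_r\geq 3$ gives $o(1)$. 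The only combinatorial type that would produce $O(1)$ despite overlap is a cluster of size $2$ containing both $(i,j)$ and $(j,i)$, but since $\theta_{i,j}+\theta_{j,i}=2\pi$ these cannot both lie in $n^{-\gamma}A$, so this case contributes nothing.

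The principal technical obstacle will be controlling the $o(1)$ errors in the disjoint step, most delicately the replacement $|e^{i(\phi_s+n^{-\gamma}u_s)}-e^{i\theta_l}|^{\beta}\to|e^{i\phi_s}-e^{i\theta_l}|^{\beta}$, which degrades exactly when some outer $\theta_l$ lies within $O(n^{-\gamma})$ of $\phi_s$. I would dispatch this by splitting the $\theta_l$-integration into a far regime, where the ratio is $1+O(n^{-\gamma}/|\theta_l-\phi_s|)$ and integrates harmlessly against the two-component density, and a near regime, in which $\{\phi_s,\phi_s+n^{-\gamma}u_s,\theta_l\}$ forms an effective cluster of size $\geq 3$ and is bounded exactly as in the degenerate analysis, yielding an $o(1)$ contribution. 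Combining the three pieces gives the claimed convergence.
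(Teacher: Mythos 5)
Your decomposition is the same one the paper uses: the factorial moment counts ordered $k$-tuples of distinct ordered pairs, the all-distinct-index part is exactly $\mathbb{E}\rho^{(k,n)}(B^k)$ (compare \eqref{rdg}), and the main term is extracted by merging each close pair into a charge-$2$ particle. The problem is that the two places where you wave at the hard analysis are exactly where the paper has to work. First, your degenerate piece: the power counting $e(2)=0$, $e(m_r)\leq e(3)=-(2\beta+1)/(\beta+1)$ is correct, but the factor you call ``an external partition function bounded via Lemma \ref{lem7} (applied once per cluster) by $C_{\beta,n}\cdot n^{\beta m_r(m_r-1)/2}$'' does not follow from Lemma \ref{lem7}. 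After collapsing the clusters you are left with a multi-component constant having several super-particles of charges $m_1,\dots,m_R$; Lemma \ref{lem7} evaluates only the case of a \emph{single} super-particle, because only then does the Selberg/Morris integral have a closed product form. The special case where all $m_r=2$ is precisely the uniform bound \eqref{18}, which the paper cannot get from Lemma \ref{lem7} and instead proves in \S\ref{18a} by a separate moment argument at the scale $\gamma=1$ (Lemma \ref{lem11} plus Minkowski). The paper's actual control of the degenerate piece (\eqref{25}) then goes through the cluster-size variable $a$ of Lemma \ref{lem14}, the decay $\mathbb{E}(a-1)_+^p\to0$, and H\"older against the moment bound \eqref{29} — machinery your cluster heuristic silently presupposes. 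For $k\geq3$ (e.g.\ one $3$-cluster plus one $2$-cluster) your argument as written has no bound for the required constant.

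Second, your disjoint piece. The replacement $|e^{i(\phi_s+n^{-\gamma}u_s)}-e^{i\theta_l}|^{\beta}\to|e^{i\phi_s}-e^{i\theta_l}|^{\beta}$ is handled in the paper by the Parseval inequality \eqref{34} of Lemma \ref{lem10}, which bounds $\int\bigl||F(x_1+u)|^{\beta}-|F(x_1)|^{\beta}\bigr|^2dx_1$ by $(n\beta c)^2\int|F|^{2\beta}$ and yields, via Lemma \ref{lem12}, a relative error $O(kn\beta c_n)=O(n^{-1/(\beta+1)})$ with no near/far case analysis. Your far regime needs a uniform-in-$n$ bound on $\sum_l|\theta_l-\phi_s|^{-1}$ integrated against the two-component density (not established by anything you cite), and your near regime falls back on ``the degenerate analysis,'' i.e.\ on the same unproved multi-cluster bound as above. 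So the skeleton is right, but the two load-bearing estimates — the uniform bound \eqref{18} and the $L^2$ replacement control of Lemma \ref{lem10} — are asserted rather than proved, and neither is a routine consequence of the tools you invoke.
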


To prove Lemma \ref{lem8}, we will introduce another auxiliary point process  $$\rho^{(k,n,\gamma)}=\sum_{i_1,\cdots,i_{2k}\ \text{all distinct}}\delta_{\left(\theta_{i_1,i_2,\gamma},\cdots,\theta_{i_{2k-1},i_{2k},\gamma}\right)},\ \
\rho^{(k,n)}=\rho^{(k,n,\gamma)}\Big|_{\gamma=\frac{\beta+2}{\beta+1}},
$$
where $\theta_{i_{2j-1},i_{2j}, \gamma}$ ($1\leq j\leq k$) is  defined in \eqref{thetar}.

Regarding $\rho^{(k,n)}$, we will see that the expectation of $\rho^{(k,n)}$  will converge to the $k$-th factorial moment of
$\widetilde\chi^{(n)}$. To be more precise,  Lemma \ref{lem8} is the consequence of the following two convergences
$$\lim_{n\to+\infty}\left(\mathbb{E}\frac{(\widetilde{\chi}^{(n)}(A\times I))!}{(\widetilde{\chi}^{(n)}(A\times I)-k)!}
-\mathbb{E}\rho^{(k,n)}((A\times I)^k)\right)=0$$and $$
\lim_{n\to+\infty}\left(\mathbb{E}(\rho^{(k,n)}((A\times I)^k))
-\left(\int_Au^{\beta}du\right)^k\frac{C_{\beta,n-2k,k}(I)}{C_{\beta,n}n^{k\beta}}\right)=0.
$$%then Lemma \ref{lem8} follows from \eqref{25} and \eqref{26}.
Here,  the second limit is the most significant part and indicates the idea of the whole proof,  it implies the bounds of $ \mathbb{E}(\rho^{(k,n)}((A\times I)^k)$ by the quotient of the partition
functions $C_{\beta,n-2k,k}(I)/(C_{\beta,n}n^{k\beta}),$ therefore,  the problem regarding the smallest gaps in nature is
just a problem about integral estimates.
To be more precise, one of the  crucial ideas of the whole method is that one can bound  $\mathbb{E} \left(\rho^{(k,n)}((A\times I)^k)\right)$
which is expressed in terms of the integral of the joint density of  the one-component log-gas (see \eqref{aaaaa}) by the generalized partition function of
the two-component log-gas (see \eqref{bbbbb} and Lemma \ref{lem12}). %The proof is based on the integral inequalities we derive in \S\ref{18a}.

The intuitive idea of the whole proof is natural: for a pair of two particles  with charge 1  of the smallest gap  of C$\beta$E, these two particles will tend to a ``double particle'' with charge 2 in the limit, therefore,  
 if there are $k$-pair of such particles among $n$ particles in one-component log-gas, then such system can be approximated by   two-component log-gas with $n-2k$ particles with charge $1$ and $k$ particles with charge $2$, therefore,  one needs to compare the partition function of one-component log-gas with the partition function of the two-component log-gas as in the following lemma. 
\begin{lem}\label{lem9}For any interval $I\subseteq(-\pi,\pi)$ and
any positive integer $k\geq 1$, we have\begin{align*}&\lim_{n\to+\infty}\frac{C_{\beta,n-2k,k}(I)}{C_{\beta,n}n^{k\beta}}
=\left(\frac{|I|A_{\beta}}{2\pi}\right)^k.
\end{align*}\end{lem}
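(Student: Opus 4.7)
The plan is to handle the base case $k=1$ by a symmetry argument using Lemma \ref{lem7}, and then treat $k \geq 2$ by subdividing $I$ into many small pieces and leveraging the uniform and upper bounds alluded to in the Strategy section.

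For $k=1$, the integrand defining $C_{\beta, n-2, 1}(I)$ is invariant under the simultaneous rotation $\theta_j \mapsto \theta_j + c$ of all variables on the circle. Averaging over $c \in (-\pi,\pi)$ immediately gives $C_{\beta, n-2, 1}(I) = (|I|/(2\pi))\,C_{\beta, n-2, 1}$, and then Lemma \ref{lem7} yields
\[
\frac{C_{\beta, n-2, 1}(I)}{C_{\beta, n} n^{\beta}} = \frac{|I|}{2\pi}\cdot \frac{C_{\beta, n-2, 1}}{C_{\beta, n} n^{\beta}} \longrightarrow \frac{|I| A_{\beta}}{2\pi}.
\]

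For the inductive step $k \geq 2$, I would partition $I$ into $m$ equal-length subintervals $I_1, \ldots, I_m$, with $m$ large to be chosen at the end, and use the exchangeability of the $k$ charge-$2$ coordinates to write
\[
C_{\beta, n-2k, k}(I) = \sum_{(p_1, \ldots, p_k) \in \{1, \ldots, m\}^k} \int_{I_{p_1} \times \cdots \times I_{p_k}} \prod_{j<l} |e^{i\theta_j} - e^{i\theta_l}|^{q_j q_l \beta} \prod_j d\theta_j.
\]
When the indices $p_1, \ldots, p_k$ are pairwise distinct and the corresponding subintervals are well separated, the cross-interactions $|e^{i\theta^{(i)}} - e^{i\theta^{(j)}}|^{4\beta}$ between pairs of charge-$2$ particles are essentially constant over the product of subintervals, and the integral approximately factorizes into a product of $k$ single-charge-$2$ integrals, each asymptotically $(|I|/m)\cdot A_{\beta}/(2\pi)\cdot C_{\beta, n} n^{\beta}$ by the $k=1$ analysis. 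Summing the $m(m-1)\cdots(m-k+1)$ such ordered tuples recovers the conjectured limit $(|I| A_{\beta}/(2\pi))^k$ after passing first $n \to \infty$ and then $m \to \infty$.

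To make this rigorous, two estimates are essential. First, a uniform upper bound of the form $C_{\beta, n-2k, k}(J)/(C_{\beta, n} n^{k\beta}) \leq C_{\beta,k}\,|J|^k$ valid for every subinterval $J$ and every $n$, which disposes of the diagonal terms where $p_i = p_j$ for some $i \neq j$: their total weight is $O(m^{-1})$ after applying the bound on each subinterval of length $|I|/m$, via the comparison inequalities for integrations (Lemma \ref{lem10}) and Lemma \ref{lem7}. Second, an asymptotically sharp upper bound on the two-point correlation of the charge-$2$ particles, ensuring that the factorization over separated subintervals is correct at leading order rather than merely in order of magnitude. The hardest step is this second one: delicate identification of the constant forces the sharp 2-point correlation asymptotics, which the paper extracts from the contour-deformation analysis of a $2\beta$-dimensional integral representation for the two-component log-gas. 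Once both bounds are in place, the two-step limit $n \to \infty$ followed by $m \to \infty$ closes the induction and yields Lemma \ref{lem9}.
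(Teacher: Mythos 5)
Your base case $k=1$ is exactly the paper's argument (rotation invariance giving $C_{\beta,n-2,1}(I)=\frac{|I|}{2\pi}C_{\beta,n-2,1}$, then Lemma \ref{lem7}). The inductive step, however, has a genuine gap. Your subdivision scheme requires two inputs that are not available. First, to kill the diagonal tuples you invoke a Wegner-type bound $C_{\beta,n-2k,k}(J)/(C_{\beta,n}n^{k\beta})\leq C_{\beta,k}|J|^k$ uniformly in $n$ and in the subinterval $J$; but \eqref{18} is only a bound for $J=(-\pi,\pi)$ with no $|J|^k$ scaling, and neither Lemma \ref{lem7} nor Lemma \ref{lem10} produces the localization in $J$ (Lemma \ref{lem10} localizes in the gap variable $\theta_j-\theta_{j-k}$, not in the position of the charge-$2$ particles). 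Second, and more seriously, the claimed approximate factorization over well-separated subintervals is false as stated: even when the $k$ charge-$2$ particles sit in disjoint, separated arcs, the integral does not split into a product of single-charge-$2$ integrals, because all $n-2k$ charge-$1$ background particles couple to every charge-$2$ particle simultaneously. What you actually need is asymptotic factorization of the $k$-point correlation function of the charge-$2$ species. For $k=2$ the paper proves only a one-sided version of this (the pointwise upper bound of Lemma \ref{lem6}, i.e.\ \eqref{19}), and your scheme would additionally need the matching lower bound for separated arguments, which you do not supply. For $k\geq 3$ no such $2k$-dimensional correlation asymptotics are established anywhere in the paper -- indeed the authors state explicitly that they avoid them.

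The paper closes the induction by an entirely different, probabilistic route that only consumes the $k=1$ limit and the $k=2$ upper bound \eqref{19}. Setting $X_n=\widetilde{\chi}^{(n)}(A\times I)$ with $A=(0,a(\lambda))$ tuned so that $\mathbb{E}X_n\to\lambda$ (via Lemma \ref{lem8} with $k=1$), the Cauchy--Schwarz inequality gives $\liminf\mathbb{E}(X_n(X_n-1))\geq\lambda^2-\lambda$, which combined with \eqref{19} yields both the $k=2$ limit and the variance control $\limsup_n\mathbb{E}(X_n-\lambda)^2\leq\lambda$. The algebraic identity \eqref{iden}, positivity of $(X_n-\lambda)^2 X_n!/(X_n-j+1)!$, and the H\"older bound \eqref{upperd} then sandwich the $(j+1)$-th factorial moment between the known $j$-th and $(j-1)$-th moments up to errors of order $\lambda^{j+1/2}$, and letting $\lambda\to+\infty$ completes the induction. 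If you want to salvage your approach you would have to prove the separated-arc factorization of the $k$-point function for all $k$, which is a substantially harder analytic task than the single estimate \eqref{19} the paper actually needs.
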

The convergence for $k=1$ is guaranteed by Lemma \ref{lem7}.
In \S\ref{ccccc}, we will prove  Lemma \ref{lem9}  by induction based on Lemma \ref{lem8} and the following inequality
\begin{equation}\label{19}\limsup_{n\to+\infty}\frac{C_{\beta,n-4,2}(I)}{C_{\beta,n}n^{2\beta}}
\leq\left(\frac{|I|A_{\beta}}{2\pi}\right)^2.
\end{equation}

%The upper bound \eqref{18} will be proved at first in \S\ref{18a} based on the estimate of the factorial moment of $\widetilde{\chi}^{(n,1)}((0,c_0)\times (-\pi,\pi)) $
%for a fixed constant $c_0\in(0,1/\beta),$ and  \eqref{18} will be used in the proof of Lemma \ref{lem8}.

 The proof of the upper bound \eqref{19} is complicated and it will be proved in \S\ref{19a} based on the properties of Selberg
integral and generalized hypergeometric functions derived in \cite{For}, here a key point is the limit in Lemma \ref{lem6}.

%As a remark, the intuition is clear: small gaps occur when fluctuations of particles overcome the local repulsion, since gaps separated mesoscopically are essentially independent,
%one can guess the correct scaling from the value of $\beta$, and at that scaling one expects a Poisson process, see for example \cite{For} Exercise
%14.6.5, P697.
%Turning this intuition to a proof is not easy. In this paper we derive a method that requires only a weak form of this intuition, see Lemma \ref{lem6}.

  %We postpone this problem for further investigation.
 In a recent paper \cite{FTW}, the method developed in this article is further applied to derive the limiting distribution of the smallest gaps of GOE. Actually our method is quite general, it can be used to prove that of G$\beta$E  and more general ensembles.
In all cases, as indicated by the intuitive idea mentioned above, one of the main difficulties to study the smallest gaps  is to prove the analogue asymptotic limit as in Lemma \ref{lem9}, i.e., one has to prove the asymptotic limit of the quotient of the two-component log-gas and one-component log-gas, once this  is done, the smallest gaps can be proved to be converging to a Poisson distribution and hence the limiting density can be derived. 

As a  final remark, we also conjecture that Theorem \ref{thm1} must be true for any $\beta>0$, but our method only works for the positive integer $\beta$. This is because,
in the proof of the upper bound \eqref{19}, we use properties of generalized hypergeometric functions that are valid for positive integer $\beta$. As explained above,
if one can prove Lemma \ref{lem9} for every $\beta>0$ by other method without using the properties of generalized hypergeometric functions, then Theorem \ref{thm1}
will hold for every  $\beta>0$.
  %problem for further investigation.

\bigskip
\textbf{Acknowledgement:} We are indebted to the anonymous reviewers for providing many corrections and insightful comments, this paper would not have been possible without their supportive work.

\section{Proof of  Lemma \ref{lem7}}
Now we give the proof of Lemma \ref{lem7}, which is  based on the Selberg
integral. We refer to Selberg's original method \cite{Sel} and Aomoto's method \cite{A, A1} for the proof of the Selberg integral. We also refer to Chapter 4 in \cite{For} for other proofs and several applications of the Selberg integral, especially in random matrix theory. 

\begin{proof} We can write \begin{align*}&{C_{\beta,n_1,1}}\\=&\int_{-\pi}^{\pi}d\theta_1\cdots\int_{-\pi}^{\pi}d\theta_{n_1+1}
\prod_{1\leq j<k\leq n_1}|e^{i\theta_j}-e^{i\theta_k}|^{\beta} \prod_{1\leq j\leq n_1}|e^{i\theta_j}-e^{i\theta_{n_1+1}}|^{2\beta}\\=&\int_{-\pi}^{\pi}d\theta_1\cdots\int_{-\pi}^{\pi}d\theta_{n_1+1}
\prod_{1\leq j<k\leq n_1}|e^{i\theta_j}-e^{i\theta_k}|^{\beta}\prod_{1\leq j\leq n_1}|e^{i\theta_j}+1|^{2\beta}\\=&(2\pi)^{n_1+1}M_{n_1}(\beta,\beta,\beta/2),
\end{align*}here we used changing of variables $\theta_j\mapsto\theta_j+\theta_{n_1+1}\pm \pi\ (1\leq j\leq n_1) $ and the formula (4.4) in \cite{For}:\begin{align}\label{mn}M_{n}(a,b,\lambda)&:=\int_{-1/2}^{1/2}d\theta_1\cdots\int_{-1/2}^{1/2}d\theta_n\prod_{l=1}^ne^{\pi i\theta_l(a-b)}|1+e^{2\pi i\theta_l}|^{a+b}\times \nonumber\\&\prod_{1\leq j<k\leq n}|e^{2\pi i\theta_j}-e^{2\pi i\theta_k}|^{2\lambda} \nonumber\\&=\prod_{j=0}^{n-1}\frac{\Gamma(\lambda j+a+b+1)\Gamma(\lambda (j+1)+1)}{\Gamma(\lambda j+a+1)\Gamma(\lambda j+b+1)\Gamma(1+\lambda)}.
\end{align}Similarly, we have \begin{align}\label{32}&{C_{\beta,n_1,1}}(I)=(2\pi)^{n_1}|I|M_{n_1}(\beta,\beta,\beta/2)=(2\pi)^{-1}|I|{C_{\beta,n_1,1}},
\end{align}and\begin{align}\label{33}&{C_{\beta,n_1,(k)}}=(2\pi)^{n_1+1}M_{n_1}(k\beta/2,k\beta/2,\beta/2).
\end{align} For every positive integer $k$, we have\begin{align*}M_{n}(k\lambda,k\lambda,\lambda)=&\prod_{j=0}^{n-1}
\frac{\Gamma(\lambda (j+2k)+1)\Gamma(\lambda (j+1)+1)}{(\Gamma(\lambda (j+k)+1))^2\Gamma(1+\lambda)}\\=&\frac{1}{\Gamma(\lambda+1)^n}\prod_{j=k}^{2k-1}
\frac{\Gamma(\lambda (n+j)+1)}{\Gamma(j\lambda +1)}\prod_{j=1}^{k-1}\frac{\Gamma(j\lambda +1)}{\Gamma(\lambda (n+j)+1)},
\end{align*}thus we have \begin{align*}{C_{\beta,n_1,(k)}}=&(2\pi)^{n_1+1}M_{n_1}(k\beta/2,k\beta/2,\beta/2)\\=&\frac{(2\pi)^{n_1+1}}
{(\Gamma(\beta/2+1))^{n_1}}
\prod_{j=k}^{2k-1}
\frac{\Gamma(\beta (n_1+j)/2+1)}{\Gamma(j\beta/2 +1)}\prod_{j=1}^{k-1}\frac{\Gamma(j\beta/2 +1)}{\Gamma(\beta (n_1+j)/2+1)}.
\end{align*}And for $n_1=n-k>0$, we have\begin{align*}\frac{C_{\beta,n-k,(k)}}{C_{\beta,n}}=&\frac{(2\pi)^{1-k}(\Gamma(\beta/2+1))^{k}}
{\Gamma(n\beta/2 +1)}
\prod_{j=1}^{2k-1}\left(\frac{\Gamma(\beta (n_1+j)/2+1)}{\Gamma(j\beta/2 +1)}\right)^{\text{sgn}(j-k)}\\&=\frac{(2\pi)^{1-k}(\Gamma(\beta/2+1))^{k}}
{\Gamma(k\beta/2 +1)}
\prod_{j=1}^{k-1}
\frac{\Gamma(\beta (n+j)/2+1)\Gamma(j\beta/2 +1)}{\Gamma((k+j)\beta/2 +1)\Gamma(\beta (n-j)/2+1)}.
\end{align*}As $\ln \Gamma(x)$ is convex for $x>0,$ we have $(\Gamma(\beta/2+1))^{k}\leq \Gamma(k\beta/2 +1)$. For $n>k-1\geq j\geq 1$, we have $k\beta/2\geq 1,\ \beta j\geq 1 $ and \begin{align*}&\frac{\Gamma(j\beta/2 +1)}{\Gamma((k+j)\beta/2 +1)}\leq\left(\frac{\Gamma(j\beta/2 +1)}{\Gamma(j\beta/2 +2)}\right)^{k\beta/2}=\left(\frac{1}{j\beta/2 +1}\right)^{k\beta/2}\leq 1
\end{align*}and\begin{align*}&\frac{\Gamma(\beta (n+j)/2+1)}{\Gamma(\beta (n-j)/2+1)}\leq\left(\frac{\Gamma(\beta (n+j)/2+1)}{\Gamma(\beta (n+j)/2)}\right)^{\beta j}=\left(\beta (n+j)/2\right)^{\beta j}\leq \left(n\beta \right)^{\beta j},
\end{align*}therefore, we have \begin{align*}&\frac{C_{\beta,n-k,(k)}}{C_{\beta,n}}\leq(2\pi)^{1-k}
\prod_{j=1}^{k-1}\left(n\beta \right)^{\beta j}=(2\pi)^{1-k}
\left(n\beta \right)^{k(k-1)\beta/2},
\end{align*}which will imply the first inequality. Using convexity of $\ln \Gamma(x)$, we also have\begin{align*}&\left(\beta (n-j)/2+1\right)^{\beta j}\leq\frac{\Gamma(\beta (n+j)/2+1)}{\Gamma(\beta (n-j)/2+1)}\leq\left(\beta (n+j)/2\right)^{\beta j},
\end{align*}which implies\begin{align*}&\lim_{n\to+\infty}\frac{\Gamma(\beta (n+j)/2+1)}{\Gamma(\beta (n-j)/2+1)n^{\beta j}}=\left(\beta/2\right)^{\beta j}.
\end{align*}And thus we have \begin{align*} &\lim_{n\to+\infty}\frac{C_{\beta,n-k,(k)}}{C_{\beta,n}n^{k(k-1)\beta/2}}\\=&\frac{(2\pi)^{1-k}(\Gamma(\beta/2+1))^{k}}
{\Gamma(k\beta/2 +1)}
\prod_{j=1}^{k-1}
\frac{\Gamma(j\beta/2 +1)}{\Gamma((k+j)\beta/2 +1)}\cdot \lim_{n\to+\infty}\prod_{j=1}^{k-1}\frac{\Gamma(\beta (n+j)/2+1)}{\Gamma(\beta (n-j)/2+1)n^{\beta j}}\\=&\frac{(2\pi)^{1-k}(\Gamma(\beta/2+1))^{k}}
{\Gamma(k\beta/2 +1)}
\prod_{j=1}^{k-1}
\frac{\Gamma(j\beta/2 +1)}{\Gamma((k+j)\beta/2 +1)}\prod_{j=1}^{k-1}\left(\beta/2\right)^{\beta j}=:A_{\beta,k}.
\end{align*}  As $C_{\beta,n_1,(2)}=C_{\beta,n_1,1}, $ we have \begin{align*}&\lim_{n\to+\infty}\frac{C_{\beta,n-2,1}}{C_{\beta,n}n^{\beta}}=\lim_{n\to+\infty}
\frac{C_{\beta,n-2,(2)}}{C_{\beta,n}n^{\beta}}=A_{\beta,2},
\end{align*}and the expression of $ A_{\beta}=A_{\beta,2}$ follows directly from that of $A_{\beta,k}. $\end{proof}

\section{One more auxiliary point process}\label{aux}
Now we can introduce another auxiliary point process as  \begin{align}\label{rhoo}&\rho^{(k,n,\gamma)}=\sum_{i_1,\cdots,i_{2k}\ \text{all distinct}}\delta_{\left(\theta_{i_1,i_2,\gamma},\cdots,\theta_{i_{2k-1},i_{2k},\gamma}\right)}
\end{align}
and we define 
\begin{equation}\label{rhood}\rho^{(k,n)}=\rho^{(k,n,\gamma)}\Big|_{\gamma=\frac{\beta+2}{\beta+1}},\end{equation}
where $\theta_{i_{2j-1},i_{2j}, \gamma}$ ($1\leq j\leq k$) is defined in \eqref{thetar}.

We first have the following  lemma which will be used to prove that the expectation of the random variable $\rho^{(k,n)} $ converges to the factorial moment of
$\widetilde{\chi}^{(n)} $ (see \eqref{25} below). %which indicates  the potentially significant role of $\rho^{(k,n,\gamma)} $.

\begin{lem}\label{lem14} For any bounded intervals  $ A\subset\mathbb{R}_{+}$ and $I\subseteq(-\pi,\pi),$ let $B=A\times I$, then we have \begin{align*}&\rho^{(k,n,\gamma)}(B^k)\leq\frac{(\widetilde{\chi}^{(n,\gamma)}(B))!}{(\widetilde{\chi}^{(n,\gamma)}(B)-k)!},\,\,\,\gamma>0.
\end{align*}Let $c_1$ be such that $A\subset(0,c_1)$, $c_n=c_1n^{-\frac{\beta+2}{\beta+1}}$ and \begin{align*}&a=\max\left\{i-j:i,j\in\mathbb{Z},\ \theta_{(i)}-\theta_{(j)}\leq 2c_n \right\},
\end{align*}if $c_n\in (0,1)$, then we have\begin{align*}&0\leq\frac{(\widetilde{\chi}^{(n)}(B))!}{(\widetilde{\chi}^{(n)}(B)-k)!}-\rho^{(k,n)}(B^k)
\leq k(k-1)(a-1)(\widetilde{\chi}^{(n)}(B))^{k-1}
\end{align*}and\begin{align*}&\rho^{(k,n)}(B^k)\geq(\widetilde{\chi}^{(n)}(B))^k-k(k-1)a(\widetilde{\chi}^{(n)}(B))^{k-1}.
\end{align*}\end{lem}\begin{proof}We denote \begin{align*}&X_1=\{(i_1,\cdots,i_{2k}):i_j\in\mathbb{Z}, 1\leq i_j\leq n,\ \forall\ 1\leq j\leq 2k,\\ &i_{2j-1}\neq i_{2j},\ \forall\ 1\leq j\leq k, \{i_{2j-1}, i_{2j}\}\neq \{i_{2l-1}, i_{2l}\}, \ \forall\ 1\leq j<l\leq k\},\\&X_2=\{(i_1,\cdots,i_{2k}):i_j\in\mathbb{Z}, 1\leq i_j\leq n,\ \forall\ 1\leq j\leq 2k,\\ &i_{j}\neq i_{l},\  \ \forall\ 1\leq j<l\leq 2k\},\\&Y_{j,l}=\{(i_1,\cdots,i_{2k}):\{i_{2j-1}, i_{2j}\}\cap \{i_{2l-1}, i_{2l}\}\neq \emptyset\},
\end{align*}then we have $X_2\subseteq X_1$ and $X_1\setminus X_2=\cup_{1\leq j<l\leq k}Y_{j,l}.$ Let\begin{align*}&X_{j,B}=\{(i_1,\cdots,i_{2k})\in X_j:\theta_{i_{2j-1}, i_{2j},\gamma}\in B,\ \forall\ 1\leq j\leq k\},\ j=1,2,\\&Y_{j,l,B}=\{(i_1,\cdots,i_{2k})\in Y_{j,l}:\theta_{i_{2j-1}, i_{2j},\gamma}\in B,\ \forall\ 1\leq j\leq k\},
\end{align*}then we have \begin{equation}\label{rdg}\rho^{(k,n,\gamma)}(B^k)=|X_{2,B}|,\  X_{2,B}\subseteq X_{1,B}, |X_{1,B}|= \dfrac{(\widetilde{\chi}^{(n,\gamma)}(B))!}{(\widetilde{\chi}^{(n,\gamma)}(B)-k)!},\end{equation} which gives the first inequality, here $|X|$ is the cardinality of the set $X.$

 We also have $X_{1,B}\setminus X_{2,B}=\cup_{1\leq j<l\leq k}Y_{j,l,B}$ and by symmetry $|Y_{j,l,B}|=|Y_{1,2,B}| $ for $ 1\leq j<l\leq k,$ therefore \begin{align}\label{38}&|X_{1,B}|-| X_{2,B}|\leq\sum_{1\leq j<l\leq k}|Y_{j,l,B}|=k(k-1)|Y_{1,2,B}|/2.
\end{align}

Now we assume $ \gamma=\dfrac{\beta+2}{\beta+1}.$ If $a=0,$ then we have $\theta_{j,l}\geq n^{-\gamma}(2c_n)=2c_1 $ for every $1\leq j<l\leq n,$ thus  $\theta_{j,l,\gamma}\not\in B, $ and $\widetilde{\chi}^{(n)}(B)=\rho^{(k,n)}(B^k)=0;$ if $k=1$, then  by definition $\widetilde{\chi}^{(n)}(B)=\rho^{(k,n)}(B^k)$. Thus the second and third inequalities are clearly true in these two trivial cases, for the rest,  we only need to consider the case $a>0,k>1.$ The key point is to estimate $|Y_{1,2,B}|. $

For fixed $\theta_{i_{1}, i_{2},\gamma}\in B, $ we will show that there are at most $2(a-1) $ choices of $(i_3,i_4) $ to satisfy $ (i_1,\cdots,i_{2k})\in Y_{1,2,B} .$ Let \begin{align*}&T_j=\{l:l\neq i_j,\theta_{i_{j}, l,\gamma}\in B\}\cup\{l:l\neq i_j,\theta_{l,i_{j},\gamma}\in B\},\\ &T_j'=\{l:l\neq i_j,\theta_{i_{j}, l}\in (0,c_n)\}\cup\{l:l\neq i_j,\theta_{l,i_{j}}\in (0,c_n)\},\ j=1,2.
\end{align*}Then we have $T_j\subseteq T_j',$ since $\theta_{j,l,\gamma}\in B $ implies $n^{\gamma}\theta_{j,l}\in A\subset(0,c_1) $ and $\theta_{j,l}\in (0,n^{-\gamma}c_1)=(0,c_n) .$ Assume $ \theta_{i_1}=\theta_{(p)}$ then we have\begin{align*}\{\theta_l:l\in T_1'\cup \{i_1\}\}&=\{\theta_{(q)}(\text{mod}2\pi):|\theta_{(q)}-\theta_{(p)}|<c_n\}\\&=\{\theta_{(q)}(\text{mod}2\pi):r\leq q\leq s\},
\end{align*}for some $r,s\in \mathbb{Z}$ such that $|\theta_{(r)}-\theta_{(p)}|<c_n,\ |\theta_{(s)}-\theta_{(p)}|<c_n, $ therefore $|\theta_{(r)}-\theta_{(s)}|<2c_n ,$ and by definition of $a$ we have $s-r\leq a.$ Since $i_1\not\in T_1'$, we have\begin{align*}|T_1'|+1&=|\{\theta_l:l\in T_1'\cup \{i_1\}\}|=|\{\theta_{(q)}(\text{mod}2\pi):r\leq q\leq s\}|\\&\leq s-r+1\leq a+1,
\end{align*}and thus $|T_1|\leq |T_1'|\leq a$. Similarly, we have $|T_2|\leq |T_2'|\leq a.$

 Now for $\theta_{i_{1}, i_{2},\gamma}\in B, $ by definition we have $ i_2\in T_1$ and $i_1\in T_2.$

 If $\theta_{i_{3}, i_{4},\gamma}\in B, \ \{i_{1}, i_{2}\}\cap \{i_{3}, i_{4}\}\neq \emptyset,\ \{i_{1}, i_{2}\}\neq \{i_{3}, i_{4}\}$, then we must have $\{i_{3}, i_{4}\}=\{i_{1}, l\},\ l\in T_2\setminus\{i_1\} $ or $\{i_{3}, i_{4}\}=\{i_{2}, l\},\ l\in T_1\setminus\{i_2\}, $ and the order of $i_3,i_4$ is uniquely determined. In fact, by the definition of $\theta_{i,j}$,  we have $\theta_{i_{3}, i_{4}}+\theta_{i_{4}, i_{3}}=2\pi$, if $\theta_{i_{3}, i_{4},\gamma}\in B,\ \theta_{i_{4}, i_{3},\gamma}\in B$ then we have $n^{\gamma}\theta_{i_{3}, i_{4}},n^{\gamma}\theta_{i_{4}, i_{3}}\in A\subset(0,c_1),$ and $\theta_{i_{3}, i_{4}}+\theta_{i_{4}, i_{3}}<2n^{-\gamma}c_1=2c_n<2\pi, $ a contradiction.

 Thus for $\theta_{i_{1}, i_{2},\gamma}\in B, $ the number of $(i_3,i_4)$ satisfying $\theta_{i_{3}, i_{4},\gamma}\in B, \ \{i_{1}, i_{2}\}\cap \{i_{3}, i_{4}\}\neq \emptyset,\ \{i_{1}, i_{2}\}\neq \{i_{3}, i_{4}\}$ is at most $|T_2\setminus\{i_1\}|+|T_1\setminus\{i_2\}|=|T_2|-1+|T_1|-1\leq 2(a-1). $
 Now there are $\widetilde{\chi}^{(n)}(B) $ choices of $(i_{1},i_{2}),$ for fixed $(i_{1},i_{2}) $ there are at most $2(a-1) $ choices of $(i_3,i_4) $ and $\widetilde{\chi}^{(n)}(B) $ choices of $(i_{2l-1},i_{2l}),\ 3\leq l\leq k, $ to satisfy $ (i_1,\cdots,i_{2k})\in Y_{1,2,B} ,$ thus we have \begin{align*}|Y_{1,2,B}|\leq \widetilde{\chi}^{(n)}(B)\times2(a-1)\times\widetilde{\chi}^{(n)}(B)^{k-2}=2(a-1)\widetilde{\chi}^{(n)}(B)^{k-1}.
\end{align*}By \eqref{rdg} and \eqref{38}, we have \begin{align*}0&\leq\frac{(\widetilde{\chi}^{(n)}(B))!}{(\widetilde{\chi}^{(n)}(B)-k)!}-\rho^{(k,n)}(B^k)=|X_{1,B}|-| X_{2,B}|\leq k(k-1)|Y_{1,2,B}|/2
\\&\leq k(k-1)(a-1)(\widetilde{\chi}^{(n)}(B))^{k-1},
\end{align*}which is the second inequality.

The third inequality follows from the second inequality and the fact that\begin{align*}&\frac{(\widetilde{\chi}^{(n)}(B))!}{(\widetilde{\chi}^{(n)}(B)-k)!}
=\prod_{j=0}^{k-1}(\widetilde{\chi}^{(n)}(B)-j)=(\widetilde{\chi}^{(n)}(B))^k\prod_{j=0}^{k-1}(1-j/\widetilde{\chi}^{(n)}(B))\\ \geq& (\widetilde{\chi}^{(n)}(B))^k\left(1-\sum_{j=0}^{k-1}j/\widetilde{\chi}^{(n)}(B)\right)\\=&(\widetilde{\chi}^{(n)}(B))^k-k(k-1) (\widetilde{\chi}^{(n)}(B))^{k-1}/2,
\end{align*}this completes the proof.\end{proof}
%In this section, we will prove \eqref{18} and  Lemma \ref{lem4}.

\section{Integral inequalities}\label{ii}
 In this section, we will  prove one integral lemma regarding the upper and lower bounds of the integration of the joint density on the neighborhood around one variable. As a direct consequence, we can derive
 several integral inequalities about the two-component log-gas.
 %To see how to apply this lemma, we may refer to the proof of Lemma \ref{lem11} which  is also a good example to illustrate one reason why we need to consider
 %the two-component log-gas as in \eqref{c1}\eqref{c2}\eqref{c3}.
 \subsection{Integral lemma}
 We first prove the following lemma which will be applied many times in the whole proof.
  \begin{lem}\label{lem10} Let $m,n,\beta$
be positive integers with $m\leq n$. Given any $c$ such that $n\beta c\in(0,1)$ and $\theta_j\in\mathbb{R}$, $j=1,\cdots, m$, we define $$F(x)=\prod_{j=1}^m(e^{ix}-e^{i\theta_j}),$$ then we have\begin{align*}& \left(\frac{\sin(c/2)}{c/2}\right)^{\beta}\cos(n\beta c)\frac{c^{\beta+1}}{\beta+1}\int_{-\pi}^{\pi}d x_1|F(x_1)|^{2\beta}\\ \leq&
\int_{-\pi}^{\pi}d x_1\int_{x_1}^{x_1+c}d x_2|e^{ix_1}-e^{ix_2}|^{\beta}|F(x_1)|^{\beta}|F(x_2)|^{\beta}\\ \leq& \frac{c^{\beta+1}}{\beta+1}\int_{-\pi}^{\pi}d x_1|F(x_1)|^{2\beta},
\end{align*}and for $k\geq 1,$ we have \begin{align*}&
\int_{-\pi}^{\pi}d x_1\int_{(x_1,x_1+c)^{k-1}}d x_2\cdots dx_k\prod_{1\leq j<l\leq k}|e^{ix_j}-e^{ix_l}|^{\beta}\prod_{j=1}^k|F(x_j)|^{\beta}\\ &\leq c^{\beta k(k-1)/2+k-1}\int_{-\pi}^{\pi}d x_1|F(x_1)|^{k\beta}.
\end{align*}For intervals $A\subset(0,c),\ I\subset(-\pi,\pi),$ we denote $$\varphi({\beta},A):=\int_A|1-e^{iu}|^{\beta}du, $$ then we have\begin{align*}&
\left|\int_{I}d x_1\int_{x_1+A}d x_2|e^{ix_1}-e^{ix_2}|^{\beta}|F(x_1)|^{\beta}|F(x_2)|^{\beta}-\varphi({\beta},A)\int_{I}d x_1|F(x_1)|^{2\beta}\right|\\& \leq \varphi({\beta},A)(n\beta c)\int_{-\pi}^{\pi}d x_1|F(x_1)|^{2\beta}
\end{align*}and\begin{align*}&
\ \ \left(\frac{\sin(c/2)}{c/2}\right)^{\beta }\int_Au^{\beta}du\leq\varphi({\beta},A)\leq\int_Au^{\beta}du.
\end{align*}\end{lem}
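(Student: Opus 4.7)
The plan is to handle the four assertions separately, unified through the substitution $u = x_2 - x_1$ and a Fourier analysis of the shift autocorrelation
\[T(u) := \int_{-\pi}^{\pi}|F(x)|^\beta|F(x+u)|^\beta\,dx, \quad T(0) = \int_{-\pi}^{\pi}|F|^{2\beta}\,dx.\]

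First I would dispose of (d) and the upper bounds. For (d): write $|1-e^{iu}|^\beta = u^\beta(\sin(u/2)/(u/2))^\beta$ and use the monotonicity of $\sin(u/2)/(u/2)$ on $(0,\pi)$ to get $(\sin(c/2)/(c/2))^\beta u^\beta \leq |1-e^{iu}|^\beta \leq u^\beta$ pointwise on $A \subset (0,c)$, then integrate. The upper bound in (a) follows from the AM-GM estimate $|F(x_1)|^\beta|F(x_2)|^\beta \leq (|F(x_1)|^{2\beta}+|F(x_2)|^{2\beta})/2$ combined with $|e^{ix_1}-e^{ix_2}|^\beta \leq |x_1-x_2|^\beta$ and $2\pi$-periodicity of $|F|^{2\beta}$. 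For the $k$-variable upper bound, apply $\prod_{j<l}|e^{ix_j}-e^{ix_l}|^\beta \leq c^{\beta k(k-1)/2}$ since all variables lie in a length-$c$ interval; then use the power-mean inequality $\prod_{j=2}^k|F(x_j)|^\beta \leq (k-1)^{-1}\sum_{j=2}^k|F(x_j)|^{(k-1)\beta}$ to reduce to a two-variable problem, which is closed off by the weighted AM-GM $|F(x_1)|^\beta|F(x_j)|^{(k-1)\beta} \leq k^{-1}(|F(x_1)|^{k\beta}+(k-1)|F(x_j)|^{k\beta})$ and periodicity, yielding the $c^{k-1}\int|F|^{k\beta}$ factor.

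The heart of the argument is the multiplicative comparison $T(u) \geq \cos(n\beta u)\,T(0)$ on $[0,c]$. The key observation is that $P(x) := F(x)^\beta$ is an honest polynomial in $e^{ix}$ of degree $m\beta \leq n\beta$, say $P(x) = \sum_{j=0}^{m\beta}p_j e^{ijx}$, with $|P(x)| = |F(x)|^\beta$. Direct orthogonality gives
\[
\int_{-\pi}^{\pi} P(x)\overline{P(x+u)}\,dx = 2\pi\sum_{j=0}^{m\beta}|p_j|^2\,e^{-iju},
\]
and the triangle inequality $T(u) = \int|P(x)\overline{P(x+u)}|\,dx \geq |\int P(x)\overline{P(x+u)}\,dx|$ combined with phase extraction (factor out $e^{-im\beta u/2}$ and take real parts) gives
\[
\Bigl|\sum_j|p_j|^2 e^{-iju}\Bigr| \geq \sum_j|p_j|^2\cos\bigl((m\beta/2 - j)u\bigr) \geq \cos(m\beta u/2)\sum_j|p_j|^2,
\]
using $|m\beta/2 - j| \leq m\beta/2$ for $j \in [0,m\beta]$ and the monotonicity of $\cos$ on $[0,\pi]$ (valid since $n\beta c < 1 < \pi$). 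Since $\sum_j|p_j|^2 = T(0)/(2\pi)$ by Parseval, this yields $T(u) \geq \cos(m\beta u/2)T(0) \geq \cos(n\beta u)T(0)$. The lower bound in (a) then follows by multiplying with the pointwise bound from (d) and using $\cos(n\beta u) \geq \cos(n\beta c)$ on $[0,c]$.

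For (c), split the integrand as $|F(x_1)|^\beta|F(x_2)|^\beta = |F(x_1)|^{2\beta} + |F(x_1)|^\beta(|F(x_1+u)|^\beta - |F(x_1)|^\beta)$; the main term contributes exactly $\varphi(\beta,A)\int_I|F|^{2\beta}\,dx_1$, and Cauchy-Schwarz bounds the error by $\sqrt{T(0)}\,\|h - h_u\|_{L^2}$ where $h := |F|^\beta$ and $h_u(x) := h(x+u)$. The quadratic version of the Fourier inequality, $\|h-h_u\|_{L^2}^2 = 2(T(0)-T(u)) \leq 2(1-\cos(m\beta u/2))T(0) \leq (m\beta u/2)^2 T(0)$, combined with integration against $|1-e^{iu}|^\beta$ on $A$ using $u \leq c$, produces the $\varphi(\beta,A)(n\beta c)\int|F|^{2\beta}$ error. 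The main technical subtlety I expect is that $h = |F|^\beta$ is not itself a trigonometric polynomial when $\beta$ is odd; this is circumvented by passing through the holomorphic polynomial $P = F^\beta$ and applying the triangle inequality on $\int P\overline{P_u}\,dx$ rather than attempting a direct Fourier expansion of $h$.
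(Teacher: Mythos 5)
Your proposal is correct and follows essentially the same route as the paper: Parseval applied to the genuine trigonometric polynomial $F^{\beta}$ to control the autocorrelation $T(u)$ from below, AM--GM plus $2\pi$-periodicity for the upper bounds, Cauchy--Schwarz together with $\|h-h_u\|_{L^2}^2=2(T(0)-T(u))\leq (n\beta c)^2T(0)$ for the third inequality, and monotonicity of $\sin x/x$ for the fourth. The only cosmetic differences are your phase-centering at $m\beta/2$ (the paper just takes the real part directly, getting $\cos(jt)\geq\cos(n\beta c)$) and your two-step AM--GM for the $k$-variable bound (the paper applies a single $k$-term AM--GM); both yield the stated constants.
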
\begin{proof}We can write $$F(x)^{\beta}=\sum\limits_{j=0}^{m\beta}a_je^{ijx}.$$ A change of variables $x_2=x_1+t$ shows \begin{align}\label{37}&\int_{-\pi}^{\pi}d x_1\int_{x_1}^{x_1+c}d x_2|e^{ix_1}-e^{ix_2}|^{\beta}|F(x_1)|^{\beta}|F(x_2)|^{\beta}\\ \nonumber
=&\int_0^cdt\int_{-\pi}^{\pi}|1-e^{it}|^{\beta}|F(x_1)|^{\beta}|F(x_1+t)|^{\beta}d x_1.
\end{align}As\begin{align*}&F(x_1)^{\beta}=\sum\limits_{j=0}^{m\beta}a_je^{ijx_1},\ \ F(x_1+t)^{\beta}=\sum\limits_{j=0}^{m\beta}a_je^{ijt}e^{ijx_1},
\end{align*}by Parseval's theorem, we have \begin{align*}&\int_{-\pi}^{\pi}\overline{F(x_1)^{\beta}}F(x_1+t)^{\beta}d x_1=2\pi\sum_{j=0}^{m\beta}\overline{a_j}a_je^{ijt}=2\pi\sum_{j=0}^{m\beta}|{a_j}|^2e^{ijt}
\end{align*}and\begin{align*}&\int_{-\pi}^{\pi}|F(x_1)|^{2\beta}d x_1=\int_{-\pi}^{\pi}|F(x_1)^{\beta}|^2d x_1=2\pi\sum_{j=0}^{m\beta}|{a_j}|^2.
\end{align*}Thus for $t\in(0,c),\ 0\leq j\leq m\beta\leq n\beta$, we have $0\leq jt\leq n\beta c<1$ and\begin{align}\label{35}&\int_{-\pi}^{\pi}|F(x_1)|^{\beta}|F(x_1+t)|^{\beta}d x_1\\\nonumber\geq&\text{Re}\int_{-\pi}^{\pi}\overline{F(x_1)^{\beta}}F(x_1+t)^{\beta}d x_1\\ \nonumber=&2\pi\sum_{j=0}^{m\beta}|{a_j}|^2(\cos{jt})\geq 2\pi\sum_{j=0}^{m\beta}|{a_j}|^2\cos({n\beta c})\\ \nonumber=&\cos({n\beta c})\int_{-\pi}^{\pi}|F(x_1)|^{2\beta}d x_1,
\end{align}integrating for $t\in(0,c)$ gives\begin{align*}&\int_{-\pi}^{\pi}d x_1\int_{x_1}^{x_1+c}d x_2|e^{ix_1}-e^{ix_2}|^{\beta}|F(x_1)|^{\beta}|F(x_2)|^{\beta}\\
\geq&\int_0^cdt|1-e^{it}|^{\beta}\cos({n\beta c})\int_{-\pi}^{\pi}|F(x_1)|^{2\beta}d x_1.
\end{align*}As $(\sin x)/x$ is decreasing for $x\in (0,1)$ and $0<c\leq n\beta c<1$, we further have\begin{align*}&\int_0^cdt|1-e^{it}|^{\beta}=\int_0^cdt|2\sin (t/2)|^{\beta}\geq \int_0^cdt\left|t\frac{\sin(c/2)}{c/2}\right|^{\beta}=\frac{c^{\beta+1}}{\beta+1}\left(\frac{\sin(c/2)}{c/2}\right)^{\beta}.
\end{align*}Therefore, we have \begin{align*}&\int_{-\pi}^{\pi}d x_1\int_{x_1}^{x_1+c}d x_2|e^{ix_1}-e^{ix_2}|^{\beta}|F(x_1)|^{\beta}|F(x_2)|^{\beta}\\
\geq&\frac{c^{\beta+1}}{\beta+1}\left(\frac{\sin(c/2)}{c/2}\right)^{\beta}\cos({n\beta c})\int_{-\pi}^{\pi}|F(x_1)|^{2\beta}d x_1,
\end{align*}which is the lower bound in the first inequality.

On the other hand, since $F$ is $2\pi$-perodic, for $t\in(0,c)$, we have \begin{align*}&0\leq\int_{-\pi}^{\pi}\left||F(x_1)|^{\beta}-|F(x_1+t)|^{\beta}\right|^2d x_1\\&=\int_{-\pi}^{\pi}(|F(x_1)|^{2\beta}+|F(x_1+t)|^{2\beta})d x_1-2\int_{-\pi}^{\pi}|F(x_1)|^{\beta}|F(x_1+t)|^{\beta}d x_1\\&=2\int_{-\pi}^{\pi}|F(x_1)|^{2\beta}d x_1-2\int_{-\pi}^{\pi}|F(x_1)|^{\beta}|F(x_1+t)|^{\beta}d x_1,
\end{align*}which implies\begin{align}\label{36}&\int_{-\pi}^{\pi}|F(x_1)|^{\beta}|F(x_1+t)|^{\beta}dx_1 \leq\int_{-\pi}^{\pi}|F(x_1)|^{2\beta}d x_1,
\end{align}and using \eqref{35} and $2-2\cos({n\beta c})\leq ({n\beta c})^2 $, we also have \begin{align}\label{34}\int_{-\pi}^{\pi}\left||F(x_1)|^{\beta}-|F(x_1+t)|^{\beta}\right|^2d x_1\leq({n\beta c})^2\int_{-\pi}^{\pi}|F(x_1)|^{2\beta}d x_1.
\end{align}By \eqref{37} and \eqref{36}, we have\begin{align*}&\int_{-\pi}^{\pi}d x_1\int_{x_1}^{x_1+c}d x_2|e^{ix_1}-e^{ix_2}|^{\beta}|F(x_1)|^{\beta}|F(x_2)|^{\beta}\\
\leq&\int_0^cdt|1-e^{it}|^{\beta}\int_{-\pi}^{\pi}|F(x_1)|^{2\beta}d x_1\\
\leq&\int_0^cdt|t|^{\beta}\int_{-\pi}^{\pi}|F(x_1)|^{2\beta}d x_1\\=&\frac{c^{\beta+1}}{\beta+1}\int_{-\pi}^{\pi}d x_1|F(x_1)|^{2\beta},
\end{align*}which gives the upper bound in the first inequality.

 If $x_j\in(x_1,x_1+c)$ for $1<j\leq k,$ then we have $|e^{ix_j}-e^{ix_l}|\leq|x_j-x_l|<c$ for $ 1\leq j<l\leq k,$ therefore,\begin{align*}&
\int_{-\pi}^{\pi}d x_1\int_{(x_1,x_1+c)^{k-1}}d x_2\cdots dx_k\prod_{1\leq j<l\leq k}|e^{ix_j}-e^{ix_l}|^{\beta}\prod_{j=1}^k|F(x_j)|^{\beta}\\ \leq&
\int_{-\pi}^{\pi}d x_1\int_{(x_1,x_1+c)^{k-1}}d x_2\cdots dx_k\prod_{1\leq j<l\leq k}c^{\beta}\prod_{j=1}^k|F(x_j)|^{\beta}\\ =&
c^{\beta k(k-1)/2}\int_{(0,c)^{k-1}}d t_2\cdots dt_k\int_{-\pi}^{\pi}d x_1\prod_{j=1}^k|F(x_1+t_j)|^{\beta}\\ \leq&
\frac{c^{\beta k(k-1)/2}}{k}\int_{(0,c)^{k-1}}d t_2\cdots dt_k\int_{-\pi}^{\pi}d x_1\sum_{j=1}^k|F(x_1+t_j)|^{k\beta}\\ =&
\frac{c^{\beta k(k-1)/2}}{k}\sum_{j=1}^k\int_{(0,c)^{k-1}}d t_2\cdots dt_k\int_{-\pi}^{\pi}d x_1|F(x_1)|^{k\beta}\\ =& c^{\beta k(k-1)/2+k-1}\int_{-\pi}^{\pi}d x_1|F(x_1)|^{k\beta},
\end{align*}which is the second inequality, here we denote $t_1=0.$

By changing of variables, the definition of $\varphi({\beta},A)$, H\"older inequality and \eqref{34}, we have\begin{align*}&
\left|\int_{I}d x_1\int_{x_1+A}d x_2|e^{ix_1}-e^{ix_2}|^{\beta}|F(x_1)|^{\beta}|F(x_2)|^{\beta}-\varphi({\beta},A)\int_{I}d x_1|F(x_1)|^{2\beta}\right|\\=&\left|\int_A du\int_{I}d x_1|1-e^{iu}|^{\beta}|F(x_1)|^{\beta}|F(x_1+u)|^{\beta} -\int_A |1-e^{iu}|^{\beta}du\int_{I}d x_1|F(x_1)|^{2\beta}\right|\\ \leq & \int_A du\int_{I}d x_1|1-e^{iu}|^{\beta}|F(x_1)|^{\beta}\left||F(x_1+u)|^{\beta}-|F(x_1)|^{\beta}\right|\\ \leq & \int_A du|1-e^{iu}|^{\beta}\left(\int_{I}d x_1|F(x_1)|^{2\beta}\right)^{\frac{1}{2}}\times \left(\int_{I}d x_1\left||F(x_1+u)|^{\beta}-|F(x_1)|^{\beta}\right|^2\right)^{\frac{1}{2}}\\ \leq  & \int_A du|1-e^{iu}|^{\beta}\left(\int_{-\pi}^{\pi}d x_1|F(x_1)|^{2\beta}\right)^{\frac{1}{2}}\left(({n\beta c})^2\int_{-\pi}^{\pi}|F(x_1)|^{2\beta}d x_1\right)^{\frac{1}{2}}\\=& \varphi({\beta},A)(n\beta c)\int_{-\pi}^{\pi}d x_1|F(x_1)|^{2\beta},
\end{align*}which is the third inequality.

As $(\sin x)/x$ is decreasing for $x\in (0,1)$ and $$A\subset(0,c)\subset(0,1),$$ we have\begin{align*}\varphi({\beta},A)&=\int_A|1-e^{iu}|^{\beta}du=\int_A|2\sin (u/2)|^{\beta}du \\&\geq \int_A\left|u\frac{\sin(c/2)}{c/2}\right|^{\beta}du=\left(\frac{\sin(c/2)}{c/2}\right)^{\beta}\int_Au^{\beta}du,
\end{align*}and as $ |1-e^{iu}|\leq u,$ we also have\begin{align*}&\varphi({\beta},A)=\int_A|1-e^{iu}|^{\beta}du\leq\int_Au^{\beta}du,
\end{align*}which gives the fourth inequality. This completes the proof.\end{proof}

\subsection{Inequalities regarding two-component log-gas}
Let  $B=(0,c_0)\times (-\pi,\pi),\ n>2k,$ by definition of $\rho^{(k,n,\gamma)}$ (recall \eqref{rhoo}), we have \begin{equation}\label{times}\mathbb{E}\rho^{(k,n,\gamma)}(B^k)=\frac{n!}{(n-2k)!}\int_{\Sigma_{n,k,c}}J(\theta_1,\cdots, \theta_n)d\theta_1\cdots d\theta_n\Big|_{c=c_0/n^{\gamma}},
\end{equation}here \begin{align*}\numberthis \label{sigma}\Sigma_{n,k,c}=\big\{&(\theta_1,\cdots, \theta_n):\theta_j\in(-\pi,\pi),\forall\ 1\leq j\leq n-k,\\& \theta_j-\theta_{j-k}\in(0,c),\forall\ n-k< j\leq n\big\}.
\end{align*}

For $0\leq l\leq k$, with assumptions in Lemma \ref{lem10},  we denote \begin{equation}\label{enkl}{E_{\beta,n,k,l}}(c):=\int_{\Sigma_{n-l,k-l,c}}d\theta_1\cdots d\theta_{n-l}
\prod_{j<m}|e^{i\theta_j}-e^{i\theta_m}|^{q_jq_m\beta}\Big|_{q_s=1+\chi_{\{s\leq l\}}}.
\end{equation}%where $\Sigma_{n-l,k-l,c}$ is as defined in \eqref{sigma}.

Then we have\begin{align*}&\int_{\Sigma_{n,k,c}}J(\theta_1,\cdots, \theta_n)d\theta_1\cdots d\theta_n=\frac{{E_{\beta,n,k,0}}(c)}{C_{\beta,n}},
\end{align*}and by definition we can check that $${E_{\beta,n,k,k}}(c)=C_{\beta,n-2k,k}.
$$ We need to show that (for $0< n\beta c<1$)\begin{align}\label{21}&\left(\frac{\sin(c/2)}{c/2}\right)^{\beta}\cos(n\beta c)\frac{c^{\beta+1}}{\beta+1}\leq\frac{{E_{\beta,n,k,l-1}}(c)}{{E_{\beta,n,k,l}}(c)}\leq\frac{c^{\beta+1}}{\beta+1}.
\end{align}In fact, after changing the order of variables, we can write\begin{align*}&{E_{\beta,n,k,l-1}}(c)=\int_{\Sigma_{n-l-1,k-l,c}}d\theta_1\cdots d\theta_{n-l-1}
\prod_{1\leq j<m\leq n-l-1}|e^{i\theta_j}-e^{i\theta_m}|^{q_jq_m\beta}\\ &\times\int_{-\pi}^{\pi}d x_1\int_{x_1}^{x_1+c}d x_2|e^{ix_1}-e^{ix_2}|^{\beta}\prod_{j=1}^2\prod_{m=1}^{n-l-1}|e^{ix_j}-e^{i\theta_m}|^{q_m\beta}\Big|_{q_s=1+\chi_{\{s\leq l-1\}}},
\end{align*}and \begin{align*}&{E_{\beta,n,k,l}}(c)=\int_{\Sigma_{n-l-1,k-l,c}}d\theta_1\cdots d\theta_{n-l-1}
\prod_{1\leq j<m\leq n-l-1}|e^{i\theta_j}-e^{i\theta_m}|^{q_jq_m\beta}\\ &\times\int_{-\pi}^{\pi}d x_1\prod_{m=1}^{n-l-1}|e^{ix_1}-e^{i\theta_m}|^{2q_m\beta}\Big|_{q_s=1+\chi_{\{s\leq l-1\}}},
\end{align*}then \eqref{21} is the direct consequence of Lemma \ref{lem10} by taking  $$F(x)=\prod_{m=1}^{n-l-1}(e^{ix}-e^{i\theta_m})^{q_m}.$$

 By \eqref{21} we finally have the following two estimates \begin{align}\label{22}&{{E_{\beta,n,k,l}}(c)}\leq\left(\frac{c^{\beta+1}}{\beta+1}\right)^{k-l}{E_{\beta,n,k,k}}(c)
=\left(\frac{c^{\beta+1}}{\beta+1}\right)^{k-l}C_{\beta,n-2k,k}
\end{align}and\begin{align}\label{23}&\left(\frac{\sin(c/2)}{c/2}\right)^{k\beta}\left(\cos(n\beta c)\right)^{k}\left(\frac{c^{\beta+1}}{\beta+1}\right)^k{C_{\beta,n-2k,k}}\leq{{E_{\beta,n,k,0}}(c)}.
\end{align}

\section{No successive small gaps}
In this section, we will prove Lemma \ref{lem4} which implies that there is no successive smallest gaps.   We first need the following estimate.
 \begin{lem}\label{lem11} For $B=(0,c_0)\times (-\pi,\pi),\ n\geq k>1,\ n^{1-\gamma}\beta c_0\in(0,1)$, we have \begin{align*}&\mathbb{E}\widetilde{\chi}^{(n,\gamma,k-1)}(B)\leq n(n^{1-\gamma}\beta c_0)^{\beta k(k-1)/2+k-1}.
\end{align*}\end{lem}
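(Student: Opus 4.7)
The plan is to reduce the expectation to the two-component Selberg constant $C_{\beta,n-k,(k)}$ via Lemma \ref{lem10}, then apply Lemma \ref{lem7}. Set $c := c_0 n^{-\gamma}$, so the hypothesis gives $n\beta c \in (0,1)$. By definition, $\widetilde{\chi}^{(n,\gamma,k-1)}(B)$ counts the indices $i$ with $\theta_{(i+k-1)} - \theta_{(i)} < c$, equivalently the cyclically consecutive $k$-windows of ordered points having diameter less than $c$. Since there are only $n$ such windows among the $\binom{n}{k}$ total $k$-subsets of $\{1,\ldots,n\}$, I would dominate the count by the sum over all subsets and apply exchangeability of the $\theta_j$ to obtain
\[
\mathbb{E}\widetilde{\chi}^{(n,\gamma,k-1)}(B) \leq \binom{n}{k}\, \mathbb{P}\bigl(\operatorname{diam}(\theta_1,\ldots,\theta_k) < c\bigr).
\]
Conditioning on which of $\theta_1, \ldots, \theta_k$ is the leftmost point of the spanning arc, and using exchangeability once more, then yields
\[
\mathbb{E}\widetilde{\chi}^{(n,\gamma,k-1)}(B) \leq n\binom{n-1}{k-1}\, \mathbb{P}\bigl(\theta_2, \ldots, \theta_k \in (\theta_1, \theta_1+c)\bigr).
\]

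Next I would write the remaining probability as an integral against $J(\theta_1, \ldots, \theta_n)$ and factor the Vandermonde product to separate the \emph{cluster} $\theta_1, \ldots, \theta_k$ from the \emph{bulk} $\theta_{k+1}, \ldots, \theta_n$. With $F(x) := \prod_{j=k+1}^n (e^{ix}-e^{i\theta_j})$, the second inequality of Lemma \ref{lem10}, whose hypothesis $n\beta c\in(0,1)$ is exactly ours, bounds the inner cluster integral by
\[
c^{\beta k(k-1)/2 + k-1} \int_{-\pi}^{\pi} |F(\theta_1)|^{k\beta} \, d\theta_1,
\]
which promotes $\theta_1$ to a charge-$k$ variable interacting with the $n-k$ remaining charge-$1$ variables. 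Reabsorbing the bulk Vandermonde, the resulting integral is, by definition \eqref{c3}, precisely $C_{\beta, n-k, (k)}$, so that
\[
\mathbb{P}\bigl(\theta_2, \ldots, \theta_k \in (\theta_1, \theta_1+c)\bigr) \leq \frac{C_{\beta, n-k, (k)}}{C_{\beta,n}}\, c^{\beta k(k-1)/2 + k-1}.
\]

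For the closing step, the first inequality of Lemma \ref{lem7} gives $C_{\beta,n-k,(k)}/C_{\beta,n} \leq (n\beta)^{k(k-1)\beta/2}$, while $\binom{n-1}{k-1} \leq n^{k-1} \leq (n\beta)^{k-1}$ absorbs the binomial factor; collecting powers and writing $n\beta c = n^{1-\gamma}\beta c_0$ produces the claimed bound $n(n^{1-\gamma}\beta c_0)^{\beta k(k-1)/2 + k-1}$. Since the heavy analytic work is already packaged in Lemmas \ref{lem7} and \ref{lem10}, I expect no serious obstacle; the main bookkeeping is verifying that the integral emerging after Lemma \ref{lem10} genuinely fits the $C_{\beta, n-k, (k)}$ template, which amounts to checking that each exponent $|e^{i\theta_1} - e^{i\theta_j}|^{k\beta}$ for $j > k$ is exactly what the charge-$k$ prescription requires.
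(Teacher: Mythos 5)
Your proposal is correct and follows essentially the same route as the paper: the same combinatorial reduction (your $n\binom{n-1}{k-1}$ prefactor is exactly the paper's $\tfrac{1}{(k-1)!}\tfrac{n!}{(n-k)!}$, obtained there by dominating $\widetilde{\chi}^{(n,\gamma,k-1)}(B)$ by $\tfrac{1}{(k-1)!}\xi^{(n,k)}(\Lambda_{k,c_n})$), the same application of the second inequality of Lemma \ref{lem10} to collapse the cluster integral onto $C_{\beta,n-k,(k)}$, and the same closing appeal to the first inequality of Lemma \ref{lem7}. The only cosmetic difference is that the paper keeps the factor $\tfrac{1}{(k-1)!\beta^{k-1}}$ and discards it at the end, while you absorb $\binom{n-1}{k-1}\le (n\beta)^{k-1}$ directly; both yield the stated bound.
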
\begin{proof}We consider the point process\begin{align*}&\xi^{(n)}=\sum_{i=1}^n\delta_{\theta_{i}},\ \ \ \ \xi^{(n,k)}=\sum_{i_1,\cdots,i_{k}\ \text{all distinct}}\delta_{(\theta_{i_1},\cdots,\theta_{i_{k}})}.
\end{align*}For $B=(0,c_0)\times (-\pi,\pi),\ n\geq k>1,$ let $c_n=c_0/n^{\gamma},$ then we have\begin{align*}&\widetilde{\chi}^{(n,\gamma,j)}(B)=\sum_{i=1}^n \bold 1_{\xi^{(n)}(\theta_i+(0,c_n))\geq j}\leq\frac{1}{j!}{\xi^{(n,j+1)}(\Lambda_{j+1,c_n})},
\end{align*}here, the angles are modulo $2\pi$, $\bold 1$ is the indicator of an event and we define \begin{align*}\Lambda_{k,c}=\big\{&(\theta_1,\cdots, \theta_k):\theta_1\in(-\pi,\pi),\ \theta_j-\theta_{1}\in(0,c),\forall\ 1< j\leq k\big\}.
\end{align*} Let\begin{align*}\Lambda_{k,c,n}=\big\{&(\theta_1,\cdots, \theta_n):\theta_j\in(-\pi,\pi),\forall\ 1\leq j\leq n-k+1,\\ &\theta_j-\theta_{n-k+1}\in(0,c),\forall\ n-k+1< j\leq n\big\},
\end{align*}then by Lemma \ref{lem7} and Lemma \ref{lem10}, we have\begin{align*}&\mathbb{E}\widetilde{\chi}^{(n,\gamma,k-1)}(B)\leq\frac{1}{(k-1)!}\mathbb{E}{\xi^{(n,k)}(\Lambda_{k,c_n})}\\
=&\frac{1}{(k-1)!}\frac{n!}{(n-k)!}\int_{\Lambda_{k,c_n,n}}J(\theta_1,\cdots, \theta_n)d\theta_1\cdots d\theta_n\\
=&\frac{1}{(k-1)!}\frac{n!}{(n-k)!}\frac{1}{C_{\beta,n}}\int_{-\pi}^{\pi}d\theta_1\cdots \int_{-\pi}^{\pi}d\theta_{n-k}\prod_{1\leq j<m\leq n-k}|e^{i\theta_j}-e^{i\theta_m}|^{\beta}\\ &\times\int_{\Lambda_{k,c_n}}d x_1\cdots dx_k\prod_{1\leq j<m\leq k}|e^{ix_j}-e^{ix_m}|^{\beta}\prod_{j=1}^k\prod_{m=1}^{n-k}|e^{ix_j}-e^{i\theta_m}|^{\beta}\\
\leq&\frac{n^k}{(k-1)!}\frac{1}{C_{\beta,n}}\int_{-\pi}^{\pi}d\theta_1\cdots \int_{-\pi}^{\pi}d\theta_{n-k}\prod_{1\leq j<m\leq n-k}|e^{i\theta_j}-e^{i\theta_m}|^{\beta}\\ &\times c_n^{\beta k(k-1)/2+k-1}\int_{-\pi}^{\pi}d x_1\prod_{m=1}^{n-k}|e^{ix_1}-e^{i\theta_m}|^{k\beta}\\=&\frac{n^k}{(k-1)!}\frac{C_{\beta,n-k,(k)}}{C_{\beta,n}}c_n^{\beta k(k-1)/2+k-1}\\ \leq& \frac{n^k}{(k-1)!}(n\beta)^{k(k-1)\beta/2}c_n^{\beta k(k-1)/2+k-1}=\frac{n(n\beta c_n)^{\beta k(k-1)/2+k-1}}{(k-1)!\beta^{k-1}}\\ \leq &n(n\beta c_n)^{\beta k(k-1)/2+k-1}=n(n^{1-\gamma}\beta c_0)^{\beta k(k-1)/2+k-1},
\end{align*}this completes the proof.\end{proof}

Now we can  give the proof of Lemma \ref{lem4}.\begin{proof}Let $c$ be such that $A\subset(0,c)$, and $B=(0,c)\times (-\pi,\pi),\ \gamma=\dfrac{\beta+2}{\beta+1}.$ Then by definitions \eqref{chi} and \eqref{chit},  $\chi^{(n)}(A\times I)-\widetilde{\chi}^{(n)}(A\times I)\neq 0$ implies $\widetilde{\chi}^{(n,\gamma,j)}(A\times I)>0 $ for some $j>1$,  and thus we must have $\widetilde{\chi}^{(n,\gamma,2)}(B)>0. $ Since $ \gamma>1,$ for $n$ large enough we have $ n^{1-\gamma}\beta c\in(0,1),$ and by Lemma \ref{lem11} with $k=3$, we have \begin{align*}&\mathbb{P}(\chi^{(n)}(A\times I)-\widetilde{\chi}^{(n)}(A\times I)\neq 0)\leq \mathbb{P}(\widetilde{\chi}^{(n,\gamma,2)}(B)>0)\\ \leq& \mathbb{E}(\widetilde{\chi}^{(n,\gamma,2)}(B))\leq n(n^{1-\gamma}\beta c)^{3\beta +2}=n(n^{-\frac{1}{\beta+1}}\beta c)^{3\beta +2}\to 0,
\end{align*}this completes the proof.\end{proof}

%\section{Integral inequalities and proof of  \eqref{18}}\label{18a}
%\section{Integral inequalities of two component log-gas}\label{18a}
%We first use Lemma \ref{lem10} to show that  the uniform bound \eqref{18} is equivalent to
%the uniform $L^k$-bound \eqref{24}, then we use Lemma \ref{lem11}
 %and Minknowski inequality
% to prove \eqref{24}.
%In this section, we will prove several integral inequalities

%Now we can prove the following lemma which will be applied in the proofs of Lemma \ref{lem8} and Lemma \ref{lem9}.

%\subsection{Proof of Lemma \ref{lem4}}
%We can also give the proof of Lemma \ref{lem4} using Lemma \ref{lem11}.\begin{proof}Let $c$ be such that $A\subset(0,c)$, and $B=(0,c)\times (-\pi,\pi),\
%\gamma=\dfrac{\beta+2}{\beta+1}.$ Then $\chi^{(n)}(A\times I)-\widetilde{\chi}^{(n)}(A\times I)\neq 0$ implies $\widetilde{\chi}^{(n,\gamma,j)}(A\times I)>0 $
%for some $j>1$ and $\widetilde{\chi}^{(n,\gamma,2)}(B)>0. $ Since $ \gamma>1,$ for $n$ large enough we have $ n^{1-\gamma}\beta c\in(0,1),$ and by Lemma
%\ref{lem11} we deduce that\begin{align*}&\mathbb{P}(\chi^{(n)}(A\times I)-\widetilde{\chi}^{(n)}(A\times I)\neq 0)\leq \mathbb{P}(\widetilde{\chi}^{(n,\gamma,2)}(B)>0)
%\\ \leq& \mathbb{E}(\widetilde{\chi}^{(n,\gamma,2)}(B))\leq n(n^{1-\gamma}\beta c)^{3\beta +2}=n(n^{-\frac{1}{\beta+1}}\beta c)^{3\beta +2}\to 0,
%\end{align*}this completes the proof.\end{proof}
\section{Proof of  Lemma \ref{lem8}}
In this section, we will prove Lemma \ref{lem8}.
\subsection{Uniform boundedness}
We will first prove the following uniform boundedness which will be applied in the proofs of Lemma \ref{lem8} and Lemma \ref{lem9}.
\begin{lem}
\begin{equation}\label{18} \limsup_{n\to+\infty}\frac{C_{\beta,n-2k,k}}{C_{\beta,n}n^{k\beta}}<+\infty.
\end{equation}\end{lem}
\begin{proof}Let $c_0$ be fixed such that $\beta c_0\in(0,1) $ and $B=(0,c_0)\times (-\pi,\pi).$
Thanks to the integral expression of $\mathbb{E}\rho^{(k,n,\gamma)}(B^k) $ in \eqref{times}, the definition of $E_{\beta,n,k,l}$  \eqref{enkl} and the upper bound \eqref{23}, with $\gamma=1$, we have \begin{align*}&\mathbb{E}\rho^{(k,n,1)}(B^k)=\frac{n!}{(n-2k)!}\frac{E_{\beta,n,k,0}(c)}{C_{\beta,n}}\Big|_{c=c_0/n}
 \\ \geq &\frac{n!}{(n-2k)!}\frac{C_{\beta,n-2k,k}}{C_{\beta,n}}\times
\left(\frac{\sin(c/2)}{c/2}\right)^{k\beta}\left(\cos(n\beta c)\right)^{k}\left(\frac{c^{\beta+1}}{\beta+1}\right)^k\Big|_{c=c_0/n}
\\=&\frac{n!n^{-k}}{(n-2k)!}\frac{C_{\beta,n-2k,k}}{C_{\beta,n}n^{k\beta}}\left(\frac{\sin(c_0/(2n))}{c_0/(2n)}\right)^{k\beta}
\left(\cos(\beta c_0)\right)^{k}\left(\frac{c_0^{\beta+1}}{\beta+1}\right)^k.
\end{align*}By the first inequality in Lemma \ref{lem14}, we have \begin{align*}&\rho^{(k,n,1)}(B^k)\leq\frac{(\widetilde{\chi}^{(n,1)}(B))!}{(\widetilde{\chi}^{(n,1)}(B)-k)!}\leq (\widetilde{\chi}^{(n,1)}(B))^k,
\end{align*}which implies\begin{align*}&\limsup_{n\to+\infty}\mathbb{E}(n^{-1}\widetilde{\chi}^{(n,1)}(B))^k
\\ \geq &\limsup_{n\to+\infty}n^{-k}\mathbb{E}\rho^{(k,n,1)}(B^k)
\\ \geq&\lim_{n\to+\infty}\frac{n!n^{-2k}}{(n-2k)!}\limsup_{n\to+\infty}\frac{C_{\beta,n-2k,k}}{C_{\beta,n}n^{k\beta}}
\left(\cos(\beta c_0)\right)^{k}\left(\frac{c_0^{\beta+1}}{\beta+1}\right)^k\\ =&\limsup_{n\to+\infty}\frac{C_{\beta,n-2k,k}}{C_{\beta,n}n^{k\beta}}
\left(\frac{c_0^{\beta+1}\cos(\beta c_0)}{\beta+1}\right)^k.
\end{align*}Thus, to prove  \eqref{18},  we only need to prove\begin{align}\label{24}&\limsup_{n\to+\infty}\mathbb{E}(n^{-1}\widetilde{\chi}^{(n,1)}(B))^k
<+\infty.
\end{align}As $\widetilde{\chi}^{(n,\gamma)}=\sum\limits_{j=1}^{n-1}\widetilde{\chi}^{(n,\gamma,j)}, $ by Lemma \ref{lem11} (since $\beta c_0\in (0,1)$), we have\begin{align*}\mathbb{E}(n^{-1}\widetilde{\chi}^{(n,1,j)}(B))\leq(\beta c_0)^{\beta j(j+1)/2+j}\leq(\beta c_0)^{j}.
\end{align*}Using $0\leq\widetilde{\chi}^{(n,1,j)}(B)\leq n, $ we have\begin{align*}\mathbb{E}(n^{-1}\widetilde{\chi}^{(n,1,j)}(B))^k\leq\mathbb{E}(n^{-1}\widetilde{\chi}^{(n,1,j)}(B))
\leq(\beta c_0)^{j}.
\end{align*}By Minkowski inequality, we finally have\begin{align*}(\mathbb{E}(n^{-1}\widetilde{\chi}^{(n,1)}(B))^k)^{1/k}&\leq \sum_{j=1}^{n-1}(\mathbb{E}(n^{-1}\widetilde{\chi}^{(n,1,j)}(B))^k)^{1/k}\leq \sum_{j=1}^{n-1}(\beta c_0)^{j/k}\\ &\leq(1-(\beta c_0)^{1/k})^{-1},
\end{align*}thus \eqref{24} is true, so is \eqref{18}.\end{proof}

\subsection{Proof of Lemma \ref{lem8}}
For $B=A\times I$, we will use  Lemma \ref{lem14} to deduce that\begin{align}\label{25}&\lim_{n\to+\infty}\left(\mathbb{E}\frac{(\widetilde{\chi}^{(n)}(B))!}{(\widetilde{\chi}^{(n)}(B)-k)!}
-\mathbb{E}\rho^{(k,n)}(B^k)\right)=0,\ \
\end{align}and use Lemma \ref{lem10} to deduce that\begin{align}\label{26}&
\lim_{n\to+\infty}\left(\mathbb{E}(\rho^{(k,n)}((A\times I)^k))
-\left(\int_Au^{\beta}du\right)^k\frac{C_{\beta,n-2k,k}(I)}{C_{\beta,n}n^{k\beta}}\right)=0,
\end{align}then Lemma \ref{lem8} follows from \eqref{25} and \eqref{26}, here $\rho^{(k,n)}$ is defined in \eqref{rhood}.

Let $ A\subset\mathbb{R}_{+}$ be any bounded interval,  $I\subseteq(-\pi,\pi)$ and $B=A\times I$. Let $c_1$ be such that $A\subset(0,c_1)$, and $B_1=(0,c_1)\times (-\pi,\pi)$ such that  $B\subset B_1.$ We denote $\gamma=\dfrac{\beta+2}{\beta+1}$ and $c_n=c_1/n^{\gamma}$.

Since $ \gamma>1,$ for $n$ large enough we have $ n\beta c_n=n^{1-\gamma}\beta c_1\in(0,1).$ By the expression of $\mathbb{E}\rho^{(k,n,\gamma)}(B^k) $, $E_{\beta,n,k,l} $ and \eqref{22},  with $\gamma(\beta+1)=\beta+2 $, we have \begin{align*}&\mathbb{E}\rho^{(k,n)}(B^k)\leq\mathbb{E}\rho^{(k,n)}(B_1^k)
=\frac{n!}{(n-2k)!}\frac{E_{\beta,n,k,0}(c_n)}{C_{\beta,n}}
\\ \leq&\frac{n!}{(n-2k)!}\frac{C_{\beta,n-2k,k}}{C_{\beta,n}}\left(\frac{c_n^{\beta+1}}{\beta+1}\right)^k\leq n^{2k}
\frac{C_{\beta,n-2k,k}}{C_{\beta,n}}\left(\frac{c_1^{\beta+1}}{\beta+1}\right)^kn^{-\gamma(\beta+1)k}
\\=&n^{2k}\frac{C_{\beta,n-2k,k}}{C_{\beta,n}}\left(\frac{c_1^{\beta+1}}{\beta+1}\right)^kn^{-(\beta+2)k}
=\frac{C_{\beta,n-2k,k}}{C_{\beta,n}n^{k\beta}}\left(\frac{c_1^{\beta+1}}{\beta+1}\right)^{k}.
\end{align*}Using \eqref{18}, we have\begin{align}\label{27}&\limsup_{n\to+\infty}\mathbb{E}\rho^{(k,n)}(B^k)<+\infty.
\end{align}Let $a$ be defined in Lemma \ref{lem14} and assume $n$ large enough such that $ 0<c_n\leq n\beta c_n=n^{1-\gamma}\beta c_1<1/4.$  By definition, we have $0\leq a<n$ and $a\geq k$ is equivalent to $\widetilde{\chi}^{(n,\gamma,k)}(B_2)>0$, here, $a,k\in\mathbb{Z},\ k>0$ and $B_2=(0,2c_1)\times (-\pi,\pi).$

By Lemma \ref{lem11} and $ (1-\gamma)(\beta +1)=-1 $, for $1\leq k<n$, we have\begin{align*}&\mathbb{P}(a\geq k)=\mathbb{P}(\widetilde{\chi}^{(n,\gamma,k)}(B_2)>0) \leq \mathbb{E}(\widetilde{\chi}^{(n,\gamma,k)}(B_2))\\ \leq& n(2n^{1-\gamma}\beta c_1)^{k(k+1)\beta/2 +k}=n(2n^{1-\gamma}\beta c_1)^{\beta +1}(2n^{1-\gamma}\beta c_1)^{(k+2)(k-1)\beta/2 +k-1}\\=&(2\beta c_1)^{\beta +1}(2n^{1-\gamma}\beta c_1)^{(k+2)(k-1)\beta/2 +k-1}\leq(2\beta c_1)^{\beta +1}(1/2)^{k-1}.
\end{align*} Since $\mathbb{P}(a\geq k)=0 $ for $k\geq n,$ thus \begin{align*}&\mathbb{P}(a\geq k)\leq(2\beta c_1)^{\beta +1}(1/2)^{k-1}
\end{align*}is always true for $k\geq 1$.

The above argument also implies that for $k>1,\ k\in \mathbb{Z}$, we must have
\begin{align*}&\lim_{n\to+\infty}\mathbb{P}(a\geq k)=0.
\end{align*}And by dominated convergence theorem, we can further deduce that \begin{align}\label{28}&\lim_{n\to+\infty}\mathbb{E}(a-1)_+^p=0,\ \forall\ p\in(0,+\infty),
\end{align} here, $f_+=\max(f,0).$

By Lemma \ref{lem14}, for any $k\geq 1$, we have $(\widetilde{\chi}^{(n)}(B))^k\leq2\rho^{(k,n)}(B^k)$ or $(\widetilde{\chi}^{(n)}(B))^k\leq2k(k-1)a(\widetilde{\chi}^{(n)}(B))^{k-1},
$ therefore, we have \begin{align*}&(\widetilde{\chi}^{(n)}(B))^k\leq\max(2\rho^{(k,n)}(B^k),(2k(k-1)a)^k)
\end{align*}and\begin{align*}&\mathbb{E}(\widetilde{\chi}^{(n)}(B))^k\leq2\mathbb{E}(\rho^{(k,n)}(B^k))+(2k(k-1))^k\mathbb{E}(a^k).
\end{align*}By \eqref{27} and \eqref{28}, we have  \begin{align}\label{29}&\limsup_{n\to+\infty}\mathbb{E}(\widetilde{\chi}^{(n)}(B))^k<+\infty.
\end{align}By Lemma \ref{lem14}, H\"older inequality, \eqref{28} and \eqref{29}, we have \begin{align*}0&\leq\mathbb{E}\left(\frac{(\widetilde{\chi}^{(n)}(B))!}{(\widetilde{\chi}^{(n)}(B)-k)!}-\rho^{(k,n)}(B^k)\right)
\\&\leq k(k-1)\mathbb{E}((a-1)_+(\widetilde{\chi}^{(n)}(B))^{k-1})\\&\leq k(k-1)(\mathbb{E}((a-1)_+^k))^{1/k}(\mathbb{E}(\widetilde{\chi}^{(n)}(B))^{k}))^{1-1/k}\to0
\end{align*}as $n\to+\infty,$ which implies \eqref{25}.

For $B=A\times I,\ n>2k,\gamma>0$, we have \begin{equation}\label{aaaaa}\mathbb{E}\rho^{(k,n,\gamma)}(B^k)=\frac{n!}{(n-2k)!}\int_{\Sigma_{n,k,cA,I}}J(\theta_1,\cdots, \theta_n)d\theta_1\cdots d\theta_n\Big|_{c=n^{-\gamma}},
\end{equation}here, \begin{align*}\Sigma_{n,k,A,I}=\big\{&(\theta_1,\cdots, \theta_n):\theta_j\in(-\pi,\pi),\forall\ 1\leq j\leq n-2k,\\ &\theta_{j-k}\in I, \theta_j-\theta_{j-k}\in A,\forall\ n-k< j\leq n\big\}.
\end{align*}We denote\begin{align*}&\Sigma_{n,k,A,I,l}=\big\{(\theta_1,\cdots, \theta_{n-l}):\theta_j\in(-\pi,\pi),\forall\ 1\leq j\leq n-2k,\\ &\theta_j\in I,\forall\ n-2k< j\leq n-k,\ \theta_j-\theta_{j-k+l}\in A,\forall\ n-k< j\leq n-l\big\}
\end{align*}and\begin{align*}&{E_{\beta,n,k,l}}(A,I):=\int_{\Sigma_{n,k,A,I,l}}d\theta_1\cdots d\theta_{n-l}
\prod_{j<p}|e^{i\theta_j}-e^{i\theta_p}|^{q_jq_p\beta}\end{align*} with ${q_s=1+\chi_{\{n-2k<s\leq n-2k+l\}}},$ then we have\begin{equation}\label{bbbbb}\int_{\Sigma_{n,k,A,I}}J(\theta_1,\cdots, \theta_n)d\theta_1\cdots d\theta_n=\frac{{E_{\beta,n,k,0}}(A,I)}{C_{\beta,n}}
\end{equation}and $${E_{\beta,n,k,k}}(A,I)=C_{\beta,n-2k,k}(I).
$$ We need inequalities similar to \eqref{21}.\begin{lem}\label{lem12} $ A\subset(0,c)$ and $I\subseteq(-\pi,\pi),$ $n\beta c\in(0,1),$ $n>2k,$ $n,\beta,k$ are positive integers, then we have \begin{align*}&\left|{E_{\beta,n,k,0}}(A,I)-\left(\int_Au^{\beta}du\right)^kC_{\beta,n-2k,k}(I)\right|\\ \leq&(kn\beta c+\beta kc^2/24)\left(\frac{c^{\beta+1}}{\beta+1}\right)^{k}C_{\beta,n-2k,k}.
\end{align*}\end{lem}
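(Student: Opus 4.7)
The plan is to telescope the difference $E_{\beta,n,k,0}(A,I) - \varphi(\beta,A)^k C_{\beta,n-2k,k}(I)$ from $l=0$ up to $l=k$ one step at a time using the third inequality of Lemma \ref{lem10}, and then separately compare $\varphi(\beta,A)^k$ to $(\int_A u^\beta du)^k$ using the fourth inequality. The two contributions will correspond to the two summands $kn\beta c$ and $\beta k c^2/24$ in the stated bound.

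Just as in the derivation of \eqref{21}, at each level $l\in\{1,\ldots,k\}$ the difference between $E_{\beta,n,k,l-1}(A,I)$ and $E_{\beta,n,k,l}(A,I)$ is captured by a single pair of integration variables $(x_1,x_2)=(\theta_{n-2k+l},\theta_{n-k+1})$, integrated in the former over $I\times(x_1+A)$ against $|e^{ix_1}-e^{ix_2}|^\beta|F(x_1)|^\beta|F(x_2)|^\beta$ and in the latter over $I$ against $|F(x_1)|^{2\beta}$, where $F$ is the polynomial built from the remaining $n-l-1$ variables and satisfies $\deg F\leq n-2$. Applying the third inequality of Lemma \ref{lem10} to this inner piece and then integrating over the remaining variables yields
\[
\bigl|E_{\beta,n,k,l-1}(A,I) - \varphi(\beta,A)\,E_{\beta,n,k,l}(A,I)\bigr| \leq \varphi(\beta,A)(n\beta c)\,\widetilde E_{\beta,n,k,l}(A),
\]
where $\widetilde E_{\beta,n,k,l}(A)$ is the relaxation of $E_{\beta,n,k,l}(A,I)$ obtained by letting $\theta_{n-2k+l}$ range over $(-\pi,\pi)$ instead of $I$. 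Iterating the first (upper) inequality of Lemma \ref{lem10}, exactly as in the chain leading from \eqref{21} to \eqref{22}, and further relaxing the remaining $I$-constraints gives $\widetilde E_{\beta,n,k,l}(A)\leq (c^{\beta+1}/(\beta+1))^{k-l}C_{\beta,n-2k,k}$. The telescoping identity
\[
E_{\beta,n,k,0}(A,I) - \varphi(\beta,A)^k C_{\beta,n-2k,k}(I) = \sum_{l=1}^k \varphi(\beta,A)^{l-1}\bigl[E_{\beta,n,k,l-1}(A,I) - \varphi(\beta,A)E_{\beta,n,k,l}(A,I)\bigr],
\]
combined with $\varphi(\beta,A)\leq c^{\beta+1}/(\beta+1)$, then produces the bound $kn\beta c\cdot(c^{\beta+1}/(\beta+1))^k C_{\beta,n-2k,k}$, accounting for the first summand.

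For the second summand, the fourth inequality of Lemma \ref{lem10} combined with the elementary estimates $\sin(c/2)/(c/2)\geq 1-c^2/24$ (Taylor) and $(1-c^2/24)^\beta\geq 1-\beta c^2/24$ (Bernoulli, valid since $c<1$ and $\beta\geq 1$) gives $\varphi(\beta,A)\geq (1-\beta c^2/24)\int_A u^\beta du$. Hence $a^k-b^k\leq k(a-b)a^{k-1}$ applied with $a=\int_A u^\beta du\leq c^{\beta+1}/(\beta+1)$ and $b=\varphi(\beta,A)$ yields
\[
\Bigl|\Bigl(\int_A u^\beta du\Bigr)^k - \varphi(\beta,A)^k\Bigr| \leq \frac{k\beta c^2}{24}\Bigl(\frac{c^{\beta+1}}{\beta+1}\Bigr)^k.
\]
Multiplying by $C_{\beta,n-2k,k}(I)\leq C_{\beta,n-2k,k}$ and combining with the telescoping estimate via the triangle inequality produces the claimed bound. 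The main technical step is the first one: carefully verifying that the consolidation of the pair $(\theta_{n-2k+l},\theta_{n-k+1})$ at each level reduces $E_{\beta,n,k,l-1}(A,I)$ to $E_{\beta,n,k,l}(A,I)$ up to the Lemma \ref{lem10} error, in particular tracking how the new doubled variable retains the $I$-constraint while the removed satellite no longer contributes any constraint beyond the $A$-displacement it encoded.
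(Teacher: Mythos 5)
Your proposal is correct and follows essentially the same route as the paper: the same telescoping in $l$ via the third inequality of Lemma \ref{lem10}, the same relaxation of the error term to $E_{\beta,n,k,l}(c)\leq (c^{\beta+1}/(\beta+1))^{k-l}C_{\beta,n-2k,k}$ via \eqref{22}, and the same comparison of $\varphi(\beta,A)^k$ with $(\int_A u^{\beta}du)^k$ (your Bernoulli-plus-$a^k-b^k\leq k(a-b)a^{k-1}$ step is a trivial variant of the paper's direct bound $1-(1-c^2/24)^{\beta k}\leq \beta k c^2/24$). The identification of the consolidated pair $(\theta_{n-2k+l},\theta_{n-k+1})$ is also consistent with the paper's index conventions.
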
\begin{proof}As before, after changing the order of variables, we can write
\begin{align*}&{E_{\beta,n,k,l-1}}(A,I)=\int_{\Sigma_{n-2,k-1,A,I,l-1}}d\theta_1\cdots d\theta_{n-l-1}
\Delta^{\beta}\\ &\times\int_{I}d x_1\int_{x_1+A}d x_2|e^{ix_1}-e^{ix_2}|^{\beta}\prod_{j=1}^2\prod_{m=1}^{n-l-1}|e^{ix_j}-e^{i\theta_m}|^{q_m\beta}
\end{align*}and \begin{align*}&{E_{\beta,n,k,l}}(A,I)=\int_{\Sigma_{n-2,k-1,A,I,l-1}}d\theta_1\cdots d\theta_{n-l-1}
\Delta^{\beta}\\ &\times\int_{I}d x_1\prod_{m=1}^{n-l-1}|e^{ix_1}-e^{i\theta_m}|^{2q_m\beta},
\end{align*}here,\begin{align*}&\Delta=
\prod_{1\leq j<m\leq n-l-1}|e^{i\theta_j}-e^{i\theta_m}|^{q_jq_m},\ {q_s=1+\chi_{\{n-2k<s< n-2k+l\}}}.
\end{align*}By Lemma \ref{lem10}, $ \Sigma_{n-2,k-1,A,I,l-1}\subset\Sigma_{n-l-1,k-l,c} $ and \eqref{22}, we have\begin{align*}&|{E_{\beta,n,k,l-1}}(A,I)-\varphi({\beta},A){E_{\beta,n,k,l}}(A,I)|\\ \leq &\varphi({\beta},A)(n\beta c)\times\int_{\Sigma_{n-2,k-1,A,I,l-1}}d\theta_1\cdots d\theta_{n-l-1}
\Delta^{\beta}\int_{-\pi}^{\pi}d x_1\prod_{m=1}^{n-l-1}|e^{ix_1}-e^{i\theta_m}|^{2q_m\beta}\\ \leq &\varphi({\beta},A)(n\beta c)\int_{\Sigma_{n-l-1,k-l,c}}d\theta_1\cdots d\theta_{n-l-1}\int_{-\pi}^{\pi}d x_1
\\&\prod_{1\leq j<m\leq n-l-1}|e^{i\theta_j}-e^{i\theta_m}|^{q_jq_m\beta}\prod_{m=1}^{n-l-1}|e^{ix_1}-e^{i\theta_m}|^{2q_m\beta}\Big|_{q_s=1+\chi_{\{0<s-n+2k< l\}}}\\&=\varphi({\beta},A)(n\beta c)\int_{\Sigma_{n-l,k-l,c}}d\theta_1\cdots d\theta_{n-l}\\& \times
 \prod_{1\leq j<m\leq n-l-1}|e^{i\theta_j}-e^{i\theta_m}|^{q_jq_m\beta}\Big|_{q_s=1+\chi_{\{0<s-n+2k\leq l\}}}\\&=\varphi({\beta},A)(n\beta c){E_{\beta,n,k,l}}(c)\\ &\leq \varphi({\beta},A)(n\beta c)\left(\frac{c^{\beta+1}}{\beta+1}\right)^{k-l}C_{\beta,n-2k,k},
\end{align*}where $\varphi(\beta, A)$ is as in Lemma \ref{lem10} and $E_{\beta,n,k,l}(c)$ is as in \eqref{enkl}.

Therefore (using Lemma \ref{lem10} again), we have \begin{align*}&|{E_{\beta,n,k,0}}(A,I)-\varphi({\beta},A)^k{E_{\beta,n,k,k}}(A,I)|\\ \leq&
\sum_{l=1}^k\varphi({\beta},A)^{l-1}|{E_{\beta,n,k,l-1}}(A,I)-\varphi({\beta},A){E_{\beta,n,k,l}}(A,I)|\\
\leq&\sum_{l=1}^k\varphi({\beta},A)^{l}(n\beta c)\left(\frac{c^{\beta+1}}{\beta+1}\right)^{k-l}C_{\beta,n-2k,k}\\
\leq&\sum_{l=1}^k(n\beta c)\left(\frac{c^{\beta+1}}{\beta+1}\right)^{k}C_{\beta,n-2k,k}=(kn\beta c)\left(\frac{c^{\beta+1}}{\beta+1}\right)^{k}C_{\beta,n-2k,k}.
\end{align*}As $1\geq\dfrac{\sin x}{x}\geq 1-x^2/6>0$ for $x\in (0,1)$, and by Lemma \ref{lem10}, we have\begin{align*}0\leq&\left(\int_Au^{\beta}du\right)^k-\varphi({\beta},A)^k\leq
\left(\int_Au^{\beta}du\right)^k\left(1-\left(\frac{\sin(c/2)}{c/2}\right)^{\beta k}\right)\\ \leq&
\left(\frac{c^{\beta+1}}{\beta+1}\right)^{k}\left(1-\left(1-c^2/24\right)^{\beta k}\right)\leq
\left(\frac{c^{\beta+1}}{\beta+1}\right)^{k}{\beta k}c^2/24.
\end{align*}By definition, we have\begin{align*}&0\leq{E_{\beta,n,k,k}}(A,I)=C_{\beta,n-2k,k}(I)\leq C_{\beta,n-2k,k},
\end{align*}therefore, we have\begin{align*}&\left|{E_{\beta,n,k,0}}(A,I)-\left(\int_Au^{\beta}du\right)^kC_{\beta,n-2k,k}(I)\right|\\ \leq&
\left|{E_{\beta,n,k,0}}(A,I)-\varphi({\beta},A)^k{E_{\beta,n,k,k}}(A,I)\right| +|\left(\int_Au^{\beta}du\right)^k-
\varphi({\beta},A)^k|C_{\beta,n-2k,k}(I)\\ \leq&(kn\beta c)\left(\frac{c^{\beta+1}}{\beta+1}\right)^{k}C_{\beta,n-2k,k}+\left(\frac{c^{\beta+1}}{\beta+1}\right)^{k}({\beta k}c^2/24)C_{\beta,n-2k,k},
\end{align*}which completes the proof.\end{proof}Now we ready to prove \eqref{26}. By the integral expression of $\mathbb{E}\rho^{(k,n,\gamma)}(B^k) $ with $\gamma=\frac{\beta+2}{\beta+1}$, the definition of $E_{\beta,n,k,l}(A, I) $ and changing of variables, we have\begin{align*}&
\mathbb{E}(\rho^{(k,n)}((A\times I))^k)
-\left(\int_Au^{\beta}du\right)^k\frac{C_{\beta,n-2k,k}(I)}{C_{\beta,n}n^{k\beta}}\\=&
\frac{n!}{(n-2k)!}\frac{{E_{\beta,n,k,0}}(n^{-\gamma}A,I)}{C_{\beta,n}}-\left(\int_{n^{-\gamma}A}u^{\beta}du\right)^k
\frac{C_{\beta,n-2k,k}(I)}{C_{\beta,n}n^{k\beta-k(\beta+1)\gamma}}\\=&
\frac{n^{2k}}{C_{\beta,n}}\left({E_{\beta,n,k,0}}({n^{-\gamma}A},I)-\left(\int_{n^{-\gamma}A}u^{\beta}du\right)^k
C_{\beta,n-2k,k}(I)\right)\\&-\left(n^{2k}-\frac{n!}{(n-2k)!}\right)\frac{{E_{\beta,n,k,0}}(n^{-\gamma}A,I)}{C_{\beta,n}}.
\end{align*}We first notice that\begin{align*}
0&\leq n^{2k}-\frac{n!}{(n-2k)!}=n^{2k}-\prod_{j=0}^{2k-1}(n-j)=n^{2k}-n^{2k}\prod_{j=0}^{2k-1}(1-j/n)\\&\leq n^{2k}-n^{2k}\left(1-\sum_{j=0}^{2k-1}j/n\right)=n^{2k}\sum_{j=0}^{2k-1}j/n=n^{2k-1}k(2k-1).
\end{align*}As $n^{-\gamma}A\subset(0,n^{-\gamma}c_1),\ \Sigma_{n-2,k-1,n^{-\gamma}A,I,l-1}\subset\Sigma_{n-l-1,k-l,n^{-\gamma}c_1} , $ for $n$ large enough we have $n^{1-\gamma}\beta c_1\in(0,1),$ then we infer from \eqref{22} that\begin{align*}
0&\leq {E_{\beta,n,k,0}}(n^{-\gamma}A,I)\leq {E_{\beta,n,k,0}}(n^{-\gamma}c_1)\leq C_{\beta,n-2k,k}\left(\frac{(n^{-\gamma}c_1)^{\beta+1}}{\beta+1}\right)^{k}.
\end{align*}Therefore,we have \begin{align*}
0&\leq \left(n^{2k}-\frac{n!}{(n-2k)!}\right)\frac{{E_{\beta,n,k,0}}(n^{-\gamma}A,I)}{C_{\beta,n}}\\&\leq n^{2k-1}k(2k-1) \frac{C_{\beta,n-2k,k}}{C_{\beta,n}}\left(\frac{(n^{-\gamma}c_1)^{\beta+1}}{\beta+1}\right)^{k}\\&= n^{2k-1}k(2k-1) \frac{C_{\beta,n-2k,k}}{C_{\beta,n}}\left(\frac{n^{-(\beta+2)}c_1^{\beta+1}}{\beta+1}\right)^{k}\\&= n^{-1}k(2k-1) \frac{C_{\beta,n-2k,k}}{C_{\beta,n}n^{k\beta}}\left(\frac{c_1^{\beta+1}}{\beta+1}\right)^{k}.
\end{align*}By Lemma \ref{lem12}, we have\begin{align*}&
\frac{n^{2k}}{C_{\beta,n}}\left|{E_{\beta,n,k,0}}({n^{-\gamma}A},I)-\left(\int_{n^{-\gamma}A}u^{\beta}du\right)^k
C_{\beta,n-2k,k}(I)\right|\\ \leq&\frac{n^{2k}}{C_{\beta,n}}(kn\beta c+\beta kc^2/24)\left(\frac{c^{\beta+1}}{\beta+1}\right)^{k}C_{\beta,n-2k,k}\Big|_{c=n^{-\gamma}c_1}\\ =&\frac{n^{2k}}{C_{\beta,n}}(kn^{1-\gamma}\beta c_1+\beta kn^{-2\gamma}c_1^2/24)\left(\frac{n^{-(\beta+2)}c_1^{\beta+1}}{\beta+1}\right)^{k}C_{\beta,n-2k,k}\\ =&(kn^{1-\gamma}\beta c_1+\beta kn^{-2\gamma}c_1^2/24)\left(\frac{c_1^{\beta+1}}{\beta+1}\right)^{k}\frac{C_{\beta,n-2k,k}}{C_{\beta,n}n^{k\beta}}.
\end{align*}Therefore, we have \begin{align*}&
\left|\mathbb{E}(\rho^{(k,n)}((A\times I))^k)
-\left(\int_Au^{\beta}du\right)^k\frac{C_{\beta,n-2k,k}(I)}{C_{\beta,n}n^{k\beta}}\right|\\ \leq&(kn^{1-\gamma}\beta c_1+\beta kn^{-2\gamma}c_1^2/24+n^{-1}k(2k-1) )\left(\frac{c_1^{\beta+1}}{\beta+1}\right)^{k}\frac{C_{\beta,n-2k,k}}{C_{\beta,n}n^{k\beta}}.
\end{align*}Now \eqref{26} follows from \eqref{18} of  the uniform boundedness of $\dfrac{C_{\beta,n-2k,k}}{C_{\beta,n}n^{k\beta}} $ and\begin{align*}&
\lim_{n\to+\infty}(kn^{1-\gamma}\beta c_1+\beta kn^{-2\gamma}c_1^2/24+n^{-1}k(2k-1) )=0.
\end{align*}

\section{Proof of the upper bound \eqref{19}}\label{19a}
Now we consider \eqref{19}.  We will  make use of several formulas, especially these on the generalized hypergeometric functions $_2F_1^{(\alpha)}$, where we refer   Chapter 13 of \cite{For} for more details.

By definition, we can rewrite the two-component log-gas as
\begin{align}\label{30}&{C_{\beta,n_1,2}}(I)=\int_{I^2}dr_1dr_2
|e^{ir_1}-e^{ir_2}|^{4\beta}I_{n_1,2}(\beta;r_1,r_2),\end{align}here
\begin{align*}&I_{n_1,2}(\beta;r_1,r_2):=  \int_{(-\pi,\pi)^{n_1}}d\theta_1\cdots d\theta_{n_1} \\&
\prod_{j=1}^{ n_1}\prod_{k=1}^{ 2}|1-e^{i(\theta_j-r_k)}|^{2\beta}\prod_{1\leq j<k\leq n_1}|e^{i\theta_j}-e^{i\theta_k}|^{\beta}.
\end{align*}

Now the uniform upper bound \eqref{19} is a direct consequence of the following lemma, together with the integral expression \eqref{30} (with $n_1=n-4$) and Fatou's Lemma.

\begin{lem}\label{lem6}There exists a constant $C$ depending only on $ \beta$ such that\begin{align*}&I_{n-4,2}(\beta;r_1,r_2)|e^{ir_1}-e^{ir_2}|^{4\beta}\leq CC_{\beta,n}n^{2\beta},\ \ \forall\ n>4,\ r_1,r_2\in[-\pi,\pi],
\end{align*}and\begin{align*}&\limsup_{n\to+\infty}C_{\beta,n}^{-1}n^{-2\beta}I_{n-4,2}(\beta;r_1,r_2)|e^{ir_1}-e^{ir_2}|^{4\beta}\leq (2\pi)^{-2}A_{\beta}^2.
\end{align*}\end{lem}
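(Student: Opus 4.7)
The plan is to convert $I_{n-4,2}(\beta;r_1,r_2)$ into a $2\beta$-fold integral by way of the theory of generalized hypergeometric functions ${}_2F_1^{(\alpha)}$, and then analyze that integral by contour deformation. First I would use translation invariance on the circle (the change of variables $\theta_j \mapsto \theta_j + r_2$) to reduce to the case $r_2=0$, so that the quantity depends only on $r:=r_1-r_2$. The integrand then becomes $\prod_j|1-e^{i\theta_j}|^{2\beta}|1-e^{i(\theta_j-r)}|^{2\beta}$ times the Vandermonde, which is a Selberg-type integral with two external charges.

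Next, via the Kaneko/Yan integral representation for ${}_2F_1^{(\alpha)}$ (Ch.~13 of \cite{For}), I would rewrite $I_{n-4,2}(\beta;r_1,r_2)$ as an explicit $\Gamma$-function prefactor times a $2\beta$-dimensional Euler-type integral evaluated at the point $e^{ir}$. This is the representation \eqref{31} referenced in the introduction. The decisive feature is that all the $n$-dependence sits in the $\Gamma$-factor (whose asymptotics match those of Lemma~\ref{lem7}), while the $r$-dependence is isolated in a finite-dimensional integral with a concrete algebraic integrand.

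The heart of the argument is then to use contour deformation on this $2\beta$-fold integral to extract the factor $|1-e^{ir}|^{4\beta}=|e^{ir_1}-e^{ir_2}|^{4\beta}$. Near $r=0$ the integrand has branch singularities at $1$ and $e^{ir}$; deforming the $2\beta$ contours across these cuts produces a new $2\beta$-fold integral in which the $|1-e^{ir}|^{4\beta}$ factor has been pulled out in front, and whose remaining integrand is bounded uniformly in $n$, $r_1$ and $r_2$. Combining this bound with the $\Gamma$-prefactor (which by Lemma~\ref{lem7} is $O(C_{\beta,n}n^{2\beta})$) yields the first inequality with a constant $C$ depending only on $\beta$.

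For the asymptotic upper bound, I would take the pointwise limit $n\to+\infty$ in the deformed integrand, using Stirling on the $\Gamma$-prefactor to identify the leading constant as $A_{\beta,2}^2=A_\beta^2$ (again via Lemma~\ref{lem7}), and then apply Fatou. The main obstacle I anticipate is the bookkeeping in the contour deformation: one must verify that the $4\beta$ branch factors coming from the two nearly colliding singularities recombine to produce \emph{exactly} $|1-e^{ir}|^{4\beta}$, with the leftover integrand finite and computable even in the limit $r\to 0$, since naively $I_{n-4,2}$ blows up while $|1-e^{ir}|^{4\beta}$ vanishes, and the cancellation has to be tracked at the level of the analytic structure of ${}_2F_1^{(\alpha)}$ near argument~$1$.
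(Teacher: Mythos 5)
Your overall route is the one the paper takes: reduce to $r=r_1-r_2$, pass to the ${}_2F_1^{(\beta/2)}$ representation and hence to the $2\beta$-fold Euler integral \eqref{31}, and then analyze that integral by analytic continuation/contour deformation together with the asymptotics of the Gamma-prefactor from Lemma \ref{lem7}. However, the description of the decisive step contains a genuine gap in the accounting of powers of $n$. The prefactor in \eqref{31} is $(2\pi)^{-1}C_{\beta,n-4,(4)}/S_{2\beta}$, and by Lemma \ref{lem7} this is of order $C_{\beta,n}n^{6\beta}$, not $C_{\beta,n}n^{2\beta}$. So if, as you claim, the deformation merely "pulled out $|1-e^{ir}|^{4\beta}$ in front with a remaining integrand bounded uniformly in $n$," you would only obtain $I_{n-4,2}(\beta;r_1,r_2)|e^{ir_1}-e^{ir_2}|^{4\beta}\leq C\,C_{\beta,n}n^{6\beta}$, which is off by $n^{4\beta}$. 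What is actually needed, and what the paper proves, is that the deformed integral itself \emph{decays} in $n$: writing $t=-e^{2i\theta}$ (so $|1-t|=2\cos\theta$) and rescaling $t_j\mapsto s_j/z$ with $z=e^{i\theta}$, one splits $(0,+\infty)^{2\beta}$ into pieces indexed by $l$, bounds the oscillatory factor by $\bigl|\tfrac{1-e^{i\theta}s}{1+e^{-i\theta}s}\bigr|^{n_1}\leq e^{-2sn_1\cos\theta/(1+s^2)}$, and evaluates the resulting Laguerre-type Selberg integrals via $J_{l,\beta}(n_1\cos\theta)=(n_1\cos\theta)^{-2l^2/\beta}J_{l,\beta}(1)$. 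Since $2\bigl(l^2+(2\beta-l)^2\bigr)/\beta\geq4\beta$, each piece contributes at most $(n|1-t|)^{-4\beta}$, which exactly absorbs the $|1-t|^{4\beta}$ and converts $n^{6\beta}$ into $n^{2\beta}$. This decay mechanism is absent from your plan.

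Two further points you would need to supply. First, the regime $|e^{ir_1}-e^{ir_2}|\leq n^{-1}$ must be treated separately (there the trivial bound $|F_{n_1,\beta}(t)|\leq S_{2\beta}$ together with $|1-t|^{4\beta}\leq n^{-4\beta}$ and Lemma \ref{lem7} suffices); your Fatou/limit argument does not cover the uniform bound in this range. Second, for the sharp constant in the $\limsup$, equality in the exponent count occurs only for the piece $l=\beta$ of the decomposition, and identifying its limit as $(2\pi)^{-2}A_{\beta}^2$ (rather than merely some constant) requires the nontrivial Gamma-function identity
\begin{align*}
(2\pi)A_{\beta,4}{2\beta\choose \beta}\frac{|J_{\beta,\beta}(1)|^2}{S_{2\beta}(2/\beta-1,2/\beta-1,2/\beta)}=A_{\beta}^2,
\end{align*}
which is Lemma \ref{lem5} in the paper and is not addressed in your proposal.
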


We need to prove several estimates in order to prove Lemma \ref{lem6}.   By Proposition 13.1.2 in \cite{For}, we have the following relation between the generalized hypergeometric function  $_2F_1^{(\alpha)}$ and the Selberg  type integrals, \begin{align}\nonumber&\frac{1}{M_{n}(a,b,1/\alpha)}\int_{-1/2}^{1/2}d\theta_1\cdots\int_{-1/2}^{1/2}d\theta_n \prod_{l=1}^n\bigg(e^{\pi i\theta_l(a-b)}|1+e^{2\pi i\theta_l}|^{a+b}\\ \nonumber&\prod_{l'=1}^m(1+t_{l'}e^{2\pi i\theta_l})\bigg)\prod_{1\leq j<k\leq n}|e^{2\pi i\theta_j}-e^{2\pi i\theta_k}|^{2/\alpha}\\ \nonumber&=_2F_1^{(1/\alpha)}(-n,\alpha b;-(n-1)-\alpha(1+a);t_1,\cdots,t_m)\\ \label{7} &=\frac{_2F_1^{(1/\alpha)}(-n,\alpha b;\alpha(a+b+m);1-t_1,\cdots,1-t_m)}{_2F_1^{(1/\alpha)}(-n,\alpha b;\alpha(a+b+m);(1)^m)},
\end{align}here, $M_n(a,b,1/\alpha)$ is defined as in \eqref{mn} and we have used the following formula (Proposition 13.1.7 in \cite{For}): \begin{align*}&_2F_1^{(\alpha)}(a, b;c;t_1,\cdots,t_m)=\\ &\frac{_2F_1^{(\alpha)}(a, b;a+b+1+(m-1)/\alpha-c;1-t_1,\cdots,1-t_m)}{_2F_1^{(\alpha)}(a, b;a+b+1+(m-1)/\alpha-c;(1)^m)}.
\end{align*}By Proposition 13.1.4 in \cite{For}, we have%\begin{align*}&\frac{1}{M_{n}(a,b,1/\alpha)}\int_{-1/2}^{1/2}d\theta_1\cdots
%\int_{-1/2}^{1/2}d\theta_n \prod_{l=1}^n\bigg(e^{\pi i\theta_l(a-b)}|1+e^{2\pi i\theta_l}|^{a+b}\\ &(1+te^{2\pi i\theta_l})^{-r}
%\bigg)\times\prod_{1\leq j<k\leq n}|e^{2\pi i\theta_j}-e^{2\pi i\theta_k}|^{2/\alpha}\\&=_2F_1^{(\alpha)}\left(r,- b;\frac{1}{\alpha}
%(n-1)+1+a;(t)^n\right),\end{align*}
\begin{align}\nonumber &\frac{1}{S_{n}(\lambda_1,\lambda_2,1/\alpha)}\int_{0}^{1}dx_1\cdots\int_{0}^{1}dx_n \prod_{l=1}^nx_l^{\lambda_1}(1-x_l)^{\lambda_2}(1-sx_l)^{-r}\\ \nonumber&\times\prod_{1\leq j<k\leq n}|{x_j}-{x_k}|^{2/\alpha}\\ \label{8} &=_2F_1^{(\alpha)}\left(r,\frac{1}{\alpha}(n-1)+\lambda_1+1;\frac{2}{\alpha}(n-1)+\lambda_1+\lambda_2+2;
(s)^n\right),
\end{align}
here, by (4.1) and (4.3) in \cite{For},   the Selberg integral is \begin{align}\label{seg}S_{n}(\lambda_1,\lambda_2,\lambda):&=\int_0^1dt_1\cdots\int_0^1dt_n\prod_{l=1}^nt_l^{\lambda_1}(1-t_l)^{\lambda_2}
\prod_{1\leq j<k\leq n}|t_j-t_k|^{2\lambda} \nonumber \\&=\prod_{j=0}^{n-1}\frac{\Gamma(\lambda_1+1+j\lambda)\Gamma(\lambda_2+1+j\lambda)
\Gamma(1+(j+1)\lambda)}{\Gamma(\lambda_1+\lambda_2+2+(n+j-1)\lambda)\Gamma(1+\lambda)}.
\end{align}

Now we change variables $ \theta_j\mapsto\theta_j+r_1\pm \pi$ to obtain\begin{align*}I_{n_1,2}(\beta;r_1,r_2)=& \int_{(-\pi,\pi)^{n_1}}d\theta_1\cdots d\theta_{n_1}
\prod_{j=1}^{ n_1}(|1+e^{i\theta_j}|^{2\beta}|1+e^{i(\theta_j+r_1-r_2)}|^{2\beta})\\ &\times\prod_{1\leq j<k\leq n_1}|e^{i\theta_j}-e^{i\theta_k}|^{\beta}.
\end{align*}For $ \beta$ positive integer, we have \begin{align*}|1+e^{i(\theta_j+r_1-r_2)}|^{2\beta}=e^{-i\beta(\theta_j+r_1-r_2)}(1+e^{i(\theta_j+r_1-r_2)})^{2\beta},
\end{align*}which shows\begin{align*}&I_{n_1,2}(\beta;r_1,r_2)= e^{-i\beta n_1(r_1-r_2)}\int_{(-\pi,\pi)^{n_1}}d\theta_1\cdots d\theta_{n_1}
\\ &\prod_{j=1}^{ n_1}\left(e^{-i\beta\theta_j}|1+e^{i\theta_j}|^{2\beta}\left(1+e^{i(\theta_j+r_1-r_2)}\right)^{2\beta}\right)\prod_{1\leq j<k\leq n_1}|e^{i\theta_j}-e^{i\theta_k}|^{\beta}.
\end{align*}Comparing with \eqref{7} and changing variables $ \theta_j\mapsto 2\pi\theta_j$, this integral is of the type therein with\begin{align*}n=n_1,\ m={2\beta},\ a-b=-{2\beta},\ a+b={2\beta},\ 2/\alpha=\beta,
\end{align*}and $$t_k=t:=e^{i(r_1-r_2)}\,\,\, \mbox{for}\,\,\,1\leq k\leq m.$$ Thus \eqref{7} shows that $I_{n_1,2} $ is proportional to\begin{align*}{t^{-\beta n_1}}_2F_1^{(\beta/2)}(-n_1,4;8;((1-t))^{2\beta}),
\end{align*} and by \eqref{mn} \eqref{7}, $_2F_1^{(\beta/2)} $ equals to 1  at the origin, thus by considering the case of $t_k=t=1$ ($1\leq k\leq 2\beta$) for $r_1=r_2 $, we will have \begin{align}\label{9}&I_{n_1,2}(\beta;r_1,r_2)=I_{n_1,2}(\beta;r_1,r_1){t^{-\beta n_1}}_2F_1^{(\beta/2)}(-n_1,4;8;((1-t))^{2\beta}),
\end{align}where \begin{align}\label{10}&I_{n_1,2}(\beta;r_1,r_1)= \int_{(-\pi,\pi)^{n_1}}d\theta_1\cdots d\theta_{n_1}
\prod_{j=1}^{ n_1}|1+e^{i\theta_j}|^{4\beta}\\ \nonumber& \times\prod_{ 1\leq j<k\leq n_1}|e^{i\theta_j}-e^{i\theta_k}|^{\beta}=(2\pi)^{n_1}M_{n_1}(2\beta,2\beta,\beta/2).
\end{align}Comparison with \eqref{8} shows that $_2F_1^{(\beta/2)} $ is of the type therein with\begin{align*}r=-n_1,\ \alpha=\beta/2,\ n=2\beta,\ \lambda_1=\lambda_2=4-\frac{1}{\alpha}(n-1)-1=\frac{2}{\beta}-1,\ s=1-t,
\end{align*}thus by \eqref{8}, we have \begin{align}\label{11}&_2F_1^{(\beta/2)}(-n_1,4;8;((1-t))^{2\beta})
=\frac{1}{S_{2\beta}({2}/{\beta}-1,{2}/{\beta}-1,{2}/{\beta})}\times
\\ \nonumber&\int_{[0,1]^{2\beta}}du_1\cdots du_{2\beta} \prod_{j=1}^{2\beta}u_j^{{2}/{\beta}-1}(1-u_j)^{{2}/{\beta}-1}(1-(1-t)u_j)^{n_1}\\ \nonumber&\times\prod_{1\leq j<k\leq 2\beta}|{u_j}-{u_k}|^{4/\beta} .
\end{align}Using \eqref{9}\eqref{10}\eqref{11}, we have\begin{align}\nonumber I_{n_1,2}(\beta;r_1,r_2)
=\frac{(2\pi)^{n_1}M_{n_1}(2\beta,2\beta,\beta/2){t^{-\beta n_1}}}{S_{2\beta}({2}/{\beta}-1,{2}/{\beta}-1,{2}/{\beta})}
\int_{[0,1]^{2\beta}}du_1\cdots du_{2\beta}\\ \label{31} \prod_{j=1}^{2\beta}u_j^{{2}/{\beta}-1}(1-u_j)^{{2}/{\beta}-1}(1-(1-t)u_j)^{n_1}\prod_{1\leq j<k\leq 2\beta}|{u_j}-{u_k}|^{4/\beta} .
\end{align}
 Now we   rewrite \eqref{31} as\begin{align*}&I_{n_1,2}(\beta;r_1,r_2)
=\frac{(2\pi)^{n_1}M_{n_1}(2\beta,2\beta,\beta/2){t^{-\beta n_1}}}{S_{2\beta}({2}/{\beta}-1,{2}/{\beta}-1,{2}/{\beta})}F_{n_1,\beta}(t),\end{align*}here $t=e^{i(r_1-r_2)}$ and we denote \begin{align*}
&F_{n_1,\beta}(t):=\int_{[0,1]^{2\beta}}du_1\cdots du_{2\beta}\\& \prod_{j=1}^{2\beta}u_j^{{2}/{\beta}-1}(1-u_j)^{{2}/{\beta}-1}(1-(1-t)u_j)^{n_1}\prod_{1\leq j<k\leq 2\beta}|{u_j}-{u_k}|^{4/\beta},
\end{align*}then $F_{n_1,\beta} $ is an analytic function (in fact a polynomial) of $t.$ As $|1-(1-t)u_j|=|1-u_j+tu_j|\leq |1-u_j|+|tu_j|=1$  for $u_j\in [0,1],\ |t|=1,$ we have\begin{align*}
|F_{n_1,\beta}(t)|&\leq\int_{[0,1]^{2\beta}}du_1\cdots du_{2\beta}\\& \prod_{j=1}^{2\beta}u_j^{{2}/{\beta}-1}(1-u_j)^{{2}/{\beta}-1}|1-(1-t)u_j|^{n_1}\prod_{1\leq j<k\leq 2\beta}|{u_j}-{u_k}|^{4/\beta}\\&\leq\int_{[0,1]^{2\beta}}du_1\cdots du_{2\beta} \prod_{j=1}^{2\beta}u_j^{{2}/{\beta}-1}(1-u_j)^{{2}/{\beta}-1}\prod_{1\leq j<k\leq 2\beta}|{u_j}-{u_k}|^{4/\beta}\\&=S_{2\beta}({2}/{\beta}-1,{2}/{\beta}-1,{2}/{\beta}),
\end{align*}which together with \eqref{33} implies\begin{align}\label{16}I_{n_1,2}(\beta;r_1,r_2)
&=\frac{(2\pi)^{n_1}M_{n_1}(2\beta,2\beta,\beta/2)}{S_{2\beta}({2}/{\beta}-1,{2}/{\beta}-1,{2}/{\beta})}|F_{n_1,\beta}(t)|\\ \nonumber&\leq (2\pi)^{n_1}M_{n_1}(2\beta,2\beta,\beta/2)= (2\pi)^{-1}C_{\beta,n_1,(4)}.\end{align} Changing variables $ u_j\mapsto t_j/(1+t_j)$, we obtain\begin{align*}
F_{n_1,\beta}(t)&=\int_{(0,+\infty)^{2\beta}}\frac{dt_1\cdots dt_{2\beta}}{(1+t_1)^2\cdots(1+t_{2\beta})^2}\\& \prod_{j=1}^{2\beta}\frac{t_j^{{2}/{\beta}-1}}{(1+t_j)^{2({2}/{\beta}-1)}}\left(\frac{1+tt_j}{1+t_j}\right)^{n_1}\prod_{1\leq j<k\leq 2\beta}\left|\frac{{t_j}-{t_k}}{(1+t_j)(1+t_k)}\right|^{4/\beta}\\& =\int_{(0,+\infty)^{2\beta}}{dt_1\cdots dt_{2\beta}}\prod_{j=1}^{2\beta}\frac{t_j^{{2}/{\beta}-1}(1+tt_j)^{n_1}}{(1+t_j)^{2({2}/{\beta}-1)+2+n_1+4/\beta\cdot(2\beta-1)}}
\\&\times\prod_{1\leq j<k\leq 2\beta}\left|{t_j}-{t_k}\right|^{4/\beta}.
\end{align*}Since $2({2}/{\beta}-1)+2+4/\beta\cdot(2\beta-1)={4}/{\beta}+8-{4}/{\beta}=8, $ we have\begin{align*}
F_{n_1,\beta}(-z^2)=&\int_{(0,+\infty)^{2\beta}}{dt_1\cdots dt_{2\beta}}\prod_{j=1}^{2\beta}\frac{t_j^{{2}/{\beta}-1}(1-z^2t_j)^{n_1}}{(1+t_j)^{8+n_1}}
\\&\times\prod_{1\leq j<k\leq 2\beta}\left|{t_j}-{t_k}\right|^{4/\beta}.
\end{align*}For $z\in (0,+\infty),$ a simple changing of variables $zt_j\mapsto s_j$ shows that\begin{align*}
&F_{n_1,\beta}(-z^2)=z^{-8\beta}\int_{(0,+\infty)^{2\beta}}{ds_1\cdots ds_{2\beta}}\prod_{j=1}^{2\beta}\frac{s_j^{{2}/{\beta}-1}(1-zs_j)^{n_1}}{(1+z^{-1}s_j)^{8+n_1}}
\\&\times\prod_{1\leq j<k\leq 2\beta}\left|{s_j}-{s_k}\right|^{4/\beta}.
\end{align*}Since both sides are analytic functions of $z$ for $\text{Re}z>0,$ this identity is always true for $\text{Re}z>0,$ moreover,  we can decompose $(0,+\infty)$ into $(0,1]\cup[1,+\infty)$ and use the symmetry of $s_j$ to obtain\begin{align}
\label{13}&F_{n_1,\beta}(-z^2)=z^{-8\beta}\sum_{l=0}^{2\beta}{2\beta\choose l}F_{n_1,\beta,l}(z),\ \ \ \ \text{Re}z>0,\end{align}where\begin{align*}F_{n_1,\beta,l}(z):=&\int_{(0,1]^l\times[1,+\infty)^{2\beta-l}}{ds_1\cdots ds_{2\beta}}\prod_{j=1}^{2\beta}\frac{s_j^{{2}/{\beta}-1}(1-zs_j)^{n_1}}{(1+z^{-1}s_j)^{8+n_1}}
\\ &\times\prod_{1\leq j<k\leq 2\beta}\left|{s_j}-{s_k}\right|^{4/\beta}.
\end{align*}The changing of variables $s_j\mapsto s_j^{-1}$ for $l<j\leq2\beta$ shows that\begin{align*}F_{n_1,\beta,l}(z)&=\int_{(0,1]^{2\beta}}{ds_1\cdots ds_{2\beta}}\prod_{j=1}^{l}\frac{s_j^{{2}/{\beta}-1}(1-zs_j)^{n_1}}{(1+z^{-1}s_j)^{8+n_1}}\times\\&
\prod_{j=l+1}^{2\beta}\frac{s_j^{-{2}/{\beta}+1}(1-zs_j^{-1})^{n_1}}{(1+z^{-1}s_j^{-1})^{8+n_1}s_j^2}
\prod_{1\leq j<k\leq l}\left|{s_j}-{s_k}\right|^{4/\beta}\prod_{l< j<k\leq 2\beta}\left|s_j^{-1}-s_k^{-1}\right|^{4/\beta}\\&\times\prod_{j=1}^{l}\prod_{k=l+1}^{2\beta}\left|s_j-s_k^{-1}\right|^{4/\beta}
\\&=\int_{(0,1]^{2\beta}}{ds_1\cdots ds_{2\beta}}\prod_{j=1}^{l}\frac{s_j^{{2}/{\beta}-1}(1-zs_j)^{n_1}}{(1+z^{-1}s_j)^{8+n_1}}
\prod_{j=l+1}^{2\beta}\frac{s_j^{a}(s_j-z)^{n_1}}{(s_j+z^{-1})^{8+n_1}}\\&
\times\prod_{1\leq j<k\leq l}\left|{s_j}-{s_k}\right|^{4/\beta}\prod_{l< j<k\leq 2\beta}\left|s_j-s_k\right|^{4/\beta}\prod_{j=1}^{l}\prod_{k=l+1}^{2\beta}\left|1-s_js_k\right|^{4/\beta},
\end{align*}here, $a=-{2}/{\beta}+1+8-2-4/\beta\cdot(2\beta-1)={2}/{\beta}-1$. For $z=e^{i\theta},\ \theta\in (-\pi/2,\pi/2)$ i.e., $\text{Re} z>0$, and for  $s>0$, we have $ |1+z^{-1}s|^2=|s+z^{-1}|^2=1+s^2+2s\cos\theta>1$ and $|1-zs|=|s-z|$, therefore, we have \begin{align}\label{14}|F_{n_1,\beta,l}(e^{i\theta})|&\leq\int_{(0,1]^{2\beta}}{ds_1\cdots ds_{2\beta}}\prod_{j=1}^{2\beta}\frac{s_j^{{2}/{\beta}-1}|1-e^{i\theta}s_j|^{n_1}}{|1+e^{-i\theta}s_j|^{n_1}}\times\nonumber\\ &
\prod_{1\leq j<k\leq l}\left|{s_j}-{s_k}\right|^{4/\beta}\prod_{l< j<k\leq 2\beta}\left|s_j-s_k\right|^{4/\beta}\nonumber\\&=F_{n_1,\beta,(l)}(\theta)F_{n_1,\beta,(2\beta-l)}(\theta),
\end{align} here, we used $\left|1-s_js_k\right|\leq 1$ and we denote \begin{align*}F_{n_1,\beta,(l)}(\theta):=&\int_{(0,1]^l}{ds_1\cdots ds_{l}}\prod_{j=1}^{l}\frac{s_j^{{2}/{\beta}-1}|1-e^{i\theta}s_j|^{n_1}}{|1+e^{-i\theta}s_j|^{n_1}}
\prod_{1\leq j<k\leq l}\left|{s_j}-{s_k}\right|^{4/\beta}.
\end{align*}As $\dfrac{1-s}{1+s}\leq e^{-2s}$ for $s\in(0,1)$, we have\begin{align*}\frac{|1-e^{i\theta}s|^{n_1}}{|1+e^{-i\theta}s|^{n_1}}
=\left|\frac{1+s^2-2s\cos\theta}{1+s^2+2s\cos\theta}\right|^{n_1/2}\leq e^{-\frac{2sn_1\cos\theta}{1+s^2}},
\end{align*}which implies\begin{align*} F_{n_1,\beta,(l)}(\theta)\leq&\int_{(0,1]^l}{ds_1\cdots ds_{l}}\prod_{j=1}^{l}s_j^{{2}/{\beta}-1}e^{-\frac{2s_jn_1\cos\theta}{1+s_j^2}}
\prod_{1\leq j<k\leq l}\left|{s_j}-{s_k}\right|^{4/\beta}\\ \leq&\int_{(0,1]^l}{ds_1\cdots ds_{l}}\prod_{j=1}^{l}s_j^{{2}/{\beta}-1}e^{-s_jn_1\cos\theta}
\prod_{1\leq j<k\leq l}\left|{s_j}-{s_k}\right|^{4/\beta}.\end{align*}
We denote\begin{align*}&J_{n,\beta}(z):=\int_{(0,+\infty)^n}\prod_{j=1}^{n}t_j^{{2}/{\beta}-1}e^{-zt_j}\prod_{1\leq j<k\leq n}|{t_j}-{t_k}|^{4/\beta}dt_1\cdots dt_{n},
\end{align*}then we have $$J_{n,\beta}(z)=z^{-2n^2/\beta}J_{n,\beta}(1).$$ According to Proposition 4.7.3 in \cite{For}, we have the explicit
evaluation\begin{align}\label{12}&J_{n,\beta}(1)=\prod_{j=1}^{n}\frac{\Gamma(1+2j/\beta)\Gamma(2j/\beta)}{\Gamma(1+2/\beta)}.
\end{align}
By the definition of $J_{n,\beta} $, we first easily have  the upper bound \begin{align}\label{15}F_{n_1,\beta,(l)}(\theta)\leq& J_{l,\beta}(n_1\cos\theta)=(n_1\cos\theta)^{-2l^2/\beta}J_{l,\beta}(1).
\end{align}We change of variables $n_1s_j\mapsto t_j$ to get \begin{align*} &F_{n_1,\beta,(l)}(\theta)\leq n_1^{-2l^2/\beta}\int_{(0,n_1]^l}{dt_1\cdots dt_{l}}\\&\prod_{j=1}^{l}t_j^{{2}/{\beta}-1}e^{-\frac{2t_j\cos\theta}{1+t_j^2n_1^{-2}}}
\prod_{1\leq j<k\leq l}\left|{t_j}-{t_k}\right|^{4/\beta}.\end{align*}By  the dominated convergence theorem, we further have\begin{align*} &\limsup_{n_1\to+\infty}n_1^{2l^2/\beta}F_{n_1,\beta,(l)}(\theta)\leq \int_{(0,+\infty)^l}{dt_1\cdots dt_{l}}\prod_{j=1}^{l}t_j^{{2}/{\beta}-1}e^{-{2t_j\cos\theta}}\\&\times
\prod_{1\leq j<k\leq l}\left|{t_j}-{t_k}\right|^{4/\beta}=J_{l,\beta}(2\cos\theta)=(2\cos\theta)^{-2l^2/\beta}J_{l,\beta}(1).\end{align*}Therefore, we have\begin{align} \label{17} &\limsup_{n_1\to+\infty}(2n_1\cos\theta)^{2l^2/\beta}F_{n_1,\beta,(l)}(\theta)\leq J_{l,\beta}(1).\end{align}
\subsection{Proof of Lemma  \ref{lem6}}
Now we are ready to give the proof of Lemma \ref{lem6}.\begin{proof} If $ |e^{ir_1}-e^{ir_2}|\leq n^{-1}$,  then the first inequality holds by \eqref{16} with $n_1=n-4$ and Lemma \ref{lem7}, i.e.,  \begin{align*}&I_{n-4,2}(\beta;r_1,r_2)|e^{ir_1}-e^{ir_2}|^{4\beta}\leq (2\pi)^{-1}C_{\beta,n-4,(4)}n^{-4\beta}\leq CC_{\beta,n}n^{2\beta}.
\end{align*}If $ |e^{ir_1}-e^{ir_2}|\geq n^{-1},$ as $t=e^{i(r_1-r_2)}$, we have $|t-1|=|e^{ir_1}-e^{ir_2}|\geq n^{-1}$ and we can write $t=-e^{2i\theta}$ for some $ \theta\in (-\pi/2,\pi/2),$ then by \eqref{33} and \eqref{16},  we have\begin{align*}&I_{n-4,2}(\beta;r_1,r_2)|e^{ir_1}-e^{ir_2}|^{4\beta}\\=&
\frac{(2\pi)^{n_1}M_{n-4}(2\beta,2\beta,\beta/2)}{S_{2\beta}({2}/{\beta}-1,{2}/{\beta}-1,{2}/{\beta})}
|F_{n-4,\beta}(t)||1-t|^{4\beta}\\=& \frac{(2\pi)^{-1}C_{\beta,n-4,(4)}|1-t|^{4\beta}}{S_{2\beta}({2}/{\beta}-1,{2}/{\beta}-1,{2}/{\beta})}
|F_{n-4,\beta}(t)|.\end{align*}By \eqref{13} and \eqref{14}, we have\begin{align*} |F_{n-4,\beta}(t)|\leq&
\sum_{l=0}^{2\beta}{2\beta\choose l}|F_{n-4,\beta,l}(e^{i\theta})|\\ \leq&
\sum_{l=0}^{2\beta}{2\beta\choose l}F_{n-4,\beta,(l)}({\theta})F_{n-4,\beta,(2\beta-l)}({\theta}),\end{align*}thus we have
\begin{align*}I_{n-4,2}(\beta;r_1,r_2)|e^{ir_1}-e^{ir_2}|^{4\beta}\leq\sum_{l=0}^{2\beta}I_{n-4,2}^{(l)}(\beta;r_1,r_2),\end{align*}
where
\begin{align*}
&I_{n-4,2}^{(l)}(\beta;r_1,r_2)=\frac{(2\pi)^{-1}C_{\beta,n-4,(4)}|1-t|^{4\beta}}{S_{2\beta}({2}/{\beta}-1,{2}/{\beta}-1,{2}/{\beta})}
{2\beta\choose l}F_{n-4,\beta,(l)}({\theta})F_{n-4,\beta,(2\beta-l)}({\theta}).
\end{align*}As $t=-e^{2i\theta},$ we know that $|1-t|=2\cos\theta\geq n^{-1}, $ by \eqref{15} and Lemma \ref{lem7} we have \begin{align*}
I_{n-4,2}^{(l)}(\beta;r_1,r_2)&\leq \frac{CC_{\beta,n}n^{6\beta}(2\cos\theta)^{4\beta}}{S_{2\beta}({2}/{\beta}-1,{2}/{\beta}-1,{2}/{\beta})}{2\beta\choose l}\times\\&(n_1\cos\theta)^{-2l^2/\beta}J_{l,\beta}(1)(n_1\cos\theta)^{-2(2\beta-l)^2/\beta}J_{2\beta-l,\beta}(1)\\ \leq& {CC_{\beta,n}n^{2\beta}(2n\cos\theta)^{4\beta}}(n_1\cos\theta)^{-2l^2/\beta-2(2\beta-l)^2/\beta}\\ \leq& {CC_{\beta,n}n^{2\beta}(n_1\cos\theta)^{4\beta}}(n_1\cos\theta)^{-4(\beta^2+(\beta-l)^2)/\beta}\\ =& CC_{\beta,n}n^{2\beta}(n_1\cos\theta)^{-4(\beta-l)^2/\beta}\leq CC_{\beta,n}n^{2\beta},
\end{align*}here $n_1=n-4,$ $n_1\cos\theta=n_1|1-t|/2\geq n_1/(2n)\geq 1/10, $ and $C$ is a constant depending only on $ \beta,l.$ Summing up, we will conclude the first inequality.

 Now we consider the second inequality regarding the limit superior. If $ |e^{ir_1}-e^{ir_2}|=0$, then
 %\begin{align*}&\limsup_{n\to+\infty}C_{\beta,n}^{-1}n^{-2\beta}I_{n-4,2}(\beta;r_1,r_2)|e^{ir_1}-e^{ir_2}|^{4\beta}
%=0 ,\end{align*}
the result is clearly true. If $ |e^{ir_1}-e^{ir_2}|>0,$ then we can write $t=e^{i(r_1-r_2)}=-e^{2i\theta}$ for some $ \theta\in (-\pi/2,\pi/2),$ and $|1-t|=2\cos\theta.$ Recall that \begin{align*}
&0\leq I_{n-4,2}^{(l)}(\beta;r_1,r_2)\leq  CC_{\beta,n}n^{2\beta}(n_1\cos\theta)^{-4(\beta-l)^2/\beta},\ n_1=n-4,
\end{align*}then for $l\neq\beta$, we have\begin{align*}&\lim_{n\to+\infty}C_{\beta,n}^{-1}n^{-2\beta}I_{n-4,2}^{(l)}(\beta;r_1,r_2)=0,
\end{align*}thus\begin{align}\label{39}&\limsup_{n\to+\infty}C_{\beta,n}^{-1}n^{-2\beta}I_{n-4,2}(\beta;r_1,r_2)|e^{ir_1}-e^{ir_2}|^{4\beta}
\\ \nonumber&\leq \limsup_{n\to+\infty}C_{\beta,n}^{-1}n^{-2\beta}I_{n-4,2}^{(\beta)}(\beta;r_1,r_2).
\end{align}
Notice that\begin{align*}
&I_{n-4,2}^{(\beta)}(\beta;r_1,r_2)=\frac{(2\pi)^{-1}C_{\beta,n-4,(4)}|1-t|^{4\beta}}
{S_{2\beta}({2}/{\beta}-1,{2}/{\beta}-1,{2}/{\beta})}
{2\beta\choose \beta}|F_{n-4,\beta,(\beta)}({\theta})|^2,
\end{align*}we have\begin{align*}
&C_{\beta,n}^{-1}n^{-2\beta}I_{n-4,2}^{(\beta)}(\beta;r_1,r_2)\\=&
\frac{(2\pi)^{-1}C_{\beta,n}^{-1}n^{-2\beta}C_{\beta,n-4,(4)}(2\cos\theta)^{4\beta}}
{S_{2\beta}({2}/{\beta}-1,{2}/{\beta}-1,{2}/{\beta})}{2\beta\choose \beta}|F_{n-4,\beta,(\beta)}({\theta})|^2\\=&
\frac{C_{\beta,n-4,(4)}}{C_{\beta,n}n^{6\beta}}\frac{(2\pi)^{-1}(2n\cos\theta)^{4\beta}}
{S_{2\beta}({2}/{\beta}-1,{2}/{\beta}-1,{2}/{\beta})}{2\beta\choose \beta}|F_{n-4,\beta,(\beta)}({\theta})|^2.
\end{align*}Therefore, by \eqref{17}, Lemma \ref{lem7} and Lemma \ref{lem5} below, we have\begin{align*}&\limsup_{n\to+\infty}C_{\beta,n}^{-1}n^{-2\beta}I_{n-4,2}^{(\beta)}(\beta;r_1,r_2)
=\lim_{n\to+\infty}\frac{C_{\beta,n-4,(4)}}{C_{\beta,n}n^{6\beta}}{2\beta\choose \beta}\times\\&\frac{(2\pi)^{-1}}
{S_{2\beta}({2}/{\beta}-1,{2}/{\beta}-1,{2}/{\beta})}
\left|\limsup_{n\to+\infty}(2n\cos\theta)^{2\beta}F_{n-4,\beta,(\beta)}(\theta)\right|^2\\ \leq&\frac{(2\pi)^{-1}A_{\beta,4}}
{S_{2\beta}({2}/{\beta}-1,{2}/{\beta}-1,{2}/{\beta})}{2\beta\choose \beta}
\left|J_{\beta,\beta}(1)\right|^2=(2\pi)^{-2}A_{\beta}^2.
\end{align*}  This, together with \eqref{39}, will complete the proof of Lemma \ref{lem6} provided Lemma \ref{lem5}.\end{proof}

Now we prove the following identity to complete Lemma  \ref{lem6}.
%Now the evaluation of \eqref{31} expansion of the integrand about the
%endpoints and the following identity.%\begin{align*}&I_{n_1,2}(\beta;r_1,r_2)
%\sim\frac{(2\pi)^{n_1}M_{n_1}(2\beta,2\beta,\beta/2)}{S_{2\beta}({2}/{\beta}-1,{2}/{\beta}-1,{2}/{\beta})}
%{2\beta\choose \beta}|J_{\beta,\beta}(1-t)| ^2.\end{align*}
\begin{lem}\label{lem5}It holds that\begin{align*}&(2\pi)A_{\beta,4}{2\beta\choose \beta}\frac{|J_{\beta,\beta}(1)|^2}{S_{2\beta}({2}/{\beta}-1,{2}/{\beta}-1,{2}/{\beta})}
=A_{\beta}^2.
\end{align*}\end{lem}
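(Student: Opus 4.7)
The identity is purely algebraic: each of $A_\beta$, $A_{\beta,4}$, $J_{\beta,\beta}(1)$, and $S_{2\beta}(2/\beta-1,2/\beta-1,2/\beta)$ admits an explicit closed form (from Lemma \ref{lem7}, equation \eqref{12}, and the Selberg formula \eqref{seg}). The plan is therefore to substitute all four formulas and reduce the identity to a mechanical verification. The powers of $2\pi$ balance at once: $(2\pi)A_{\beta,4}$ carries $(2\pi)^{-2}$ (from $(2\pi)^{1-k}$ with $k=4$), and $A_\beta^2=A_{\beta,2}^2$ also carries $(2\pi)^{-2}$. The powers of $\beta/2$ similarly balance: $A_{\beta,4}$ gives $(\beta/2)^{6\beta}$, $A_\beta^2$ gives $(\beta/2)^{2\beta}$, so the leftover $(\beta/2)^{4\beta}$ must be supplied by the $|J|^2/S$ ratio.

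Next I would expand $|J_{\beta,\beta}(1)|^2$ via \eqref{12} and $S_{2\beta}(\cdots)$ via \eqref{seg}. Several copies of $\Gamma(1+2/\beta)$ cancel between numerator and denominator, and the common factors $\Gamma(2j/\beta)^2$ for $j=1,\dots,\beta$ also cancel, leaving a product of Gammas at arguments $2j/\beta$ for $j$ in a restricted range together with a few integer shifts $\Gamma(m+2j/\beta)$ which I would expand using $\Gamma(m+x)=(x+m-1)\cdots x\,\Gamma(x)$. The surviving expression then has to be matched against the $\Gamma(j\beta/2+1)$ factors coming from $A_\beta$ and $A_{\beta,4}$. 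The bridge between the two families is Gauss's multiplication formula
\begin{align*}
\prod_{k=0}^{m-1}\Gamma\!\left(z+\tfrac{k}{m}\right) = (2\pi)^{(m-1)/2}\,m^{1/2-mz}\,\Gamma(mz),
\end{align*}
applied with $m=\beta$ to rewrite the $\Gamma(\cdot/\beta)$-products as integer-argument Gammas times explicit prefactors in $\beta$ and $2\pi$.

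The main obstacle will be the bookkeeping: one has to group the factors on the $J,S$ side into blocks of size $\beta$ on which the multiplication formula applies cleanly. The binomial coefficient $\binom{2\beta}{\beta}$ on the left is expected to arise naturally when $S_{2\beta}$, which has $2\beta$ integration variables, is split into the sub-products $j\leq\beta$ and $j>\beta$ interchanged by the symmetry $j\mapsto 2\beta+1-j$, and combined with the $(2\pi)^{(\beta-1)/2}$ and explicit powers of $\beta$ produced by each application of Gauss's formula. Once this matching is done the identity collapses to a finite check in integer-argument Gammas; no new ideas are needed beyond the multiplication formula and the Selberg integral itself, but the symbolic computation is long.
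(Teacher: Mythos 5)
There is a genuine gap in the mechanism you propose for the hard part. Your overall framing is right (the identity is verified by substituting the closed forms of $A_{\beta,4}$, $A_\beta$, $J_{\beta,\beta}(1)$ and the Selberg integral, and your bookkeeping of the $2\pi$ and $\beta/2$ powers is correct), but Gauss's multiplication formula is the wrong bridge between the two families of Gamma factors, and the computation as you describe it would not go through. First, the arguments occurring in $J_{\beta,\beta}(1)$ and $S_{2\beta}(2/\beta-1,2/\beta-1,2/\beta)$ are $2j/\beta$, which form a progression of step $2/\beta$, not $1/\beta$; so they do not assemble into blocks $z, z+1/\beta,\dots,z+(\beta-1)/\beta$ (for even $\beta$ they only cover half the residues mod $1$), and forcing the formula to apply would require reflection identities that introduce factors of $\pi$ and $\sin$ you have not accounted for. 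Second, and more decisively, the multiplication formula outputs $\Gamma(\beta z)$ with $z=O(1/\beta)$, i.e.\ Gammas at bounded arguments together with powers of $2\pi$ and $\beta$; it cannot produce the factors $\Gamma(1+k\beta/2)$ that appear in $A_\beta^2$ and $A_{\beta,4}$. Your heuristic for where $\binom{2\beta}{\beta}$ comes from (a symmetry $j\mapsto 2\beta+1-j$ combined with $(2\pi)^{(\beta-1)/2}$ prefactors) is likewise not how the cancellation works: that coefficient simply enters as $\Gamma(1+2\beta)/\Gamma(1+\beta)^2$ and cancels against the $\Gamma(2\beta+1)$ in $A_{\beta,4}$ and the $\Gamma(\beta+1)^2$ in $A_\beta^2$.

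The missing observation is that no multiplication formula is needed because the fractional-argument Gammas cancel \emph{identically} between $|J_{\beta,\beta}(1)|^2$ and $S_{2\beta}$. Writing the Selberg product as $\prod_{j=1}^{2\beta}(\Gamma(2j/\beta))^2/\bigl(\Gamma(1+2/\beta)\prod_{k=1}^{3}(2j/\beta+k)\bigr)$ and splitting $j=1,\dots,2\beta$ into $j$ and $j+\beta$ with $1\le j\le\beta$, the identity $\Gamma(2(j+\beta)/\beta)=\Gamma(2j/\beta+2)=(2j/\beta+1)(2j/\beta)\Gamma(2j/\beta)=(2j/\beta+1)\Gamma(1+2j/\beta)$ shows that $\prod_{j=1}^{2\beta}(\Gamma(2j/\beta))^2/\Gamma(1+2/\beta)^{2\beta}$ equals $|J_{\beta,\beta}(1)|^2$ times the elementary product $\prod_{j=1}^{\beta}(2j/\beta+1)^2$. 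After this exact cancellation, everything that survives is a product of linear factors $\prod_j(j+k\beta/2)$, which telescopes via the recurrence $\Gamma(x+1)=x\Gamma(x)$ into ratios $\Gamma(1+(k+4)\beta/2)/\Gamma(1+k\beta/2)$ and $\Gamma(1+3\beta/2)^2/\Gamma(1+\beta/2)^2$ --- precisely the integer- and half-integer-in-$\beta$ Gammas of $A_{\beta,4}$ and $A_\beta^2$. With that structural cancellation in hand the verification is a few lines; without it, your route does not close.
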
\begin{proof}Notice that the Selberg integral \begin{align*}&S_{2\beta}({2}/{\beta}-1,{2}/{\beta}-1,{2}/{\beta})=\prod_{j=0}^{2\beta-1}\frac{(\Gamma(2(j+1)/{\beta}))^2
\Gamma(1+2(j+1)/{\beta})}{\Gamma(2(2\beta+j+1)/{\beta})\Gamma(1+{2}/{\beta})}\\&=\prod_{j=1}^{2\beta}\frac{(\Gamma(2j/{\beta}))^2
\Gamma(1+2j/{\beta})}{\Gamma(2j/{\beta}+4)\Gamma(1+{2}/{\beta})}=\prod_{j=1}^{2\beta}\frac{(\Gamma(2j/{\beta}))^2
}{\prod_{k=1}^3(2j/{\beta}+k)\Gamma(1+{2}/{\beta})},\end{align*}that
\begin{align*}\prod_{j=1}^{2\beta}\prod_{k=1}^3(2j/{\beta}+k)&=(2/{\beta})^{6\beta}\prod_{k=1}^3\prod_{j=1}^{2\beta}(j+k{\beta}/2)
=(2/{\beta})^{6\beta}\prod_{k=1}^3\frac{\Gamma(1+(k+4){\beta}/2)}{\Gamma(1+k{\beta}/2)},
\end{align*}and that (using \eqref{12})\begin{align*}&\prod_{j=1}^{2\beta}\frac{(\Gamma(2j/{\beta}))^2
}{\Gamma(1+{2}/{\beta})}=\prod_{j=1}^{\beta}\prod_{k=0}^{1}\frac{(\Gamma(2(j+k\beta)/{\beta}))^2
}{\Gamma(1+{2}/{\beta})}\\=&\prod_{j=1}^{\beta}\frac{(\Gamma(2j/{\beta})\Gamma(2j/{\beta}+2))^2
}{(\Gamma(1+{2}/{\beta}))^2}=|J_{\beta,\beta}(1)|^2\prod_{j=1}^{\beta}(2j/{\beta}+1)^2\\
=&|J_{\beta,\beta}(1)|^2(2/{\beta})^{2\beta}\frac{(\Gamma(1+3{\beta}/2))^2}{(\Gamma(1+{\beta}/2))^2},\end{align*}we have\begin{align*}&(2\pi)A_{\beta,4}{2\beta\choose \beta}\frac{|J_{\beta,\beta}(1)|^2}{S_{2\beta}({2}/{\beta}-1,{2}/{\beta}-1,{2}/{\beta})}\\
=&(2\pi)A_{\beta,4}\frac{\Gamma(1+2{\beta})}{(\Gamma(1+{\beta}))^2}(2/{\beta})^{4\beta}
\frac{(\Gamma(1+{\beta}/2))^2}{(\Gamma(1+3{\beta}/2))^2}
\prod_{k=1}^3\frac{\Gamma(1+(k+4){\beta}/2)}{\Gamma(1+k{\beta}/2)},\end{align*}as in Lemma \ref{lem7},
\begin{align*}A_{\beta,4}=\frac{(2\pi)^{-3}(\Gamma(\beta/2+1))^{4}}
{\Gamma(2\beta+1)}(\beta/2)^{6\beta}\prod_{j=1}^{3}
\frac{\Gamma(j\beta/2 +1)}{\Gamma((4+j)\beta/2 +1)},
\end{align*}we have\begin{align*}&(2\pi)A_{\beta,4}{2\beta\choose \beta}\frac{|J_{\beta,\beta}(1)|^2}{S_{2\beta}({2}/{\beta}-1,{2}/{\beta}-1,{2}/{\beta})}\\
=&\frac{(2\pi)^{-2}(\Gamma(\beta/2+1))^{4}}
{\Gamma(2\beta+1)}(\beta/2)^{2\beta}\frac{\Gamma(1+2{\beta})}{(\Gamma(1+{\beta}))^2}
\frac{(\Gamma(1+{\beta}/2))^2}{(\Gamma(1+3{\beta}/2))^2}\\
=&\frac{(2\pi)^{-2}(\Gamma(\beta/2+1))^{6}}
{(\Gamma(1+{\beta}))^2(\Gamma(1+3{\beta}/2))^2}(\beta/2)^{2\beta}=A_{\beta}^2
,\end{align*}this completes the proof.\end{proof}

\section{Proof of Lemma \ref{lem9}}\label{ccccc}Now we give the proof of Lemma \ref{lem9}.
%The proof requires a modification of the following observation: if $\lim\limits_{n\to+\infty}\mathbb{E} X_n=1,\ \limsup\limits_{n\to+\infty}\mathbb{E} X_n^2\leq1,\
%\limsup\limits_{n\to+\infty}\mathbb{E} X_n^k<+\infty, $ then $\lim\limits_{n\to+\infty}\mathbb{E} X_n^k=1. $

\begin{proof}As $C_{\beta,n-2,1}(I)=|I| C_{\beta,n-2,1}/(2\pi)$ (recall \eqref{32}),  by Lemma \ref{lem7}, we have\begin{align}\label{k1}&\lim_{n\to+\infty}\frac{C_{\beta,n-2,1}(I)}{C_{\beta,n}n^{\beta}}
=\frac{|I|}{2\pi}\lim_{n\to+\infty}\frac{C_{\beta,n-2,1}}{C_{\beta,n}n^{\beta}}
=\frac{|I|A_{\beta}}{2\pi},
\end{align}i.e., Lemma \ref{lem9} is true for $k=1.$ Now we assume $|I|>0,$ then for every $ \lambda>0$, we can find $A=(0,a(\lambda))$ such that\begin{align*}&\lambda
=\int_Au^{\beta}du\times\frac{|I|A_{\beta}}{2\pi}.
\end{align*}We denote $$X_n:= \widetilde{\chi}^{(n)}(A\times I),$$ then by Lemma \ref{lem8} with $k=1$ and \eqref{k1}, we have\begin{align*}&
\lim_{n\to+\infty}\mathbb{E} X_n
=\lim_{n\to+\infty}\left(\int_Au^{\beta}du\right)\frac{C_{\beta,n-2,1}(I)}{C_{\beta,n}n^{\beta}}=\lambda;
\end{align*}and with $k=2$ in Lemma \ref{lem8}, we have\begin{align*}&
\liminf_{n\to+\infty}\mathbb{E}( X_n(X_n-1))
=\liminf_{n\to+\infty}\left(\int_Au^{\beta}du\right)^2\frac{C_{\beta,n-4,2}(I)}{C_{\beta,n}n^{2\beta}}.
\end{align*}On the other hand, by H\"older inequality, we have $\mathbb{E}( X_n)^2\geq (\mathbb{E} X_n)^2$ and $\mathbb{E}( X_n(X_n-1))\geq (\mathbb{E} X_n)^2-(\mathbb{E} X_n)$, and thus we have  \begin{align*}&
\liminf_{n\to+\infty}\mathbb{E}( X_n(X_n-1))
\geq \liminf_{n\to+\infty}((\mathbb{E} X_n)^2-(\mathbb{E} X_n))=\lambda^2-\lambda.
\end{align*}Therefore,  we have\begin{align*}&
\liminf_{n\to+\infty}\frac{C_{\beta,n-4,2}(I)}{C_{\beta,n}n^{2\beta}}\geq\left(\int_Au^{\beta}du\right)^{-2}(\lambda^2-\lambda)
=(1-\lambda^{-1})\left(\frac{|I|A_{\beta}}{2\pi}\right)^2.
\end{align*}Letting $ \lambda\to+\infty$, we have\begin{align*}&
\liminf_{n\to+\infty}\frac{C_{\beta,n-4,2}(I)}{C_{\beta,n}n^{2\beta}}\geq\left(\frac{|I|A_{\beta}}{2\pi}\right)^2,
\end{align*}which along with \eqref{19} gives Lemma \ref{lem9} for $k=2.$

 Moreover,  since \begin{align*}&
\mathbb{E}( X_n-\lambda)^2=\mathbb{E}( X_n(X_n-1))-(2\lambda-1)(\mathbb{E} X_n)+\lambda^2,
\end{align*}by Lemma \ref{lem8} and \eqref{19}, we have\begin{align*}
\limsup_{n\to+\infty}\mathbb{E}( X_n(X_n-1))
&=\limsup_{n\to+\infty}\left(\int_Au^{\beta}du\right)^2\frac{C_{\beta,n-4,2}(I)}{C_{\beta,n}n^{2\beta}}
\\&\leq\left(\int_Au^{\beta}du\right)^2\left(\frac{|I|A_{\beta}}{2\pi}\right)^2=\lambda^2,
\end{align*}and thus we have  \begin{align*}
\limsup_{n\to+\infty}\mathbb{E}( X_n-\lambda)^2\leq \lambda^2-(2\lambda-1)\lambda+\lambda^2=\lambda.
\end{align*}Now we denote by $C$ a constant independent
of $n,\lambda$, which may be different from line to line. As $X_n^k\leq \dfrac{2X_n!}{(X_n-k)!}+C$ ($-C$ can be chosen as the lower bound of the polynomial
$2x(x-1)\cdots (x-k+1)-x^k$ for $x\geq0$), by Lemma \ref{lem8} and \eqref{18}, we have\begin{align*}
\limsup_{n\to+\infty}\mathbb{E}( X_n^k)&\leq
2\limsup_{n\to+\infty}\mathbb{E}\left(\frac{X_n!}{(X_n-k)!}\right)+C
\\&\leq2\limsup_{n\to+\infty}\left(\int_Au^{\beta}du\right)^k\frac{C_{\beta,n-2k,k}(I)}{C_{\beta,n}n^{k\beta}}+C
\\&\leq2\left(\int_Au^{\beta}du\right)^k\limsup_{n\to+\infty}\frac{C_{\beta,n-2k,k}}{C_{\beta,n}n^{k\beta}}+C
\\&\leq C\left(\int_Au^{\beta}du\right)^k+C\leq C\lambda^k+C.
\end{align*}By H\"older inequality, we have\begin{align*}
\mathbb{E}\left(\dfrac{( X_n-\lambda)^2X_n!}{(X_n-k+1)!}\right)
&\leq \mathbb{E}\left(( X_n-\lambda)^2X_n^{k-1}\right)\\&\leq\left(\mathbb{E}( X_n-\lambda)^2\right)^{\frac{1}{2}}\left(\mathbb{E}\left(( X_n-\lambda)^2X_n^{2k-2}\right)\right)^{\frac{1}{2}}\\&\leq\left(\mathbb{E}( X_n-\lambda)^2\right)^{\frac{1}{2}}\left(\mathbb{E}\left( X_n^{2k}+\lambda^2X_n^{2k-2}\right)\right)^{\frac{1}{2}},
\end{align*} and thus for any positive integer $k$, we have \begin{align}\label{upperd}\nonumber &
\limsup_{n\to+\infty}\mathbb{E}\left(\dfrac{( X_n-\lambda)^2X_n!}{(X_n-k+1)!}\right)
\\ \leq\nonumber&\left(\limsup_{n\to+\infty}\mathbb{E}( X_n-\lambda)^2\right)^{\frac{1}{2}}\left(\limsup_{n\to+\infty}\mathbb{E}\left( X_n^{2k}+\lambda^2X_n^{2k-2}\right)\right)^{\frac{1}{2}}\\ \leq &\lambda^{\frac{1}{2}}\left((C\lambda^{2k}+C)+\lambda^2(C\lambda^{2k-2}+C)\right)^{\frac{1}{2}}\leq C\lambda^{\frac{1}{2}}(\lambda^{k}+1).
\end{align}Now we can prove the result by induction. Assume $j\geq 2$ and Lemma \ref{lem9} is true for $k=j,j-1$, then by Lemma \ref{lem8}, we further have\begin{align*}
\lim_{n\to+\infty}\mathbb{E}\left(\frac{X_n!}{(X_n-k)!}\right)
&=\lim_{n\to+\infty}\left(\int_Au^{\beta}du\right)^k\frac{C_{\beta,n-2k,k}(I)}{C_{\beta,n}n^{k\beta}}
\\&=\left(\int_Au^{\beta}du\right)^k\left(\frac{|I|A_{\beta}}{2\pi}\right)^k=\lambda^k,\ \ k=j-1,\,j.
\end{align*}
We note that  $( X_n-\lambda)^2=( X_n-k)(X_n-k-1)-(2\lambda-2k-1)(X_n-k)+(\lambda-k)^2 ,$ then for any integer $k\geq 2$, we have the identity\begin{align}\label{iden}&
\dfrac{( X_n-\lambda)^2X_n!}{(X_n-k)!}=\dfrac{X_n!}{(X_n-k-2)!}-\dfrac{(2\lambda-2k-1)X_n!}{(X_n-k-1)!}
+\dfrac{(\lambda-k)^2X_n!}{(X_n-k)!}.
\end{align}

Now by induction, \eqref{upperd}\eqref{iden} and Lemma \ref{lem8}, we have\begin{align*}
C\lambda^{\frac{1}{2}}(\lambda^{j}+1)\geq&\limsup_{n\to+\infty}\mathbb{E}\left(\dfrac{( X_n-\lambda)^2X_n!}{(X_n-j+1)!}\right)
\\ =&\limsup_{n\to+\infty}\mathbb{E}\left(\dfrac{X_n!}{(X_n-j-1)!}-\dfrac{(2\lambda-2j+1)X_n!}{(X_n-j)!}
+\dfrac{(\lambda-j+1)^2X_n!}{(X_n-j+1)!}\right)\\ =&\limsup_{n\to+\infty}\mathbb{E}\left(\dfrac{X_n!}{(X_n-j-1)!}\right)-(2\lambda-2j+1)\lambda^j
+(\lambda-j+1)^2\lambda^{j-1}\\ =&\limsup_{n\to+\infty}\left(\int_Au^{\beta}du\right)^k\frac{C_{\beta,n-2k,k}(I)}{C_{\beta,n}n^{k\beta}}-(\lambda^2-(j-1)^2-\lambda)
\lambda^{j-1},
\end{align*}where we denote $k=j+1$ in the last line.  Therefore, as $\lambda$ large enough, we have\begin{align*}
\limsup_{n\to+\infty}\frac{C_{\beta,n-2k,k}(I)}{C_{\beta,n}n^{k\beta}} &\leq\left(\int_Au^{\beta}du\right)^{-k}(
\lambda^{j+1}+C\lambda^{\frac{1}{2}}(\lambda^{j}+1))\\ &=\left(\frac{|I|A_{\beta}}{2\pi}\right)^k(
1+C\lambda^{-\frac{1}{2}}+C\lambda^{-j-\frac{1}{2}}).
\end{align*}Letting $ \lambda\to+\infty$, we have\begin{align*}&
\limsup_{n\to+\infty}\frac{C_{\beta,n-2k,k}(I)}{C_{\beta,n}n^{k\beta}} \leq\left(\frac{|I|A_{\beta}}{2\pi}\right)^k.
\end{align*}Similarly, as $ \dfrac{( X_n-\lambda)^2X_n!}{(X_n-j+1)!}\geq 0$,  by induction and Lemma \ref{lem8} again,  we have\begin{align*}0\leq&\liminf_{n\to+\infty}\mathbb{E}\left(\dfrac{X_n!}{(X_n-j-1)!}-\dfrac{(2\lambda-2j+1)X_n!}{(X_n-j)!}
+\dfrac{(\lambda-j+1)^2X_n!}{(X_n-j+1)!}\right)\\ =&\liminf_{n\to+\infty}\left(\int_Au^{\beta}du\right)^k\frac{C_{\beta,n-2k,k}(I)}{C_{\beta,n}n^{k\beta}}-(\lambda^2-(j-1)^2-\lambda)
\lambda^{j-1},
\end{align*}where $k=j+1$ again. Therefore, we have\begin{align*}
\liminf_{n\to+\infty}\frac{C_{\beta,n-2k,k}(I)}{C_{\beta,n}n^{k\beta}} &\geq\left(\int_Au^{\beta}du\right)^{-k}(\lambda^2-(j-1)^2-\lambda)
\lambda^{j-1}\\&=\left(\frac{|I|A_{\beta}}{2\pi}\right)^k(
1-\lambda^{-1}-(j-1)^2\lambda^{-2}).
\end{align*}Letting $ \lambda\to+\infty$ again, we have\begin{align*}&
\liminf_{n\to+\infty}\frac{C_{\beta,n-2k,k}(I)}{C_{\beta,n}n^{k\beta}} \geq\left(\frac{|I|A_{\beta}}{2\pi}\right)^k,
\end{align*}thus Lemma \ref{lem9} is also true for $k=j+1.$ This completes the proof.\end{proof}

\end{document}